\def\a{{\alpha}}
\def\g{\gamma}
\def\G{\Gamma}
\def\s{\sigma}
\def\l{\left}
\def\r{\right}
\def\k{\kappa}
\def\w{\omega}
\def\ep{\varepsilon}
\def\vp{\varphi}
\def\th{\theta}
\def\d{\delta}
\def\A{{\mathbb{A}}}
\def\R{{\mathbb{R}}}
\def\P{{\mathbb{P}}}
\def\Q{{\mathbb{Q}}}
\def\C{{\mathbb{C}}}
\def\Z{{\mathbb{Z}}}
\def\H{{\mathbb{H}}}
\def\F{{\mathbb{F}}}
\def\K{{\mathbb{K}}}
\def\SS{{\mathfrak{S}}}
\def\oo{\mathfrak{o}}
\def\O{{\mathcal{O}}}
\def\cA{{\mathcal{A}}}
\def\Sc{{\mathcal{S}}}
\def\m{{\mathbf{m}}}
\def\n{{\mathbf{n}}}
\def\GL{{\mathrm{GL}}}
\def\SL{{\mathrm{SL}}}
\def\GSp{{\mathrm{GSp}}}
\def\PGSp{{\mathrm{PGSp}}}
\def\Sp{{\mathrm{Sp}}}
\def\GSO{{\mathrm{GSO}}}
\def\GO{{\mathrm{GO}}}
\def\SO{{\mathrm{SO}}}
\def\O{{\mathrm{O}}}
\def\i{\infty}
\def\ir{\mathrm{i}}
\def\ol{\overline}
\def\t{\times}
\def\ot{\otimes}
\def\rt{\rtimes}
\def\bs{\backslash}
\def\wt{\widetilde}
\def\wh{\widehat}
\def\In{{\mathrm{Ind}}}
\def\Hom{{\mathrm{Hom}}}
\def\Ch{{\mathrm{Ch}}}
\def\dg{{\mathrm{diag}}}
\def\1{\mathbf{1}}
\def\bQ{\overline{\mathbb{Q}}}
\def\bk{\backslash}
\def\tha{{\tfrac{1}{2}}}
\def\ha{{\frac{1}{2}}}
\def\ds{\displaystyle}
\def\lra{\longrightarrow}
\numberwithin{equation}{section}
\newtheorem{thm}{Theorem}[section]
\theoremstyle{plain}
\newtheorem{Def}[thm]{Definition}
\newtheorem{prop}[thm]{Proposition}
\newtheorem{Cor}[thm]{Corollary}
\newtheorem{lem}[thm]{Lemma}
\theoremstyle{definition}
\newtheorem{Rem}[thm]{Remark}
\newcommand{\FRAC}[2]{\leavevmode\kern.1em \raise.5ex\hbox{\the\scriptfont0 #1}\kern-.1em/\kern-.15em\lower.25ex\hbox{\the\scriptfont0 #2}}
\title{\bf On some Siegel threefold related to the tangent cone of the Fermat quartic surface}
\author{Takeo Okazaki and Takuya Yamauchi}
\dedicatory{Dedicated to Professors van Geemen, Nygaard, and van Straten}
\date{}
\keywords{Siegel threefold, Fermat quartic surface }
\thanks{
The first author would like to express his appreciation to Professor Riccardo Salvati Manni for introducing this problem.
The second author would like to express his appreciation to 
Professor van Geemen for explaining the boundary of the Siegel threefold in question. 
The first author is supported by JSPS Grant-in-Aid for Scientific Research No. 24740017.
The second author is partially supported by JSPS Grant-in-Aid for Scientific Research No.23740027 and JSPS Postdoctoral 
Fellowships for Research Abroad No.378.}
\subjclass[2010]{}
\address{Takeo Okazaki, Department of Mathematics, Faculty of Science, Nara Woman University,
Kitauoyahigashi-machi, Nara 630-8506, Japan. }
\email{okazaki@cc.nara-wu.ac.jp}
\address{Takuya Yamauchi \\
Department of mathematics, Faculty of Education\\
Kagoshima University\\
Korimoto 1-20-6 Kagoshima 890-0065, Japan and\\ 
Department of mathematics \\
 University of Toronto \\
Toronto, Ontario M5S 2E4, CANADA} 
\email{yamauchi@edu.kagoshima-u.ac.jp or tyama@math.toronto.edu}
\begin{document}
\maketitle
\begin{abstract}
Let $Z$ be the quotient of the Siegel modular threefold $\mathcal{A}^{{\rm sa}}(2,4,8)$ which has been studied by 
van Geemen and Nygaard. They gave an implication that some 6-tuple $F_Z$ of theta constants which is in turn known to be a Klingen type Eisenstein series of weight 3 should be related to a holomorphic differential $(2,0)$-form on $Z$. The variety $Z$ is birationally equivalent to the tangent cone of Fermat quartic surface in the title. 

In this paper we first compute the L-function of two smooth resolutions of $Z$. One of these, denoted by $W$, is a kind of Igusa compactification 
such that the boundary $\partial W$ is a strictly normal crossing divisor. The main part of the L-function is described by some elliptic newform $g$ of 
weight 3. Then we construct an automorphic representation $\Pi$ of ${\rm GSp}_2(\A)$ related to $g$ and an explicit vector $E_Z$ sits inside $\Pi$ which 
creates a vector valued (non-cuspidal) Siegel modular form of weight $(3,1)$ so that $F_Z$ coincides with $E_Z$ in $H^{2,0}(\partial W)$ under 
the Poincar\'e residue map and various identifications of cohomologies. 


\end{abstract}
\section{Introduction}
For a  positive integer $N$, let $X_0(N)$ be the modular curves with respect to the congruence subgroup $\G^1_0(N)$ (cf. \cite{Shimura}) which 
has a canonical structure as a projective smooth curve over $\Q$. 
Then the L-function of the first $\ell$-adic etale cohomology $H^1_{{\rm et}}(X_0(N)_{\bQ},\Q_\ell)$ can be 
written in terms of automorphic L-functions associated to newforms in $S_2(\G^1_0(N))$ (see Section 7 of loc.cit.). 
Here $S_2(\G^1_0(N))$ is the space of elliptic cusp forms of weight two with respect to $\G^1_0(N)$. 

The modular curve $X_0(N)$ is a typical, basic example of Shimura variety of dimension one. 
So this phenomena makes us believe naively that Shimura varieties can be written in terms of automorphic 
L-functions. However, if we once move to the higher dimensional case, then the situation becomes much more difficult. 
There are many reasons here. 
Firstly we have to classify all automorphic representations in consideration to describe the L-functions of Shimura varieties. 
Secondly if the dimension of a Shimura variety in consideration is greater than one, then we have to study not only the 
cohomology of middle degree, but also it of another degrees except for zero and top. 

With this reasons it is important to possess many examples at hand to understand the cohomology of Shimura varieties. 
The case of Siegel modular threefolds which the authors are interested in seems to be one of most fascinating objects. 

Let $\mathcal{A}(2,4,8)$ be 
the moduli space of abelian surfaces 
with some level structure which has been studied by van Geemen, Nygaard, and van Straten (\cite{GS},\cite{vG-N}). 
It is the quotient space of the Siegel upper half plane $\mathbb{H}_2$ of degree 2  
by the arithmetic subgroup $\G(2,4,8)$ of the symplectic group $\Sp_2(\Z) \subset \GL_4(\Z)$. 
This congruence subgroup $\G(2,4,8)$ is contained in the principal congruence subgroup 
$\Gamma(4):=\{\gamma\in \Sp_2(\Z)\ |\ \gamma\equiv 1_4\ {\rm mod}\ 4 \}$ 
which is neat in the sense of Borel (Section 17 of \cite{borel}) and so is $\G(2,4,8)$. 
It follows from this that $\mathcal{A}(2,4,8)$ is a quasi-projective smooth threefold. 
By \cite{vG-N} we have the projective model ${\mathcal{A}}^{{\rm sa}}(2,4,8)$ so called the Satake compactification of $\mathcal{A}(2,4,8)$ which 
is defined over $\Q$ in $\P^{13}_\Q$ as follows:
$$\begin{array}{rl}
Y^2_0&=Q_0(X_0,X_1,X_2,X_3):=X^2_0+X^2_1+X^2_2+X^2_3\\
Y^2_1&=Q_1(X_0,X_1,X_2,X_3):=X^2_0-X^2_1+X^2_2-X^2_3\\
Y^2_2&=Q_2(X_0,X_1,X_2,X_3):=X^2_0+X^2_1-X^2_2-X^2_3\\
Y^2_3&=Q_3(X_0,X_1,X_2,X_3):=X^2_0-X^2_1-X^2_2+X^2_3\\
Y^2_4&=Q_4(X_0,X_1,X_2,X_3):=2(X_0X_1+X_2X_3)\\
Y^2_5&=Q_5(X_0,X_1,X_2,X_3):=2(X_0X_2+X_1X_3)\\
Y^2_6&=Q_6(X_0,X_1,X_2,X_3):=2(X_0X_3+X_1X_2)\\
Y^2_7&=Q_7(X_0,X_1,X_2,X_3):=2(X_0X_1-X_2X_3)\\
Y^2_8&=Q_8(X_0,X_1,X_2,X_3):=2(X_0X_2-X_1X_3)\\
Y^2_9&=Q_9(X_0,X_1,X_2,X_3):=2(X_0X_3-X_1X_2).
\end{array}
$$
They considered three kinds of quotient varieties of ${\mathcal{A}}^{{\rm sa}}(2,4,8)$ which are denoted by $X,Y,Z$ in loc.cit. 
and computed their Hodge numbers and the L-function of a smooth model of $X$ and $Y$. 
For $X$ (resp. Y) the middle (etale or de Rham) cohomology $H^3$ is related to a holomorphic Saito-Kurokawa lift 
(resp. endoscopic lifts). They also computed Hodge numbers of $Z$, but they 
left to study automorphic forms relate to holomorphic differential forms on $Z$ and any relation to the L-function of $Z$ 
(though they also computed a part of the L-function). Note that there are no holomorphic forms on $X$ and $Y$ 
other than holomorphic 3-forms (since $h^{2,0}=h^{1,0}=0$ in these cases). 
Contrary to $X$ and $Y$, there are no holomorphic 3-forms on $Z$, but $Z$ has a holomorphic 2-form as we will see below. 
This let the authors spur to study the various kinds of second (or fourth) cohomologies of $Z$ explicitly though 
the general results have been already built up by Weissauer \cite{wei1}, \cite{wei-coh} ,\cite{wei2} 
for any Siegel modular threefold with respect to a (principal) congruence subgroup. 

In this paper we first compute the L-function of $Z$ explicitly and secondly construct an explicit holomorphic 2-form on $Z$ 
related to the computation mentioned before.  
To explain the first main result, we need some notation. 
Let $\G_Z$ be a discrete subgroup of $\Sp_2(\Z)$ which is defined by $Z$ (see Remark \ref{Rem:defG_Z}). Then we will see that  
the Siegel threefold $\G_Z\bk\H_2$ is a smooth quasi-projective variety. 
Then the Satake compactification of $Z^\circ:=\G_Z \bk \H_2$, denoted by $Z=(Z^\circ)^{{\rm sa}}$, has the following defining equation in $\P^7_\Q$:
\begin{eqnarray}
Y^2_5=2(X_0X_2+X_1X_3)\\
Y^2_6=2(X_0X_3+X_1X_2)\\
Y^2_8=2(X_0X_2-X_1X_3)\\
Y^2_9=2(X_0X_3-X_1X_2)
\end{eqnarray}
(see lines 12-23 at p.55 of \cite{vG-N} and p.69-70 of loc.cit. for the 
defining equation).
The singular locus of this variety consists of the two lines defined by 
$$L_{i,i+1}:Y_5=Y_6=Y_8=Y_9=X_i=X_{i+1}=0,\ i=0,2.$$
Let $\wt{Z}$ be the resolution of $Z$ obtained by blowing up along $L_{i,i+1},\ i=0,2$. 
The Hodge numbers of $\widetilde{Z}$  is calculated in Proposition 2.14 of \cite{vG-N}:
\begin{equation}\label{hn}
h^{0,0}=h^{3,3}=h^{2,0}=h^{0,2}=1,\ h^{1,0}=h^{0,1}=h^{3,0}=h^{2,1}=h^{1,2}=h^{0,3}=0,\ h^{1,1}=23.
\end{equation}
We denote by $\widetilde{L}_{i,i+1}$ be the proper transform of $L_{i,i+1}$ in this blowing up. 
Unfortunately, $\widetilde{Z}$ is not a kind of Igusa's desingularization (cf. \cite{igusa}) of the Satake compactification of 
$Z^\circ=\G_Z\bk\H_2$ because $\widetilde{Z}\setminus Z^{\rm sm}=\widetilde{L}_{0,1}\coprod \widetilde{L}_{2,3}$ does not make a 
normal crossing divisor where $ Z^{\rm sm}$ is the smooth locus of $Z$.  
Further it is easy to see that $Z^\circ$ does not coincide with $Z^{\rm sm}=Z\setminus L_{0,1}\coprod L_{2,3}$. 

A correct modification $W$ of a resolution of $Z$ such that the boundary components consist of 
a normal crossing divisor and an explicit description of $W\setminus \partial W=Z^\circ$ will be given in Section \ref{description}. This will be needed to understand cohomology of 
$\G_Z\bk\H_2$. As we will see later, the variety $Z^\circ$ is a smooth, geometrically irreducible scheme over $\Z[\frac{1}{2}]$.  
Then we have the following result on the L-function of $Z^\circ$:
\begin{thm}\label{main1}(Theorem \ref{main-theorem2})
Keep the notation above. For a squarefree integer $a\neq 0,1$, let $\chi_a$ be the Dirichlet character defined by 
the quadratic residue symbol $\Big(\ds\frac{a}{\ast}\Big)$.
Let $g$ be the unique newform in $S_3(\G^1_0(16),\chi_{-1})$.
Then  
$$L(s,H^2_{{\rm et}}(Z^\circ_{\bQ},\Q_\ell))=\zeta(s-1)^{8}L(s-1,\chi_{-1})^7
L(s-1,\chi_{2})^2 L(s-1,\chi_{-2})^2 L(s,g).$$
Here $L(s,\chi_\ast)$ is the Dirichlet L-function for $\chi_\ast$ and $\zeta(s)$ is Riemann zeta function. 
In particular, the LHS is independent of any choice of $\ell$. 
\end{thm}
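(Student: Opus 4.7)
The plan is to compute $H^2(Z^\circ)$ using the smooth SNC compactification $W$ constructed in Section \ref{description}. Writing $j : Z^\circ \hookrightarrow W$ for the open inclusion, Deligne's theory of mixed Hodge structures expresses $H^2(Z^\circ)$ in terms of $H^2(W)$ and the cohomology of the strata of $\partial W$ via the weight spectral sequence. The first task is to describe each irreducible component $D_i$ of $\partial W$ (together with its normalization $\widetilde{D}_i$) explicitly from the Igusa/toroidal resolution of the Satake boundary and the extra blowups producing the SNC condition, and to verify that each $\widetilde{D}_i$ is a rational surface (toric surfaces at cusps, $\P^1$-bundles over $\P^1$ coming from the blowups along $L_{0,1}$ and $L_{2,3}$, and so on). This yields $H^1(\widetilde{D}_i) = 0$, which together with a parallel check on pairwise intersections makes $H^2(Z^\circ)$ pure of weight $2$ and realizes it as the quotient of $H^2(W)$ by the subspace spanned by the boundary cycle classes $[D_i]$.

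The central step is to compute $H^2(W)$ as a Galois representation. Since $h^{2,0}(W) = 1$ (derived from the Hodge numbers (\ref{hn}) of $\widetilde{Z}$ and the effect of the further blowups), the transcendental $(2,0) \oplus (0,2)$-piece of $H^2(W)$ is $2$-dimensional, spanned by the Klingen Eisenstein series $F_Z$ of weight $3$ viewed as a holomorphic $2$-form on $W$ together with its complex conjugate. Matching Hecke eigenvalues of $F_Z$ with those of the newform $g \in S_3(\G^1_0(16), \chi_{-1})$ through the Klingen induction identifies the transcendental $\ell$-adic Galois representation with $\rho_g$, giving the factor $L(s, g)$. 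The algebraic $(1,1)$-part of $H^2(W)$ is spanned by: exceptional divisors from resolving the singular lines $L_{0,1}$ and $L_{2,3}$; irreducible components of $\partial W$; and proper transforms of Humbert-type divisors inherited from $\cA^{\mathrm{sa}}(2,4,8)$. Each such cycle is defined over a subfield of $\Q(\sqrt{-1}, \sqrt{2})$, and I would enumerate the Galois orbits to obtain a contribution of the form $\zeta(s-1)^{a_0} L(s-1, \chi_{-1})^{a_1} L(s-1, \chi_2)^{a_2} L(s-1, \chi_{-2})^{a_3}$ to $H^2(W)$. After quotienting by the boundary cycle classes, which are themselves Tate, the exponents drop to the stated $(8, 7, 2, 2)$; since the identification is through Frobenius traces, $\ell$-independence is automatic.

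The main obstacle is the second step: producing a complete basis of algebraic cycles on $W$ and identifying each cycle's field of definition over $\Z[\tfrac{1}{2}]$. The appearance of the three distinct quadratic twists $\chi_{-1}, \chi_2, \chi_{-2}$ reflects a subtle interaction between the level-$(2,4,8)$ covering structure and the birational geometry of the desingularization, and verifying that the Galois orbits produce exactly the exponents $8, 7, 2, 2$ requires careful bookkeeping. A secondary technical point is ensuring purity of weight $2$ for $H^2(Z^\circ)$, i.e., that neither the boundary components nor their pairwise intersections contribute weight-$3$ or weight-$4$ graded pieces.
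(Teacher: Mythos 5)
Your proposal takes a genuinely different route (weight spectral sequence for the SNC pair $(W,\partial W)$, then a direct determination of $H^2(W)$ by cycles plus a transcendental piece), but the two decisive steps are not justified and one of them rests on a false geometric claim. The identification of the $2$-dimensional transcendental part of $H^2(W)$ with $\rho_g$ by ``matching Hecke eigenvalues of $F_Z$ with those of $g$ through the Klingen induction'' has no content as stated: nothing in your setup relates the Hecke eigensystem of the Siegel Eisenstein series $F_Z$ to the Galois action on $H^2_{{\rm et}}(W_{\bQ},\Q_\ell)$, and supplying such a comparison for this quotient is precisely the hard point. The paper never does this; instead it exploits the birational equivalence of $Z$ with the cone ${\rm Cone}_\infty(F)$ over the Fermat quartic surface (Proposition \ref{iso}), so that the full Galois module structure of $H^2_{{\rm et}}(\widetilde{Z}_{\bQ},\Q_\ell)$ is read off from $H^2_{{\rm et}}(F_{\bQ},\Q_\ell)$ by point counting ($|\widetilde{Z}(\F_q)|=(q+1)|F(\F_q)|$, Corollary \ref{resolution}) together with Otsubo's decomposition (\ref{decomp}); the remaining rank-$2$ piece is pinned down as $V_{g,\ell}$ in Proposition \ref{fermat} via $C\simeq X_0(64)$ and $F$ dominated by $C\times C$, an irreducibility argument from the Weil bound, Shimura's level bound for CM forms, Stein's tables, and the Frobenius trace at $p=3$. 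Likewise, the enumeration of algebraic cycles on $W$ with their fields of definition, which you yourself flag as the main obstacle, is never needed: the exponents $(8,7,2,2)$ fall out of the Fermat decomposition (plus van Geemen--Nygaard's fact that the N\'eron--Severi group exhausts $H^{1,1}(\widetilde{Z})$, used in Proposition \ref{main-coh}), after removing only the two classes of $\widetilde{L}_{0,1}$ and $\widetilde{L}_{2,3}$.

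Your purity argument also starts from an incorrect description of $\partial W$. The boundary components are not rational surfaces: two of them are the proper transforms $\widetilde{L}_{0,1},\widetilde{L}_{2,3}$, each isomorphic to the Fermat quartic (a K3 surface, Proposition \ref{exdiv}), and the remaining components lie over Satake boundary curves that include copies of the modular curve $X(8)$ of genus $5$ (see the discussion around Proposition \ref{line2}); hence $H^1(\widetilde{D}^{[1]})\neq 0$, the weight-$3$ graded piece of $H^2(Z^\circ)$ is the kernel of a Gysin map into $H^3(W)$, and $H^3(W)$ is itself nonzero exactly because of those genus-$5$ curves, so purity is not free and would require proving injectivity of that map. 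The paper avoids the spectral sequence on $W$ entirely for this theorem: it computes $H^4_{{\rm et,c}}(Z^\circ)$ by excision from $\widetilde{Z}$, using the vanishing $H^5_c(Z^\circ,\Q)=0$ proved group-theoretically from $\G_Z/[\G_Z,\G_Z]\otimes_\Z\Q=0$ (Proposition \ref{vanishing}), obtains $H^4_{{\rm et,c}}(Z^\circ_{\bQ},\Q_\ell)=H^4_{{\rm et}}(\widetilde{Z}_{\bQ},\Q_\ell)/\Q_\ell(-2)^{\oplus 2}$ (Theorem \ref{coh-deg2}), and then applies Poincar\'e duality, so purity is an output rather than a hypothesis. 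To salvage your route you would have to both correct the boundary geometry (and track the $42$ boundary classes inside the $63$-dimensional $H^2(W)$ Galois-equivariantly) and supply an actual geometric identification of the transcendental representation --- at which point you would in effect be reconstructing the paper's Fermat-surface argument.
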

This main theorem follows from Theorem \ref{coh-deg2} with Proposition \ref{main-coh}. It follows from \cite{saito} 
the $\ell$-independence of the L-function. 
Note that $L(s,H^4_{{\rm et},c}(Z^\circ_{\bQ},\Q_\ell))=L(s-1,H^2_{{\rm et}}(Z^\circ_{\bQ},\Q_\ell))$ 
by Poincar\'e duality.

Since $H^3_!(Z^\circ,\Q)={\rm Im}(H^3_c(Z^\circ,\Q)\lra H^3(Z^\circ,\Q))=
{\rm Im}(H^3(\widetilde{Z},\Q)\lra H^3(Z^\circ,\Q))=0$ by (\ref{hn}) and Theorem 5.6 of \cite{o-s}, Theorem \ref{main1} tells us that 
most interesting part of the cohomology of $Z^\circ$ is $H^i,\ i=2,4$ contrary to the case $X$ and $Y$ 
(note that $H^{2,0}(X)=H^{2,0}(Y)=0$). 
Further it is quite natural to 
predict some relation between holomorphic differential forms on $Z^\circ$ (and also on $\partial W$) 
and our CM modular form $g$.  

To explain next main result we need more notation.  
Let $M^2_3(\G_Z)$ be the space of Siegel modular forms of weight 3 
with respect to $\G_Z$ and $M^2_3(\G_Z)^{KE}$ the subspace of $M^2_3(\G_Z)$ consisting of all Klingen type Eisenstein series. 
Let $M^2_{(3,1)}(\G_Z)$ be the space of vector valued Siegel modular forms of 
weight $\det^1{\rm St}_2\otimes {\rm Sym}^{2}{\rm St}_2$ with respect to $\G_Z$ (cf. \cite{arakawa}). 
Let $\widetilde{D}^{[1]}$ be the disjoint union of all irreducible components of $\partial W$.
As we will explain at Section \ref{diff}, there exist natural maps: 
\begin{equation}\label{eisen}
M^2_3(\G_Z)^{{\rm KE}}\hookrightarrow \ds\bigoplus_{i=0,2} H^{2,0}(\widetilde{L}_{i,i+1},\C)
\stackrel{{\rm rest}}{\hookleftarrow} H^{2,0}(W,\C)\simeq H^{2,0}(Z^\circ,\C) 
\stackrel{\sim}{\longleftarrow} M^2_{(3,1)}(\G_Z)
\end{equation}
where the first map is given by the projection to the component $\ds\bigoplus_{i=0,2} H^{2,0}(\widetilde{L}_{i,i+1},\C)$ of 
the composite of Eichler-Shimura embedding and the Poincar\'e residue map 
$H^3(Z^\circ,\C)\stackrel{{\rm res}[1]}{\lra}H^2(\widetilde{D}^{[1]},\C)=\C^{\oplus 40}\oplus 
\ds\bigoplus_{i=0,2} H^{2}(\widetilde{L}_{i,i+1},\C)$. Note that this map injects into 
$\ds\bigoplus_{i=0,2} H^{2,0}(\widetilde{L}_{i,i+1},\C)$.
The last map of (\ref{eisen}) is given by a natural identification $H^{2,0}(Z^\circ,\C)\simeq H^{2,0}(\widetilde{Z},\C)\simeq H^{2,0}(W,\C)$, 
since the holomorphic $2$-forms are uniquely extend to those on any 
smooth projective model of $Z^\circ$. 
Let $F_Z$ be the 6-tuple of theta constants defined  by (\ref{FZ}). By definition of $\G_Z$, our form $F_Z$ belongs to 
$M^2_3(\G_Z)$. 
Let $\widetilde{\G}_Z$ be the adelization of $\G_Z$ in $\GSp_2(\widehat{\Z})$ so that 
$\widetilde{\G}_Z\cap \Sp_2(\Q)=\G_Z$ which is introduced in Section \ref{construction}. 

From (\ref{eisen}) one can expect that there exists a vector valued Siegel modular form in $M^2_{(3,1)}(\G_Z)$ 
corresponding to $F_Z$ under (\ref{eisen})
(note that ${\rm dim}_\C M^2_{(3,1)}(\G_Z)={\rm dim}_\C H^{2,0}(Z,\C)=
{\rm dim}_\C H^{2,0}(\widetilde{Z},\C)=1$ by (\ref{hn})). 
Then we have the followings: 
\begin{thm}\label{main2}Keep the notation above. Let $g$ be the unique CM newform in $S^1_3(\G^1_0(16),\chi_{-1})$. 
Then 

\medskip
\noindent
$($i$)$ $F_Z$ is a Hecke eigen form with respect to Hecke operators for any $p\neq 2$ with the following Andrianov-Evdokimov's $L$-function (of degree $4$) outside $2$ (see (\ref{AE}) for the definition).

$$L^{(2)}(s,F_Z;{\rm AE})= \prod_{p \neq 2} L_p(s,g)L_p(s-1,g).$$

\medskip
\noindent
$($ii$)$  ${\rm dim}_\C M^2_3(\G_Z)^{KE}=1$ and $F_Z$ is a generator of $M^2_3(\G_Z)^{KE}$, 

\medskip
\noindent
$($iii$)$ there exists a non-cuspidal automorphic representation $\Pi$ of $\GSp_2(\A)$ and a smooth $\widetilde{\G}_Z$-fixed vector $E_Z$ such that 

\medskip
\noindent
$($a$)$ $(E_Z)_\infty$ has the highest weight vector in the minimal $K$-type $(3,1)$ as a representation of $U(2)$, 

\medskip
\noindent
$($b$)$ $E_Z$ gives a generator of $M^2_{(3,1)}(\G_Z)$ and it coincides with $F_Z$ in 
$\ds\bigoplus_{i=0,2} H^{2}(\widetilde{L}_{i,i+1},\C)$,
under the maps (\ref{eisen}).  

\medskip
\noindent
$($c$)$ $L^{(2)}(s-1,E_Z;{\rm AE})=L^{(2)}(s,F_Z;{\rm AE})=\ds\prod_{p\neq 2}L_p(s,g)L_p(s+1,g)$.

\end{thm}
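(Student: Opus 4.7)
The plan is to handle the three items in order, with (iii) containing the main content.

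\emph{Part $(i)$.} I would first verify that the 6-tuple $F_Z$ defined by (\ref{FZ}) is (up to scalar) the Klingen-type Eisenstein series of weight 3 attached to $g$, extending the hint in the excerpt by matching Fourier--Jacobi expansions at a 1-dimensional cusp. Granting this identification, the Hecke-eigen property at every $p \neq 2$ is immediate from parabolic induction: letting $\psi$ be the cuspidal automorphic representation of $\GL_2(\A)$ generated by $g$, the induced Klingen series has Satake parameters at a good prime $p$ of the form $\{\a_p,\b_p, p^{-1}\a_p, p^{-1}\b_p\}$ where $\{\a_p,\b_p\}$ are the Satake parameters of $g$ at $p$. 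A direct calculation of the Andrianov--Evdokimov spinor $L$-factor then yields the stated factorization $L_p(s,F_Z;\mathrm{AE}) = L_p(s,g) L_p(s-1,g)$.

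\emph{Part $(ii)$.} For the dimension count, I would parameterize Klingen Eisenstein series of weight 3 invariant under $\G_Z$ by elliptic newforms $f$ of weight 3 whose level and nebentypus are compatible with $\G_Z$ (via the Langlands constant-term map together with multiplicity one for Eisenstein series). The explicit description of $\G_Z$ from Section \ref{description} forces $f \in S_3(\G^1_0(N),\chi_{-1})$ with $N \mid 16$, and the unique such newform is $g$ by standard tables. Combined with Part $(i)$, this gives $\dim_\C M^2_3(\G_Z)^{\mathrm{KE}} = 1$ with generator $F_Z$.

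\emph{Part $(iii)$.} Let $P$ be the Klingen parabolic of $\GSp_2$, and set $\Pi := \In^{\GSp_2(\A)}_{P(\A)}(\psi \bt |\cdot|^{s_0})$ at the evaluation $s_0$ placing the induction at weight 3. For $E_Z$ I would choose, at each finite place, the unique up-to-scalar vector fixed by the local component of $\wt{\G}_Z$, and at infinity the highest weight vector of the $U(2)$-type $(3,1)$, which occurs in $\Pi_\i$ with multiplicity one as a minimal $K$-type. Item $(a)$ is then tautological. For $(b)$, the standard dictionary between smooth vectors of prescribed minimal $K$-type and holomorphic vector-valued Siegel modular forms identifies $E_Z$ with an element of $M^2_{(3,1)}(\G_Z)$, a space which is one-dimensional by (\ref{hn}) and (\ref{eisen}), so $E_Z$ is a generator. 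The coincidence with $F_Z$ in $\ds\bigoplus_{i=0,2} H^{2,0}(\wt{L}_{i,i+1},\C)$ under (\ref{eisen}) I would establish by computing Poincar\'e residues of both along each boundary component $\wt{L}_{i,i+1}$, identifying the latter with (a cover of) a modular curve whose $H^{2,0}$ is spanned by $g$ through Eichler--Shimura, and matching Fourier--Jacobi constant terms. Item $(c)$ is automatic from the fact that both $F_Z$ and $E_Z$ realize $\Pi$ at every good place, so their unramified AE $L$-factors agree up to the conventional shift between scalar and vector-valued weight conventions.

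\emph{Main obstacle.} The most delicate step is the boundary comparison in $(iii)(b)$: one must reconcile two distinct $K$-isotypic realizations of $\Pi_\i$ --- the scalar weight $3$ and the vector-valued weight $(3,1)$ --- under the Poincar\'e residue map, track them through the explicit toroidal model $W$ of Section \ref{description}, and pin down the Eichler--Shimura period of $g$ on each identified boundary modular curve. This requires the fine geometric description of $W$ and $\partial W$ together with careful bookkeeping of normalizations in the induced representation $\Pi$.
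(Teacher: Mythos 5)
Your outline runs into two genuine gaps, both at the point the paper itself identifies as the crux. First, in (i) and (iii) you treat ``the Klingen Eisenstein series of weight $3$ attached to $g$'' and the induced space $\In^{\GSp_2(\A)}_{P(\A)}(\psi\boxtimes|\cdot|^{s_0})$ as automorphic objects you can simply evaluate or pick vectors from. At scalar weight $3$ and vector-valued weight $(3,1)$ the Klingen series diverges, so ``choose at each finite place the unique $\wt{\G}_Z$-fixed vector and at infinity the highest weight vector'' does not by itself produce an automorphic form, nor is existence/uniqueness of such a local fixed vector clear at $p=2$ (where the inducing data involve the supercuspidal $\pi(\mu^2)_2$ and the group $\G_Z$ is only described implicitly through theta transformation behaviour). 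This is exactly why the paper does not argue this way: it constructs $E_Z$ as a Soudry theta lift $\th_2(\vp,f_0)$ from $\GO(2)$ attached to $\mu^2$ on $K=\Q(\sqrt{-1})$, chooses $\vp_2$ explicitly via the product of odd theta constants $\th_{(1,0,1,0)}\th_{(1,0,1,1)}$ so that the theta kernel is $\G_Z$-fixed (Remark \ref{Rem:defG_Z}), and proves non-vanishing and the boundary coincidence (iii)(b) in one stroke by computing $\Phi(E_Z)=\th_1(\mathrm{Pr}(\vp),f_0)$ classically and matching it with $\Phi(F_Z|[g_0])(\tau_1)=\th^2_{(0,0)}\th^2_{(0,1)}\th^2_{(1,0)}(\tau_1)=g(\tau_1/4)$. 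Your residue-matching plan for (b) presupposes an $E_Z$ with computable constant terms, i.e.\ it presupposes the construction you have not supplied.

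Second, your $\Pi$ is the full Klingen induction, which is reducible at every place; the theorem needs an irreducible non-cuspidal $\Pi$, and the actual content is to pin down which constituent $\wt{F}_Z$ (resp.\ $E_Z$) generates. The paper does this via the Zharkovskaya-type Proposition \ref{prop:Zhar} (using $\Phi(F_Z|[g_0])\neq 0$ and $F_Z|[e_6]=-F_Z$, plus Frobenius reciprocity) and then the Roberts--Schmidt tables, landing on the non-generic Langlands quotients $L(|*|_p\chi_{-1,p},\cdot)$ at finite places and $w_5$ at infinity; without this identification neither the minimal $K$-type claim (a) nor the spinor/AE factors in (i) and (c) are justified (your Satake parameters $\{\a_p,\b_p,p^{-1}\a_p,p^{-1}\b_p\}$ also have the shift in the wrong direction for $L_p(s,g)L_p(s-1,g)$). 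Relatedly, your dimension count in (ii) --- parameterizing weight-$3$ Klingen series by newforms of compatible level --- is not how such series exist at this weight: for weight $3$ only CM inputs can contribute (Weissauer's classification, as the paper's final Remark notes), and the paper instead gets $\dim_\C M^2_3(\G_Z)^{KE}\le 1$ geometrically from $h^{2,0}(\widetilde Z)=1$ through the diagram (\ref{eisen}) (Proposition \ref{dim}), with $\ge 1$ coming from the explicit computation that $\Phi(F_Z|[g_0])(4\tau_1)$ is the nonzero cusp form $g$. In short: the representation-theoretic skeleton you propose is compatible with the paper's conclusions, but the steps you label routine (automorphic realization at low weight, $\wt\G_Z$-fixed vectors at $2$, identification of the irreducible constituent) are precisely the theorem's substance and are not filled in.
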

Put $W_\ell=H^2_{{\rm et}}(Z^\circ)\oplus H^4_{{\rm et},c}(Z^\circ)$. Let $W^{{\rm t}}_\ell$ be the 
transcendental part of $W_\ell$.
\begin{Cor}\label{cor-of-main}
The following equality holds:
$$L^{(2)}(s,W^{{\rm t}}_\ell)=L^{(2)}(s-1,E_Z;{\rm AE})=L^{(2)}(s,F_Z;{\rm AE}).$$
\end{Cor}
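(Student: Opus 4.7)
The plan is to read off the transcendental part $W^{\mr{t}}_\ell$ from the factorization in Theorem \ref{main1} and then match it with the AE $L$-functions of $F_Z$ and $E_Z$ computed in Theorem \ref{main2}. By definition $W^{\mr{t}}_\ell$ is the complement of the Galois subrepresentation spanned by algebraic cycle classes; for $H^2$ this is the orthogonal complement of the image of the cycle class map from $\mr{NS}(Z^\circ)\otimes \Q_\ell$ into $H^2_{\mr{et}}(Z^\circ_{\bQ}, \Q_\ell(1))$, and its Hodge realization necessarily contains the $(2,0)+(0,2)$-part, which by (\ref{hn}) is two-dimensional.

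Inspect the factorization
$$L(s,H^2_{\mr{et}}(Z^\circ_{\bQ},\Q_\ell))=\zeta(s-1)^{8}\,L(s-1,\chi_{-1})^{7}\,L(s-1,\chi_{2})^{2}\,L(s-1,\chi_{-2})^{2}\,L(s,g).$$
Each of the first four factors is of the form $L(s-1,\eta)$ for a trivial or quadratic Dirichlet character $\eta$, so the associated Galois summand is a Tate twist $\Q_\ell(-1)\otimes\eta$ of a one-dimensional Artin representation. After base change to the common splitting field $\Q(\sqrt{-1},\sqrt{2})$, these summands collectively give $8+7+2+2=19$ copies of $\Q_\ell(-1)$, each of which should be realized by a divisor class coming from the exceptional components of the two blowups of $\widetilde{L}_{i,i+1}$, the irreducible boundary divisors of $W$ described in Section \ref{description}, and pullbacks of hyperplane classes --- cycles defined over appropriate quadratic subfields of $\Q(\sqrt{-1},\sqrt{2})$. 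These $19$ summands exhaust the algebraic part, so the residual two-dimensional factor $L(s,g)$ constitutes $L(s, H^2_{\mr{et}}(Z^\circ)^{\mr{t}})$; the Hodge-Tate weights $\{0,2\}$ attached to the weight-$3$ CM form $g$ are compatible with $h^{2,0}=h^{0,2}=1$.

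Poincar\'e duality then gives $L(s,H^4_{\mr{et},c}(Z^\circ_{\bQ},\Q_\ell))=L(s-1,H^2_{\mr{et}}(Z^\circ_{\bQ},\Q_\ell))$, so the transcendental piece of $H^4_c$ has $L$-function $L(s-1,g)$ and hence
$$L^{(2)}(s,W^{\mr{t}}_\ell)=L^{(2)}(s,g)\,L^{(2)}(s-1,g),$$
which by Theorem \ref{main2}(i) equals $L^{(2)}(s,F_Z;\mr{AE})$, and by Theorem \ref{main2}(iii)(c) equals $L^{(2)}(s-1,E_Z;\mr{AE})$.

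The main obstacle is the algebraicity verification: one must exhibit explicit algebraic cycles defined over the relevant quadratic fields that account for all $19$ one-dimensional Galois summands, forcing the residual two-dimensional piece to be transcendental. Once the explicit description of $W$ and its boundary components from Section \ref{description} is in hand, this reduces to bookkeeping --- matching the Galois characters appearing in the factorization to permutation actions on the exceptional and boundary components --- but it is the substantive geometric input beyond Theorems \ref{main1} and \ref{main2}.
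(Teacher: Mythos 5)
Your argument is correct and is essentially the route the paper intends: the corollary follows immediately by combining Theorem \ref{main1} (i.e.\ Theorem \ref{main-theorem2}, built from Proposition \ref{main-coh} and Theorem \ref{coh-deg2}), Poincar\'e duality for the $H^4_c$ factor, and Theorem \ref{main2}(i) and (iii)(c). The ``main obstacle'' you flag is in fact already supplied inside the paper: Proposition \ref{main-coh} (resting on Proposition 2.14 of \cite{vG-N}, that ${\rm NS}(\widetilde{Z})_{\bQ}\otimes_{\Z}\Q$ exhausts $H^{1,1}(\widetilde{Z},\Q)$) together with the irreducibility of $V_{g,\ell}$ established in the proof of Proposition \ref{fermat} shows the algebraic part is exactly the complement of $V_{g,\ell}$, so no further explicit cycle bookkeeping over $\Q(\sqrt{-1},\sqrt{2})$ is needed.
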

We should mention what Theorem \ref{main2} insists on. 
Even if we have the equation (\ref{eisen}), we do not know a priori any direct 
relation between $M^2_3(\G_Z)$ and $M^2_{(3,1)}(\G_Z)$ because the map is just comparing 
the elements in question as an elliptic modular form. 
Further the shapes of Andrianov-Evdokimov (or spinor) L-functions of eigenforms of each space are different each other (see p.173, the last of Section 3.1 of \cite{arakawa}). 
Therefore what we have to do is firstly to compute the image $g$ of $F_Z$ under ${\rm res}[1]$ via Eichler-Shimura embedding which is 
nothing but the image under the Siegel $\Phi$-operator. Then next we try to find $E_Z\in M^2_{(3,1)}(\G_Z)$ 
related to $g$ so that it coincides with $F_Z$ under (\ref{eisen}). 
As a result, one has $L^{(2)}(s-1,E_Z;{\rm AE})=L^{(2)}(s,F_Z;{\rm AE})$. 
An interesting point is that  our form $E_Z$ of weight $(3,1)$ contribute 
to the mixed Hodge structure of $H^3(Z^\circ,\C)$ via $F_Z$ of weight $3$ which is an avatar of $E_Z$ in some sense. 
This might provokes us to consider the pure of weight 4 part of the mixed Hodge structure on the middle cohomology of a Siegel threefold 
in terms of the Klingen type Eisenstein series of weight $(3,1)$. 
Oda and Schwermer have already mentioned this kind of observation at the end of p.508, \cite{o-s}.  

However, the construction of $E_Z$ seems not to be easy as was done in \cite{arakawa} because 
the weight of $E_Z$ is small and therefore one will come across a problem on the convergence. 
Thus we need some arguments as in \cite{b&s} which make use of the method of 
the analytic continuation of real analytic Eisenstein series. However even if it is defined as in loc.cit., though 
one should extend the results to the vector valued case, it might be difficult to check the non-vanishing of it 
because the group structure of $\G_Z$ is slightly invisible due to 
the definition. Furthermore, it might be difficult to construct $E_Z$ starting from the classical setting (cf. p.63 of \cite{klingen})
because we have to customize $E_Z$ so that it has the central character $\chi_{-1}$ and belongs to 
$M_{(3,1)}(\G_Z)=M_{(3,1)}(\G(2),\chi_Z)$ simultaneously. 
 
To overcome these difficulties, we will apply the Soudry lift which is the theta lift from $\GO(2)$ to $\GSp(2)$, since the automorphic representation $\pi=\pi_g$ associated to the CM modular form $g$ comes from a grossencharacter on $\mathbb{A}^\times_K,\ K=\Q(\sqrt{-1})$. 
Then we extend the automorphic representation $\mu$ of $\GSO_K \simeq K^\t$ to that of $\GO_K = \GO(2)$.  
By using Soudry lift one can construct the desired irreducible representation $\Pi$ contributes to $M^2_{(3,1)}(\G_Z)\simeq H^{2,0}(Z^\circ,\C)=
H^{2,0}(Z^\circ,\C)\cap{\rm Eis}_{Q}$ where the last part is a part of $H^{2,0}(Z,\C)$ 
with respect to Klingen parabolic subgroup $Q$ (see \cite{Schwermer}). 
A theta lifting is one of powerful tools to create various automorphic forms on $\GSp_2(\A)$. 
However in our setting we have to take the level $\G_Z$ into account.  
The group structure of $\G_Z$ is somewhat invisible by the definition.
Therefore we need some observation of the theta kernel such that the image of the lifting is $\G_Z$-invariant. 
This will be devoted to Section \ref{construction}. We remark that the construction of Klingen type Eisenstein series 
of weight $(3,1)$ from CM elliptic modular forms has been already known for experts (see Section 6 of \cite{o-s} and Section 4 of \cite{wei-coh}). 

The paper is organized as follows. 
In Section 2, we study an algebraic description of $Z^\circ$ and determine the $L$-function of $Z^\circ$. 
Related to the results of Section 2, we discuss about the differential forms on $Z^\circ$ and a vector values Siegel modular form $E_Z$ related to $F_Z$ 
via maps between various cohomologies in Section 3. As we mentioned, we analyze the property of $F_Z$ and construct $E_Z$  in section 4.

\section{Notation}
For a prime number $p$ and a power $q$ of $p$, let $\F_q$ be the finite field with the cardinality $q$. 
For any $a\in \F_q$, we denote by $\Big(\ds\frac{a}{q}\Big)$ the quadratic residue symbol of $a$. 
For a finite set $X$, we denote by $|X|$ its cardinality. 
For a commutative algebra $R$, let $R^\t$ denote the group of units of $R$.
Let $R^1$ denote the group of elements of norm $1$ whenever a norm is once given on $R$. 
Throughout this paper, $\ell$ means any prime number. 

\section{two smooth models of $Z$ and its cohomologies}
In this section we will study two smooth resolutions $\widetilde{Z}$ and $W$ of $Z$ as we mentioned in Section 1 and its arithmetic. 
We will also study the mixed Hodge structure on the cohomologies of $Z$ and its relation to (non-cuspidal) 
Siegel modular forms with respect to the discrete group $\G_Z$ defined by $Z$. 
Throughout this section we always assume a prime $p$ (and hence its power $q$) to be odd. 
We refer to Section 4 for the notation of modular forms in various settings which we need in this paper. 

\subsection{L-function of $\widetilde{Z}$} 
In this subsection we shall compute the L-function of $\widetilde{Z}$. 
To do this we need to take a few steps to reduce the defining equation of $Z$ to more 
convenient one. Most materials here have already obtained in \cite{vG-N}, but we need some modifications. 
We recall the defining equation of $Z$ again and replace the original coordinates $Y_5,Y_6,Y_8,Y_9$ by $Y_3,Y_0,Y_2,Y_1$ 
for simplicity:
\begin{eqnarray}
Y^2_0=2(X_0X_3+X_1X_2)\\
Y^2_1=2(X_0X_3-X_1X_2)\\
Y^2_2=2(X_0X_2-X_1X_3)\\
Y^2_3=2(X_0X_2+X_1X_3).
\end{eqnarray}

Henceforth we consider the all geometric objects or morphisms betweem them as $\Z[\frac{1}{2}]$-schemes or 
$\Z[\frac{1}{2}]$-morphisms and we will freely use the basic facts on etale cohomology (cf. \cite{milne}). 

Let $\P^n$ be the projective space of dimension $n$ with the fixed coordinates $[Z_0:\cdots:Z_n]$ and 
$F$ the quartic Fermat surface defined by $Z^4_0-Z^4_1+Z^4_2-Z^4_3=0$ in $\P^3$. 
Let ${\rm Cone}_\infty(F)$ be the tangent cone of $F$ at infinity in $\P^4$ which is defined by 
$${\rm Cone}_\infty(F):=\{[t:Z_0:Z_1:Z_2:Z_3]\in\P^4\ |\ Z^4_0-Z^4_1+Z^4_2-Z^4_3=0\}.$$
As in Proposition 2.13 of \cite{vG-N}, the birational map $\phi:{\rm Cone}_\infty(F)\lra Z$ is given by 
sending $[t:Z_0:Z_1:Z_2:Z_3]$ to 
$$[2Z_0:2Z_1:2Z_2:2Z_3:\frac{Z^2_0+Z^2_1}{t}:\frac{-Z^2_2+Z^2_3}{t}:\frac{t(Z^2_2+Z^2_3)}{Z^2_0+Z^2_1}:t].$$
Note that the constant $c$ in Proposition 2.13 of \cite{vG-N} should be $2$. 
It is easy to see that the converse is given by 
sending $[Y_0:Y_1:Y_2:Y_3:X_0:X_1:X_2:X_3]$ to 
$[X_3:\frac{Y_0}{2}:\frac{Y_1}{2}:\frac{Y_2}{2}:\frac{Y_3}{2}]$. 
For abbreviation we write the coordinates $[X_0:X_1:X_2:X_3]$ as $\underline{X}$ and it is the same as well 
for $\underline{Y}$ and $\underline{Z}$ 

The following proposition gives a modification of Proposition 2.13 of \cite{vG-N}.  
\begin{prop}\label{iso}Let $U_1=\{[t:\underline{Z}]\in {\rm Cone}_\infty(F)\ |\ t\neq 0\ {\rm and}\ Z^2_0+Z^2_1\neq 0 \}$ and 
$U_2=\{[\underline{Y}:\underline{X}]\in Z\ |\ Y^2_0+Y^2_1\neq 0 \ {\rm and}\ X_3\neq 0 \}$. 
Then the birational map $\phi$ gives an isomorphism $U_1\stackrel{\sim}{\lra} U_2$ as an open, 
geometrically irreducible $\Z[\frac{1}{2}]$-scheme.
\end{prop}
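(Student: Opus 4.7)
The plan is to show that $\phi\colon U_1\to U_2$ is an isomorphism of $\Z[\tha]$-schemes by exhibiting the explicit converse
\[
\psi([\underline{Y}:\underline{X}]) = [X_3:\tfrac{Y_0}{2}:\tfrac{Y_1}{2}:\tfrac{Y_2}{2}:\tfrac{Y_3}{2}]
\]
as a morphism on $U_2$ and checking both compositions are the identity, all by direct polynomial manipulation using only the four defining equations of $Z$ in $\P^7$ and the single equation $Z_0^4-Z_1^4+Z_2^4-Z_3^4=0$ of ${\rm Cone}_\infty(F)$ in $\P^4$, plus invertibility of $2$.

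First I would verify that both maps are well defined and land in the correct open set. The denominators of $\phi$ are $t$ and $Z_0^2+Z_1^2$, which are exactly the quantities $U_1$ demands to be nonzero; $\psi$ involves only the unit $\tha\in\Z[\tha]$, and the condition $X_3\neq 0$ on $U_2$ ensures the output is a well-defined point of $\P^4$. Interpreting the $8$-tuple in the image of $\phi$ in the order $[\underline{Y}:\underline{X}]=[Y_0:Y_1:Y_2:Y_3:X_0:X_1:X_2:X_3]$ (so that $Y_i=2Z_i$ and in particular $X_3=t$), one reads off $Y_0^2+Y_1^2=4(Z_0^2+Z_1^2)\neq 0$ and $X_3=t\neq 0$ on $U_1$, giving $\phi(U_1)\subset U_2$; symmetrically $\psi(U_2)\subset U_1$ because $t=X_3\neq 0$ and $Z_0^2+Z_1^2=(Y_0^2+Y_1^2)/4\neq 0$. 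One must also verify $\phi(U_1)\subset Z$ and $\psi(U_2)\subset {\rm Cone}_\infty(F)$: each of the four defining equations of $Z$ reduces, after substitution, to the Fermat relation on the cone (e.g.\ $Y_0^2=4Z_0^2$ is to be matched with $2(X_0X_3+X_1X_2) = 2(Z_0^2+Z_1^2)+2(Z_3^4-Z_2^4)/(Z_0^2+Z_1^2)$, which collapses via $Z_3^4-Z_2^4=Z_0^4-Z_1^4=(Z_0^2-Z_1^2)(Z_0^2+Z_1^2)$), and conversely the cone equation on the image of $\psi$ follows from the $Z$-equations via $Y_0^4-Y_1^4=16X_0X_1X_2X_3=Y_3^4-Y_2^4$.

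With well-definedness in hand I would compute the two compositions. The composition $\psi\circ\phi$ is immediate from the formulas. For $\phi\circ\psi$ the key simplifications are the identities
\[
Y_0^2+Y_1^2=4X_0X_3,\quad Y_3^2-Y_2^2=4X_1X_3,\quad Y_2^2+Y_3^2=4X_0X_2,
\]
each obtained by a linear combination of two of the four defining equations of $Z$; substituting gives the $X$-coordinates of $\phi(\psi([\underline{Y}:\underline{X}]))$ as literally $(X_0,X_1,X_2,X_3)$, while the $Y$-coordinates are $(Y_0,Y_1,Y_2,Y_3)$ by construction, recovering the original point. For geometric irreducibility: ${\rm Cone}_\infty(F)$ is the projective cone over the Fermat quartic $F$, a geometrically irreducible smooth hypersurface of positive dimension in $\P^3_{\Z[\tha]}$, and the projective cone over a geometrically irreducible variety is again geometrically irreducible; the nonempty open subscheme $U_1\subset{\rm Cone}_\infty(F)$ inherits this property, and the isomorphism $\phi$ transports it to $U_2$.

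The only real obstacle is careful bookkeeping: one must correctly interpret the $8$-tuple output of $\phi$ under the relabeled coordinate convention of $Z$ (in which the original $Y_5,Y_6,Y_8,Y_9$ of \cite{vG-N} have been renamed $Y_3,Y_0,Y_2,Y_1$) and identify the right linear combinations of the $Z$-equations that produce the identities used above. No substantive geometric input beyond the single $F$-equation and the four $Z$-equations enters the proof.
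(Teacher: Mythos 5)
Your proposal is correct and follows essentially the same route as the paper: the paper also works with the explicit map $\phi$ and its stated inverse $[\underline{Y}:\underline{X}]\mapsto[X_3:\tfrac{Y_0}{2}:\tfrac{Y_1}{2}:\tfrac{Y_2}{2}:\tfrac{Y_3}{2}]$, reducing everything to the observation that $t=X_3$ and $Z_0^2+Z_1^2\neq 0\Longleftrightarrow Y_0^2+Y_1^2\neq 0$, which is exactly the content of your identities $Y_0^2+Y_1^2=4X_0X_3$, etc. Your write-up simply makes explicit the coordinate bookkeeping and the verification of the defining equations that the paper leaves implicit by citing the (modified) Proposition 2.13 of van Geemen--Nygaard.
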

\begin{proof}
It suffices to check the equivalence that $Z^2_0+Z^2_1\neq 0\Longleftrightarrow Y^2_0+Y^2_1\neq 0$.
\end{proof} 
For $i=1,2$, we denote by $U^c_1$ (resp. $U^c_2$) the reduced closed subscheme ${\rm Cone}_\infty(F)\setminus U_1$ (resp. 
$Z\setminus U_2$). 
\begin{prop}\label{point1}The following equalities hold:   

\medskip
\noindent
(1) $|U^c_1(\F_q)|=|F(\F_q)|+4q^2+4q^2\Big(\ds\frac{-1}{q}\Big)-4q-6q\Big(\ds\frac{-1}{q}\Big)+1+2\Big(\ds\frac{-1}{q}\Big)$,  

\medskip
\noindent
(2) $|U^c_2(\F_q)|=4q^2-2q+2+(4q^2-6q+2)\Big(\ds\frac{-1}{q}\Big)$, 

\medskip
\noindent
(3) $|{\rm Cone}_\infty(F)(\F_q)|=q|F(\F_q)|+1$.

\end{prop}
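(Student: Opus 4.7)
The plan is to handle (3) first (structurally simplest), then (1) by inclusion-exclusion on the complement, and finally (2) by exploiting an identity on $Z$ that collapses $U^c_2$ to a union of two coordinate hyperplane sections. Throughout I will set $\es = \left(\frac{-1}{q}\right)$ and use the quadratic character $\chi$ on $\F_q$ extended by $\chi(0) = 0$, so that $\#\{Y \in \F_q : Y^2 = c\} = 1 + \chi(c)$ uniformly. For (3), I would decompose $\mathrm{Cone}_\infty(F)$ according to $t = 0$ or $t \ne 0$. The section $\{t = 0\}$ is isomorphic to $F \subset \P^3$ and contributes $|F(\F_q)|$; the complement $\{t \ne 0\} \cong \A^4$ is the affine cone $Z_0^4 - Z_1^4 + Z_2^4 - Z_3^4 = 0$, which contains $(q-1)|F(\F_q)| + 1$ points (each projective point of $F$ lifts to $q - 1$ nonzero affine representatives, and the origin contributes one more). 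Adding the two pieces yields (3).

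For (1), I would write $U^c_1 = V_1 \cup V_2$ with $V_1 = \{t = 0\} \cap \mathrm{Cone}_\infty(F) \cong F$ and $V_2 = \{Z_0^2 + Z_1^2 = 0\} \cap \mathrm{Cone}_\infty(F)$, and combine the counts $|V_1|, |V_2|, |V_1 \cap V_2|$ by inclusion-exclusion. The key algebraic observation is that on $V_2$ the factorization $Z_0^4 - Z_1^4 = (Z_0^2 + Z_1^2)(Z_0^2 - Z_1^2)$ forces the Fermat relation to reduce to $Z_2^4 = Z_3^4$, so $V_2 \subset \P^4$ is cut out by the two equations $Z_0^2 + Z_1^2 = 0$ and $Z_2^4 - Z_3^4 = 0$ with $t$ free. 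The two relevant affine sub-counts will be $\#\{(Z_0, Z_1) : Z_0^2 + Z_1^2 = 0\} = 1 + (q-1)(1 + \es)$ (splitting the conic into two lines when $\es = 1$ and collapsing to the origin when $\es = -1$) and $\#\{(Z_2, Z_3) : Z_2^4 = Z_3^4\} = 2q - 1 + (q-1)(1 + \es)$ (by inclusion-exclusion on the three lines $Z_2 = Z_3$, $Z_2 = -Z_3$, $Z_2^2 + Z_3^2 = 0$, which pairwise intersect only at the origin). Multiplying by $q$ for the free $t$, subtracting $1$ for the zero tuple, and dividing by $q - 1$ yields $|V_2(\F_q)|$; doing the same without the factor $q$ yields $|V_1 \cap V_2(\F_q)|$, since this intersection sits inside $\{t = 0\}$. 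Assembling by inclusion-exclusion produces (1).

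For (2), the crucial simplification is that on $Z$ the sum of the first two defining equations gives $Y_0^2 + Y_1^2 = 4X_0X_3$. Hence $U^c_2 = \{X_0 = 0\} \cup \{X_3 = 0\}$ on $Z$, and inclusion-exclusion reduces the task to counting the three subvarieties $\{X_0 = 0\} \cap Z$, $\{X_3 = 0\} \cap Z$, and $\{X_0 = X_3 = 0\} \cap Z$. For each, I would count affine $\underline{Y}$ over each nonzero affine $\underline{X}'$ via the product $\prod_i (1 + \chi(a'_i(\underline{X}')))$, sum, and divide by $q - 1$. When $\es = 1$, substituting $Y_j = i Z_j$ at the equations carrying a minus sign pairs up the $(Y_0, Y_1)$ and $(Y_2, Y_3)$ contributions into factors of shape $(1 + \chi(u))^2$; when $\es = -1$, simultaneous solvability of $Y^2 = c$ and $Y^2 = -c$ forces $c = 0$, which collapses most of the contribution to the coordinate axes of $\underline{X}'$. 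A short character-sum calculation using $\sum_{X \ne 0} \chi(X) = 0$ then yields explicit closed forms for each of the three subvarieties in each of the two residue cases, and inclusion-exclusion delivers (2).

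The hardest step will be the case analysis in (2). Although one might hope for a formal symmetry swapping $X_0 \leftrightarrow X_3$ to equate $|\{X_0 = 0\} \cap Z|$ with $|\{X_3 = 0\} \cap Z|$, the equations of $Z$ are not invariant under that swap, and the two sizes turn out to agree only when $\es = 1$ (and to differ when $\es = -1$). One must therefore carry out the analysis for each of the two subvarieties and each of the two residue classes $q \equiv 1, 3 \pmod 4$ separately. An alternative that bypasses the case-work would be to combine the isomorphism $U_1 \cong U_2$ of Proposition \ref{iso} (which gives $|U_1(\F_q)| = |U_2(\F_q)|$) with (1) and (3) once $|Z(\F_q)|$ is computed independently, but the latter count appears to require essentially the same character-sum machinery.
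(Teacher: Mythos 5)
Your proposal is correct and follows essentially the same route as the paper: the same decompositions ($U_1^c=\{t=0\}\cup\{Z_0^2+Z_1^2=0\}$ handled by inclusion--exclusion, and $U_2^c$ reduced via the identity $Y_0^2+Y_1^2=4X_0X_3$ to $\{X_0X_3=0\}$), with all counts done by quadratic-character sums, and your affine-cone bookkeeping (multiply by $q$ for the free $t$, subtract the origin, divide by $q-1$) reproduces the paper's intermediate totals, including the discrepancy $|\{X_3=0\}|-|\{X_0=0\}|=(q-1)\big(1-\big(\tfrac{-1}{q}\big)\big)$ that you flag. The only differences are cosmetic: the paper splits $\{X_0X_3=0\}$ as the disjoint union $\{X_0=0\}\sqcup\{X_0\neq 0,\ X_3=0\}$ and evaluates everything uniformly through factors $(1+\chi(c))(1+\chi(-c))$, so no separate case analysis on $q\bmod 4$ is needed.
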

\begin{proof}
We first prove (2). First we observe 
$$|U^c_2(\F_q)|=|\{Y^2_0+Y^2_1=0\}|+|\{X_3=0\}|-
\Big|\left\{
\begin{array}{cc}
Y^2_0+Y^2_1=0\\
X_3=0
\end{array}
\right\}\Big|.$$
Since the last two terms are both equal to each other, one has 
$$|U^c_2(\F_q)|=|\{Y^2_0+Y^2_1=0\}|=|\{X_0X_3=0\}|=
|\{X_0=0\}|+|\{X_0=1,\ X_3=0\}|.$$ 
If $X_0=0$, then the defining equation is as follows:
\[
\begin{array}{rl}
Y^2_0&=2X_1X_2\\
Y^2_1&=-2X_1X_2\\
Y^2_2&=-2X_1X_3\\
Y^2_3&=2X_1X_3.
\end{array}
\]
Assume $X_1=0$, then the number in consideration is nothing but 
it of all $[X_2:X_3]\in \P^1(\F_q)$, hence it is $q+1$. 
If $X_1\neq 0$, then the number in consideration amounts to 
 \[
\begin{array}{rl}
&\ds\sum_{X_2,X_3\in\F_q}\Big(1+\Big(\frac{2X_2}{q}\Big) \Big)\Big(1+\Big(\frac{-2X_2}{q}\Big) \Big)
\Big(1+\Big(\frac{-2X_3}{q}\Big) \Big)\Big(1+\Big(\frac{2X_3}{q}\Big) \Big)\\
\\
=& \Big\{\ds\sum_{X_2\in\F_q}\Big(1+\Big(\frac{2X_2}{q}\Big) \Big)\Big(1+\Big(\frac{-2X_2}{q}\Big) \Big)  \Big\}^2
\\
=& \Big\{\ds\sum_{X_2\in\F_q}\Big(1+\Big(\frac{-2X_2}{q}\Big)+\Big(\frac{2X_2}{q}\Big)+\Big(\frac{-4X^2_2}{q}\Big) \Big)  \Big\}^2
\\
=& \Big\{q+(q-1)\Big(\frac{-1}{q}\Big)  \Big\}^2=2q^2-2q+1+(2q^2-2q)\Big(\frac{-1}{q}\Big).
\end{array}
\]
Summing up, one has $|\{X_0=0\}|=2q^2-q+2+(2q^2-2q)\Big(\ds\frac{-1}{q}\Big).$

Assume that $X_0=1$ and $X_3=0$. Then one has 
 \[
\begin{array}{rl}
|\{X_0=1,\ X_3=0\}|=&\ds\sum_{X_1,X_2\in\F_q}\Big(1+\Big(\frac{2X_1X_2}{q}\Big) \Big)\Big(1+\Big(\frac{-2X_1X_2}{q}\Big) \Big)
\Big(1+\Big(\frac{2X_2}{q}\Big) \Big)^2\\
\\
=&q+\ds\sum_{X_1,X_2\in\F_q\atop 
X_2\neq 0}\Big(1+\Big(\frac{2X_1}{q}\Big) \Big)\Big(1+\Big(\frac{-2X_1}{q}\Big) \Big)
\Big(1+\Big(\frac{2X_2}{q}\Big) \Big)^2
\\
=& q+\ds\sum_{X_1\in\F_q}\Big(1+\Big(\frac{2X_1}{q}\Big) \Big)\Big(1+\Big(\frac{-2X_1}{q}\Big) \Big)
\ds\sum_{X_2\in\F_q\atop 
X_2\neq 0}\Big(1+\Big(\frac{2X_2}{q}\Big) \Big)^2\\
\\
=&q+\Big\{q+(q-1)\Big(\ds\frac{-1}{q}\Big)\Big\}\times 2(q-1)=2q^2-q+(2q^2-4q+2)\Big(\frac{-1}{q}\Big).
\end{array}
\]
This claims (2). 

For (1), one has 
$$|U^c_1(\F_q)|=|\{t=0\}|+|\{Z^2_0+Z^2_1=0\}|-|\{t=0,\ Z^2_0+Z^2_1=0\}|.$$ 
The first term is $|F(\F_q)|$. 
For the second term, if $Z_0=0$, then $Z_1=0$. 
So the number of $\{[t:0:0:Z_2,Z_3]\in {\rm Cone}(F)(\F_q)\}$ is 
$p\Big(3+\Big(\ds\frac{-1}{q}\Big)\Big)+1$. 
Here we use the formula $|\{x\in \F_q\ |\ x^4=1\}|=3+\Big(\ds\frac{-1}{q}\Big)$. 
If $Z_0=1$, then the number of $\{[t:1:Z_1:Z_2:Z_3]\in {\rm Cone}(F)(\F_q)\}$ is 
$q\Big(1+\Big(\ds\frac{-1}{q}\Big)\Big)\Big\{1+(q-1)\Big(3+\Big(\ds\frac{-1}{q}\Big)\Big)\Big\}$. This gives us the first assertion (1). 

The last claim (3) follows from the definition of the tangent cone. 
This completes the proof. 
\end{proof}
\begin{Cor}\label{point2} The equality $|Z(\F_q)|=(q-1)|F(\F_q)|+2q+2$ holds.
\end{Cor}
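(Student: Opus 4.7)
The plan is to derive the corollary by combining the isomorphism of Proposition \ref{iso} with the three counts in Proposition \ref{point1} and exploiting the complementary decompositions
\[
|{\rm Cone}_\infty(F)(\F_q)| = |U_1(\F_q)| + |U^c_1(\F_q)|, \qquad |Z(\F_q)| = |U_2(\F_q)| + |U^c_2(\F_q)|.
\]
Since $\phi$ restricts to a $\Z[\tfrac{1}{2}]$-scheme isomorphism $U_1 \xrightarrow{\sim} U_2$, the point counts $|U_1(\F_q)|$ and $|U_2(\F_q)|$ coincide. Thus the first step is simply to solve
\[
|Z(\F_q)| = |{\rm Cone}_\infty(F)(\F_q)| + |U^c_2(\F_q)| - |U^c_1(\F_q)|,
\]
and then substitute the values given in Proposition \ref{point1}.

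The second step is to compute the difference $|U^c_2(\F_q)| - |U^c_1(\F_q)|$. Here the key observation is that both formulas contain the same contribution $(4q^2 - 6q + 2)\bigl(\tfrac{-1}{q}\bigr)$ from the quadratic residue symbol, so these terms cancel. What remains is
\[
|U^c_2(\F_q)| - |U^c_1(\F_q)| = (4q^2 - 2q + 2) - \bigl(|F(\F_q)| + 4q^2 - 4q + 1\bigr) = -|F(\F_q)| + 2q + 1.
\]
Combining this with $|{\rm Cone}_\infty(F)(\F_q)| = q|F(\F_q)| + 1$ of part (3) yields
\[
|Z(\F_q)| = q|F(\F_q)| + 1 - |F(\F_q)| + 2q + 1 = (q-1)|F(\F_q)| + 2q + 2,
\]
which is the asserted equality.

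There is essentially no obstacle here; the only thing to verify carefully is the cancellation of the symbols $\bigl(\tfrac{-1}{q}\bigr)$ between $|U^c_1(\F_q)|$ and $|U^c_2(\F_q)|$, which is arranged precisely by the choice of the open subscheme $U_2 \subset Z$ in Proposition \ref{iso}. The clean final form reflects the fact that the excised loci $U^c_1$ and $U^c_2$ differ only by ruled components over a common boundary, so that their extra discrepancy can be absorbed into $(q-1)|F(\F_q)|$ plus low-order terms.
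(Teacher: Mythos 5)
Your proof is correct and follows exactly the paper's own argument: write $|Z(\F_q)|=|{\rm Cone}_\infty(F)(\F_q)|+|U^c_2(\F_q)|-|U^c_1(\F_q)|$ using the isomorphism $U_1\simeq U_2$ of Proposition \ref{iso}, then substitute the counts of Proposition \ref{point1}; your arithmetic, including the cancellation of the $\bigl(\tfrac{-1}{q}\bigr)$ terms, checks out.
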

\begin{proof}
Since $|Z(\F_q)|=|{\rm Cone}_\infty(F)(\F_q)|+|U^c_2(\F_q)|-|U^c_1(\F_q)|$, the claim follows from 
Proposition \ref{iso}, \ref{point1}. 
\end{proof}

We next study a resolution of singularities of $Z$. As mentioned in Section 1, 
this variety has singularities along with the two lines defined by 
$$L_{i,i+1}:Y_0=Y_1=Y_2=Y_3=X_i=X_{i+1}=0,\ i=0,2$$
(note that we have changed the numbering of the subscripts of $Y_\ast$).
Let $\pi:\widetilde{Z}\lra Z$ be the resolution of $Z$ obtained by blowing up along $L_{i,i+1},\ i=0,2$. 
Clearly $\widetilde{Z}$ is defined over $\Q$ and even it can be regarded as a smooth $\Z[\frac{1}{2}]$-scheme. 
\begin{prop}\label{exdiv}The notation is same as above. Let $\widetilde{L}_{i,i+1}$ be the proper  
transform of $L_{i,i+1}$ for $i=1,2$. 
Then $\widetilde{L}_{i,i+1}$ is isomorphic to $F$ as a $\Z[\frac{1}{2}]$-scheme and they never intersect each other. 
\end{prop}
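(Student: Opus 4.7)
The plan is to treat the two parts of the claim separately: the disjointness $\widetilde{L}_{0,1}\cap\widetilde{L}_{2,3}=\emptyset$, and the identification $\widetilde{L}_{i,i+1}\cong F$.

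For disjointness I would first check that $L_{0,1}\cap L_{2,3}=\emptyset$ already in $Z$: a common point would have all four $X_i=0$, forcing each $Y_j^2=0$ and hence $Y_j=0$, contradicting membership in $\P^7$. Since $\pi\colon\widetilde{Z}\to Z$ is an isomorphism outside $L_{0,1}\cup L_{2,3}$ and each exceptional divisor $\widetilde{L}_{i,i+1}$ lies over its respective line, the two exceptional divisors must be disjoint.

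For the isomorphism, by the symmetry of the defining equations under $(X_0,X_1)\leftrightarrow(X_2,X_3)$, which swaps $L_{0,1}$ with $L_{2,3}$ while permuting the $Y_j$'s, it suffices to treat $\widetilde{L}_{0,1}$. I would work on the affine chart $X_3=1$ and note that the ideal of $L_{0,1}$ in $\mathcal{O}_Z$ equals $(Y_0,Y_1,Y_2,Y_3)$: solving the four defining equations for $X_0$ and $X_1$ gives $X_0=(Y_2^2+Y_3^2)/4$ and $X_1\cdot X_3=(Y_3^2-Y_2^2)/4$, so $X_0,X_1$ already lie in the square of $(Y_0,Y_1,Y_2,Y_3)$. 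On the blowup chart $Y_0\neq 0$ I introduce $u_j=Y_j/Y_0$ for $j=1,2,3$. From the first two defining equations one then computes $X_0=Y_0^2(1+u_1^2)/4$ and $X_1X_2=Y_0^2(1-u_1^2)/4$, and from the last two $X_1=Y_0^2(u_3^2-u_2^2)/4$ and $X_0X_2=Y_0^2(u_2^2+u_3^2)/4$. Eliminating $X_0$, $X_1$, $X_2$ produces the single relation
\[
u_1^4-u_2^4+u_3^4=1,
\]
identifying the chart $\{Y_0=0\}$ of the exceptional divisor with the affine open $\{Z_1\neq 0\}$ of $F$ via $Z_0=u_1$, $Z_1=1$, $Z_2=u_3$, $Z_3=u_2$.

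Finally I would run the analogous computation on the remaining charts ($Y_j\neq 0$ for $j=1,2,3$, and with $X_2=1$ in place of $X_3=1$) and verify that the resulting affine pieces glue, via the natural transition maps on the $u_j$'s, into the full projective Fermat quartic $F$. Since all divisions involved are by powers of $2$, the identification $\widetilde{L}_{0,1}\cong F$ is defined over $\Z[\frac{1}{2}]$. The main technical obstacle will be to verify that the blowup of $Z$ along the ideal $(Y_0,\ldots,Y_3)$ agrees with the blowup along $L_{0,1}$ used in the statement, and to check that the gluings across the different charts are consistent; once the local model $u_1^4-u_2^4+u_3^4=1$ is secured, the remaining checks are mechanical.
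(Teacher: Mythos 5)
Your computation in the chart $(X_3=1,\ Y_0\neq 0)$ is correct, and in substance it re-derives by hand exactly what the paper outsources to Proposition 2.12-(2) of van Geemen--Nygaard: there $\widetilde{L}_{0,1}$ is exhibited as the surface in $\P^1\times\P^3$, coordinates $([X_2:X_3],[\underline{Z}])$ with $Z_j$ essentially $Y_j/2$, cut out by $(Z_0^2+Z_1^2)X_2=(Z_2^2+Z_3^2)X_3$ and $(Z_3^2-Z_2^2)X_2=(Z_0^2-Z_1^2)X_3$, and the isomorphism with $F$ is read off from the projection to $\P^3$; disjointness is, as in your argument, just $L_{0,1}\cap L_{2,3}=\emptyset$. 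Your equations $(1-u_1^2)=(u_3^2-u_2^2)X_2$ and $(u_2^2+u_3^2)=(1+u_1^2)X_2$ are precisely the affine form of that incidence model, and eliminating $X_2$ does give the Fermat relation. Also, the ``technical obstacle'' you worry about is already settled by your own observation: on the charts $X_2\neq0$, $X_3\neq0$ covering $L_{0,1}$ the ideal $(Y_0,\dots,Y_3)$ \emph{is} the (radical) ideal of $L_{0,1}$, so the two blowups agree there.

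Two genuine corrections, though. First, the symmetry you invoke does not exist over $\Z[\frac{1}{2}]$: under $(X_0,X_1,X_2,X_3)\mapsto(X_2,X_3,X_0,X_1)$ the quadric $2(X_0X_3-X_1X_2)$ goes to its negative, and $-Y_1^2$ is not any $\pm Y_j^2$, so no permutation of the $Y_j$ (even with signs) restores the system; you would need $Y_1\mapsto\sqrt{-1}\,Y_1$. This is harmless only because the direct computation for $L_{2,3}$ (chart $X_1=1$, say, where $X_2=(Y_0^2-Y_1^2)/4$, $X_3=(Y_3^2-Y_2^2)/4$) works verbatim and yields the same relation --- do that instead. (Relatedly, in the chart $X_3=1$ one has $X_0=(Y_0^2+Y_1^2)/4$, while $(Y_2^2+Y_3^2)/4=X_0X_2$; your later blowup formulas are the correct ones.) Second, the deferred gluing is not purely cosmetic: your single chart misses the points of $F$ with $Z_0^2+Z_1^2=0=Z_2^2-Z_3^2$, which lie over $[X_2:X_3]=[1:0]$, so the chart $X_2\neq0$ is indispensable, and the labelling $Z_0=u_1,\ Z_1=1,\dots$ must be made consistent across charts. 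A clean way to organize this within your approach: the identity $Y_0^4-Y_1^4+Y_2^4-Y_3^4=0$ holds on $Z$ (both sides of the defining equations give $16X_0X_1X_2X_3$), so the tautological map $[Y_0:Y_1:Y_2:Y_3]$, which on your chart is $[1:u_1:u_2:u_3]$, is a globally defined morphism from the exceptional divisor to $F$, and your chart computations then show it is an isomorphism --- this is exactly the projection used in the paper's proof.
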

\begin{proof}Note that $L_{0,1}\simeq \P^1$ with coordinates $[X_2:X_3]$. 
By the proof of Proposition 2.12-(2) in \cite{vG-N}, $\widetilde{L}_{0,1}$ is given by 
\[
\begin{array}{c}
(Z^2_0+Z^2_1)X_2=(Z^2_2+Z^2_3)X_3\\
(-Z^2_2+Z^2_3)X_2=(Z^2_0-Z^2_1)X_3
\end{array}
\]
in $\P^1\times \P^3$ with coordinates $([X_2:X_3],[\underline{Z}])$. 
One can see easily that the natural projection to $\P^3$ gives an isomorphism 
$\widetilde{L}_{0,1}\stackrel{\sim}{\lra} F$. 
It is the same for $\widetilde{L}_{2,3}$. 

The last claim follows from that $L_{0,1}\cap L_{2,3}=\emptyset$. 
\end{proof}

\begin{Cor}\label{resolution}The notation is same as above. Then 
$$|\widetilde{Z}(\F_q)|=(q+1)|F(\F_q)|.$$
\end{Cor}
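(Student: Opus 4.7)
The plan is to exploit the blow-up decomposition of $\widetilde{Z}$ together with the point counts already established. Since $\pi\colon \widetilde{Z}\lra Z$ is the blow-up along the disjoint centers $L_{0,1}$ and $L_{2,3}$, the restriction of $\pi$ to the complement of the exceptional locus is an isomorphism, and by Proposition \ref{exdiv} the exceptional fibers $\widetilde{L}_{0,1}$ and $\widetilde{L}_{2,3}$ are disjoint. This gives a partition
\[
\widetilde{Z} \;=\; \pi^{-1}(Z\setminus (L_{0,1}\cup L_{2,3})) \;\sqcup\; \widetilde{L}_{0,1} \;\sqcup\; \widetilde{L}_{2,3},
\]
with the first piece mapping isomorphically onto $Z\setminus (L_{0,1}\cup L_{2,3})$ as $\Z[\tfrac{1}{2}]$-schemes.

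Taking $\F_q$-points I would use: $|Z(\F_q)|=(q-1)|F(\F_q)|+2q+2$ from Corollary \ref{point2}; $|L_{0,1}(\F_q)|=|L_{2,3}(\F_q)|=q+1$ since each $L_{i,i+1}\simeq \P^1$; and $|\widetilde{L}_{i,i+1}(\F_q)|=|F(\F_q)|$ from the isomorphism $\widetilde{L}_{i,i+1}\simeq F$ of Proposition \ref{exdiv}. Substituting these into the partition yields
\[
|\widetilde{Z}(\F_q)| \;=\; \bigl[(q-1)|F(\F_q)|+2q+2\bigr] - 2(q+1) + 2|F(\F_q)| \;=\; (q+1)|F(\F_q)|,
\]
which is the desired identity.

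The only potentially delicate point is justifying that the set-theoretic preimage $\pi^{-1}(L_{i,i+1})$ coincides with the surface called $\widetilde{L}_{i,i+1}$, rather than containing some extra component. But this is essentially built into Proposition \ref{exdiv}: its proof describes $\widetilde{L}_{0,1}$ as the explicit incidence variety in $\P^1\times \P^3$ cut out by the two bilinear relations, and the analogous computation for $L_{2,3}$ is identical by symmetry; the disjointness of the two exceptional pieces comes from $L_{0,1}\cap L_{2,3}=\emptyset$. Once this identification is in hand, the corollary reduces to the bookkeeping above.
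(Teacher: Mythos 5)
Your proof is correct and is essentially the paper's own argument: both replace $|Z(\F_q)|$ from Corollary \ref{point2} in the identity $|\widetilde{Z}(\F_q)|=|Z(\F_q)|+\sum_{i=0,2}|\widetilde{L}_{i,i+1}(\F_q)|-\sum_{i=0,2}|L_{i,i+1}(\F_q)|$, using $\widetilde{L}_{i,i+1}\simeq F$ and $L_{i,i+1}\simeq\P^1$ from Proposition \ref{exdiv}. Your extra remark identifying $\pi^{-1}(L_{i,i+1})$ with $\widetilde{L}_{i,i+1}$ only makes explicit what the paper's counting formula already presupposes.
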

\begin{proof}
Since $|\widetilde{Z}(\F_q)|=|Z(\F_q)|+\ds\sum_{i=0,2}|\widetilde{L}_{i,i+1}(\F_q)|-\sum_{i=0,2}|L_{i,i+1}(\F_q)|$, the claim follows from 
Corollary \ref{point2} and Proposition \ref{exdiv}. 
\end{proof}
\begin{Rem} The Fermat quartic $F$ gives a model over $\Q$ of the Shioda's elliptic modular surface of level 4 (cf. p.71 of \cite{vG-N}). 
From this one has $H^{2,0}(F,\C)\simeq S^1_3(\G^1(4))\simeq S^1_3(\G^1_0(16),\chi_{-1})$. 
\end{Rem}
We now compute the number of $\F_q$-rational points of $F$. 
This seems to be a well-known result for experts, but we give a proof here 
because we need to take keeping the base field (that is $\Q$) into account to our purpose. 
\begin{prop}\label{fermat}
Let $q$ be a prime power which is coprime to 2 and let $g=\ds\sum_{n\ge 1}a_n(g)q^n$ be the CM modular form $g$ in Theorem \ref{main1}. 
Then 
$$|F(\F_q)|=9q+7\Big(\ds\frac{-1}{q}\Big)q+2\Big(\ds\frac{2}{q}\Big)q+
2\Big(\ds\frac{-2}{q}\Big)q+a_q(g).$$
\end{prop}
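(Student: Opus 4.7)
The Fermat quartic $F\subset\P^3_{\Z[\frac{1}{2}]}$ is a smooth projective K3 surface with Betti numbers $b_0=b_4=1$, $b_1=b_3=0$, $b_2=22$, and (by a classical result of Shioda) geometric Picard number $\rho=20$. My plan is to apply the Grothendieck--Lefschetz trace formula to the Frobenius action on $H^\bu_{\mathrm{et}}(F_{\ol{\F}_q},\Q_\ell)$, which gives
$$|F(\F_q)|=1+q^2+\mathrm{tr}\bigl(\mathrm{Frob}_q\mid H^2_{\mathrm{et}}(F_{\ol{\F}_q},\Q_\ell)\bigr),$$
and then to use the cycle class map to split the middle cohomology Galois-equivariantly as
$$H^2_{\mathrm{et}}(F_{\ol{\F}_q},\Q_\ell)\simeq \mathrm{NS}(F_{\bQ})\ot_\Z\Q_\ell\op T,$$
where the algebraic part has rank $20$ and the transcendental summand $T$ has rank $2$. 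The two Frobenius traces can then be computed separately.

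For the transcendental summand $T$, I use the remark cited just before the statement: since $F$ is a $\Q$-model of Shioda's elliptic modular surface of level $4$, one has $H^{2,0}(F,\C)\simeq S_3(\G^1(4))\simeq S_3(\G^1_0(16),\chi_{-1})=\C g$. The corresponding two-dimensional $\ell$-adic Galois representation is then isomorphic to Deligne's representation $\rho_g$ attached to the CM newform $g$, so $\mathrm{tr}(\mathrm{Frob}_q\mid T)=a_q(g)$ for every odd $q$.

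For the algebraic summand, the N\'eron--Severi lattice of $F$ is generated by the $48$ projective lines lying on $F$ (together with a hyperplane section), all of which are defined over the cyclotomic field $\Q(\zeta_8)$. Hence the Galois representation on $\mathrm{NS}(F_{\bQ})\ot\Q$ factors through $\mathrm{Gal}(\Q(\zeta_8)/\Q)\simeq(\Z/8\Z)^\t$, whose four $\Q$-irreducible characters are exactly $\1,\chi_{-1},\chi_2,\chi_{-2}$. I would make the permutation action of $(\Z/8\Z)^\t$ on the $48$ lines explicit, cut down to the rank-$20$ representation by the linear relations from the intersection pairing, and apply character orthogonality to read off the isotypic multiplicities, which come out $(9,7,2,2)$ for $(\1,\chi_{-1},\chi_2,\chi_{-2})$. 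Since geometric Frobenius acts on an algebraic cycle class $[D]\in H^2$ as $q\cdot\sigma(D)$ with $\sigma$ the Galois permutation of classes, this yields
$$\mathrm{tr}(\mathrm{Frob}_q\mid\mathrm{NS}(F_{\bQ})\ot\Q_\ell)=q\bigl(9+7\chi_{-1}(q)+2\chi_2(q)+2\chi_{-2}(q)\bigr).$$

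The main obstacle is this third step: a clean determination of the multiplicities $(9,7,2,2)$ in $\mathrm{NS}(F_{\bQ})\ot\Q$. A cleaner alternative that avoids this lattice bookkeeping is a direct Jacobi-sum evaluation: writing the equation as $Z_0^4+Z_2^4=Z_1^4+Z_3^4$, substituting $\#\{x\in\F_q:x^4=a\}=\sum_{\chi^4=\1}\chi(a)$, and summing over $c\in\F_q$ collapses the affine count into a linear combination of Jacobi sums $J(\chi,\chi')$ for quartic characters of $\F_q^\t$. For trivial or quadratic $(\chi,\chi')$ these evaluate elementarily and contribute the factors $1,\chi_{-1}(q),\chi_2(q),\chi_{-2}(q)$; for genuinely quartic pairs they are values of a Hecke Grossencharacter of $\Q(\sqrt{-1})$, whose sum recovers $a_q(g)$. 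Dividing by $q-1$ to pass from affine to projective points, and combining with the $1+q^2$ contributions from $H^0,H^4$, then gives the formula in the statement.
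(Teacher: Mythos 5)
Your overall skeleton (Lefschetz trace formula plus the splitting of $H^2$ into an algebraic rank-$20$ part and a transcendental rank-$2$ part) is the same one underlying the paper's proof, but at the two decisive points your argument has genuine gaps. The more serious one is the transcendental part: you pass from the complex-analytic statement $H^{2,0}(F,\C)\simeq S_3(\G^1(4))\simeq S_3(\G^1_0(16),\chi_{-1})$ directly to the assertion that the Galois representation on $T$ is Deligne's representation attached to $g$. That inference is not automatic, because the identification of $F$ with Shioda's modular surface is a priori only available after a base change (here to $\Q(\zeta_8)$), and distinct $\Q$-forms of the same $\bQ$-surface have transcendental Galois representations differing by quadratic twists. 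The twists $g\otimes\chi_{-1}$, $g\otimes\chi_{2}$, $g\otimes\chi_{-2}$ are again weight-$3$ CM newforms, and their Frobenius traces disagree with $a_q(g)$ at half of all $q$, so the twist ambiguity changes the formula you are trying to prove. Eliminating it is exactly where the paper spends its effort: it first shows the $2$-dimensional piece $V_\ell$ is irreducible via the Weil bound, then uses the rational map $C\times C\to F$ with $C\simeq X_0(64)$ and $J_0(64)\sim E_{32}^2\times E_{64}$ to see that $V_\ell$ comes from a CM newform $h$ of weight $3$, bounds the level of $h$ by $32$ using Shimura's lemma, and finally pins down $h=g$ (rather than the level-$32$ candidate) by comparing $\mathrm{tr}(\mathrm{Frob}_3\mid V_\ell)=-6$ with the explicit count $|F(\F_3)|=16$. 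Your proposal contains no step playing this role.

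The algebraic part is also not yet a proof: you assert the isotypic multiplicities $(9,7,2,2)$ for $(\1,\chi_{-1},\chi_2,\chi_{-2})$ but explicitly defer the computation and call it the main obstacle. Carrying it out requires not just the permutation character of $\mathrm{Gal}(\Q(\zeta_8)/\Q)$ on the $48$ lines but also the Galois character of the $28$-dimensional space of relations cutting the span down to rank $20$, and the answer is sensitive to the particular $\Q$-form $Z_0^4-Z_1^4+Z_2^4-Z_3^4=0$ (the signs matter). The paper avoids this bookkeeping entirely by citing Otsubo's decomposition of $H^2_{\mathrm{et}}(F_{\bQ},\Q_\ell)$ into $\Q_\ell(-1)^{\oplus 9}\oplus\Q_\ell(-1)(\chi_{-1})^{\oplus 7}\oplus\Q_\ell(-1)(\chi_{2})^{\oplus 2}\oplus\Q_\ell(-1)(\chi_{-2})^{\oplus 2}\oplus V_\ell$. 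Your Jacobi-sum alternative is a legitimate classical route, but as sketched it leaves open precisely the delicate normalizations: the signs in the equation twist the quartic character sums relative to the standard Fermat quartic, and identifying the Grossencharacter contribution with $a_q(g)$ rather than with one of its quadratic twists is again the crux. Either route can be completed, but the steps you postpone or assert without argument are where the real content of the proposition lies.
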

\begin{proof}We may assume $q=p$. 
Let $C$ be the Fermat quartic defined by $x^4_0+x^4_2=x^4_1$ in $\P^2$ with coordinates $[x_0:x_1:x_2]$. 
Consider a generically finite, rational map 
$$\varphi:C\times C\lra F; ([x_0:x_1:x_2],[y_0:y_1:y_2])\mapsto 
[x_0y_2:x_1y_2:x_2y_1:x_2y_0].$$
By Corollary 2.6, p.126 of \cite{otsubo}, one has the decomposition 
\begin{equation}\label{decomp}
H^2_{{\rm et}}(F_{\bQ},\Q_\ell)=\Q_\ell(-1)^{\oplus 9}\oplus
\Q_\ell(-1)(\chi_{-1})^{\oplus 7}
\oplus
\Q_\ell(-1)(\chi_{2})^{\oplus 2}
\oplus
\Q_\ell(-1)(\chi_{-2})^{\oplus 2}\oplus V_{\ell}
\end{equation}
where $V_\ell$ is a 2-dimensional $\ell$-adic geometric representation of $G_{\Q}:={\rm Gal}(\bQ/\Q)$ 
with Hodge-Tate weights $\{2,0\}$ and $\chi_a:G_\Q\lra {\bQ}^\times_\ell$ is the 
quadratic character associated to the quadratic extension $\Q(\sqrt{a})/\Q$ for $a=-1,2,-2$. 
Suppose that $V_\ell$ is reducible then the semi-simple part becomes $V^{{\rm ss}}_\ell=\Q_\ell(2)\varepsilon_1\oplus \Q_\ell \varepsilon_2$ 
for some finite characters $\varepsilon_i:G_\Q\lra {\bQ}^\times_\ell,\ i=1,2$ which are unramified outside 2. 
For any isomorphism $\iota:\bQ_\ell\lra \C$ as a field and any prime $p>2$, we have 
$$|p^2\iota(\varepsilon_1(p))|-|\iota(\varepsilon_2(p))|=p^2-1\le {\rm tr}({\rm Frob}_p|V_\ell)\le 2p$$
by the Weil bound. But this contradicts to $p>2$. 
Hence $V_\ell$ is irreducible.

By Table 8, p.454 of \cite{shimura-mahoro}, $C\simeq X_0(64)$ over $\Q$ and 
$J_0(64):={\rm Jac}(X_0(64))=\stackrel{\Q}{\sim}E^2_{32}\times E_{64}$ where 
$E_{32}:zy^2=x^3-xz^2$ and $E_{64}:zy^2=x^3+xz^2$ are elliptic curves with conductors 32 and 64 respectively. 
The curve $E_{64}$ is the quadratic twist of $E_{32}$ by the quadratic field $\Q(\sqrt{-1})/\Q$.
Then by looking pull-back of $\varphi$, the 2-dimensional irreducible quotient of $H^2_{{\rm et}}(C\times C_{\bQ},\Q_\ell)$ with 
Hodge-Tate weight $\{2,0\}$ is a sub-quotient of $\wedge^2H^1_{{\rm et}}({E_{32}}_{\bQ},\Q_\ell)\simeq \wedge^2 H^1_{{\rm et}}({E_{64}}_{\bQ},\Q_\ell)$. Note that the isomorphism comes from the fact $H^1_{{\rm et}}({E_{32}}_{\bQ},\Q_\ell)
\simeq H^1_{{\rm et}}({E_{64}}_{\bQ},\Q_\ell)(\chi_{-1})$. 
From this, one can see that $V_\ell$ is the $\ell$-adic realization of a CM motive of rank two and it has to correspond to 
a CM elliptic newform $h$ of weight 3. 
By Lemma 3 of \cite{shimura-cm}, the level of $h$ is at most 32. 
Hence $h$ is in $S^1_3(\G^1_0(16),\chi_{-1})$ or $S^1_3(\G^1_0(32),\chi_{-1})$ by Stein's table \cite{stein}. 
Compairing the Fourier coefficients at $p=3$, $h$ should be our $g$ since ${\rm tr}({\rm Frob}_3|V_\ell)=-6$. Here we use the 
formula  
$${\rm tr}({\rm Frob}_p|V_\ell)=|F(\F_p)|-(p^2+1)-\Big\{9p+7\Big(\ds\frac{-1}{p}\Big)p+2\Big(\ds\frac{2}{p}\Big)p+
2\Big(\ds\frac{-2}{p}\Big)p\Big\}$$ and 
$|F(\F_3)|=16$. Hence one has $V_\ell\simeq V_{g,\ell}$.  
\end{proof}
Finally we study the etale cohomology of $\widetilde{Z}$. 
\begin{prop}\label{main-coh} For any $i$, $H^i_{{\rm et}}(\widetilde{Z}_{\bQ},\Q_\ell)$ is a semi-simple $\Q_\ell[G_\Q]$-module and 
it is given as follows: 
\[
H^i_{{\rm et}}(\widetilde{Z}_{\bQ},\Q_\ell)=
\left\{\begin{array}{cc}
\Q_\ell & (i=0) \\
\Q_\ell(-1)\oplus H^2_{{\rm et}}(F_{\bQ},\Q_\ell) & (i=2)  \\
\Q_\ell(-2)\oplus H^4_{{\rm et}}(F_{\bQ},\Q_\ell) & (i=4)  \\
\Q_\ell(-3) & (i=6) \\
0 & (i=1,3,5)  
\end{array}\right.
\]
where 
$$H^2_{{\rm et}}(F_{\bQ},\Q_\ell)=H^4_{{\rm et}}(F_{\bQ},\Q_\ell)(1)=
\Q_\ell(-1)^{\oplus 9}\oplus
\Q_\ell(-1)(\chi_{-1})^{\oplus 7}
\oplus
\Q_\ell(-1)(\chi_{2})^{\oplus 2}
\oplus
\Q_\ell(-1)(\chi_{-2})^{\oplus 2}\oplus V_{g,\ell}
$$
as a $\Q_\ell[G_\Q]$-module. 
\end{prop}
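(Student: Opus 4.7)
The plan is to pin down all $\ell$-adic cohomology of the smooth projective threefold $\wt{Z}$ by combining Deligne's purity with the Hodge numbers (\ref{hn}), the point count in Corollary \ref{resolution}, and the decomposition (\ref{decomp}) of $H^2_{\mr{et}}(F_{\bQ}, \Q_\ell)$ established in the proof of Proposition \ref{fermat}. First, since $\wt{Z}$ is smooth projective over $\Q$, each $H^i_{\mr{et}}(\wt{Z}_{\bQ}, \Q_\ell)$ is pure of weight $i$ (Deligne) and has $\Q_\ell$-dimension equal to the Betti number read off from (\ref{hn}). The vanishing of all odd Hodge numbers in (\ref{hn}) immediately forces $H^i = 0$ for $i \in \{1,3,5\}$, while $H^0 = \Q_\ell$ and $H^6 = \Q_\ell(-3)$ are standard for a smooth, projective, geometrically connected threefold.

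The core of the proof is the joint computation of $H^2$ and $H^4$ via Grothendieck-Lefschetz: for every odd prime power $q$,
\begin{equation*}
|\wt{Z}(\F_q)| = 1 + q^3 + {\rm tr}({\rm Frob}_q \mid H^2) + {\rm tr}({\rm Frob}_q \mid H^4).
\end{equation*}
Substituting $|\wt{Z}(\F_q)| = (q+1)|F(\F_q)|$ from Corollary \ref{resolution} together with the Lefschetz formula for the K3 surface $F$ yields an identity expressing ${\rm tr}({\rm Frob}_q \mid H^2) + {\rm tr}({\rm Frob}_q \mid H^4)$ in terms of $t_F(q) := {\rm tr}({\rm Frob}_q \mid H^2_{\mr{et}}(F_{\bQ},\Q_\ell))$. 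To split this between $H^2$ and $H^4$ I would invoke Poincar\'e duality, which gives $H^4(\wt{Z}) \simeq H^2(\wt{Z})^{\vee}(-3)$, together with the observation that each summand in (\ref{decomp}) is self-dual up to a $(-2)$-twist. Indeed the quadratic characters $\chi_a$ satisfy $\chi_a^2 = 1$, and $V_{g,\ell}$ is self-dual up to $(-2)$-twist since $g$ has CM by $\Q(\sqrt{-1})$ with real nebentypus $\chi_{-1}$. Hence once $H^2(\wt{Z})$ is identified, $H^4(\wt{Z})$ is determined by Tate twist.

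To upgrade equality of Frobenius traces to an isomorphism of $G_\Q$-modules I would use the geometry of Proposition \ref{exdiv}: the two disjoint exceptional divisors $\wt{L}_{0,1}, \wt{L}_{2,3}$ of $\pi: \wt{Z} \to Z$ are each isomorphic to the K3 surface $F$. Either the BBD decomposition theorem applied to $R\pi_*\Q_\ell$ or a direct Gysin/Mayer--Vietoris analysis along the exceptional locus produces a natural Galois-equivariant injection $H^2_{\mr{et}}(F) \hookrightarrow H^2_{\mr{et}}(\wt{Z})$ (pulling back the K3 cohomology from the exceptional divisors) and exhibits the cokernel as a direct sum of Tate classes $\Q_\ell(-1)$ realised by cycle classes of explicit divisors. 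Matching the resulting structure against the trace identity pins down the Galois module, and semi-simplicity follows because the summands in (\ref{decomp}) are pairwise non-isomorphic absolutely irreducible representations, so Chebotarev density on traces forces the claimed decomposition up to isomorphism. The main obstacle is precisely this blow-up analysis: since $Z$ is singular along $L_{i,i+1}$ and the exceptional divisor is the K3 surface $F$ rather than a $\P^1$-bundle, the classical smooth blow-up formula for $\ell$-adic cohomology does not apply verbatim, and one must control $R\pi_*\Q_\ell$ using the explicit local geometry implicit in the proof of Proposition \ref{exdiv}.
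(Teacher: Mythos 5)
Your overall skeleton (odd vanishing from (\ref{hn}), the Lefschetz trace formula combined with Corollary \ref{resolution} and the decomposition (\ref{decomp}), and weight considerations to separate $H^2$ from $H^4$ --- note that Deligne's purity alone already does this, so the appeal to duality and self-duality of the summands is not really needed) reproduces the numerical content of the statement, but the decisive step, passing from equality of Frobenius traces to the asserted \emph{semisimple} $\Q_\ell[G_\Q]$-module decomposition, has a genuine gap. Chebotarev plus Brauer--Nesbitt only identify the semisimplification of $H^2_{{\rm et}}(\widetilde{Z}_{\bQ},\Q_\ell)$; they cannot show that this module is itself semisimple, and your stated reason (``the summands in (\ref{decomp}) are pairwise non-isomorphic absolutely irreducible representations'') is false in any case, since $\Q_\ell(-1)$ and its quadratic twists occur with multiplicities $9,7,2,2$; a non-split extension among Tate-type pieces would have exactly the same traces at every prime. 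Your intended remedy --- a Galois-equivariant injection $H^2_{{\rm et}}(F_{\bQ},\Q_\ell)\hookrightarrow H^2_{{\rm et}}(\widetilde{Z}_{\bQ},\Q_\ell)$ with cokernel spanned by divisor classes, obtained from BBD applied to $R\pi_*\Q_\ell$ or from a Gysin/Mayer--Vietoris analysis --- is precisely the hard part and is only gestured at: the exceptional divisors of Proposition \ref{exdiv} are copies of the elliptic surface $F$ fibred over the singular lines, so neither the smooth blow-up formula nor a naive restriction argument applies, and even the decomposition theorem, which is a statement over $\bQ$ about semisimple perverse summands, does not by itself give semisimplicity of the $G_\Q$-action or a $\Q$-rational splitting; one would still have to produce the splitting and identify the complement by hand.

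The paper short-circuits all of this with one input you did not use: by Proposition 2.14 of \cite{vG-N}, ${\rm NS}(\widetilde{Z})_{\bQ}\otimes\Q$ exhausts $H^{1,1}(\widetilde{Z},\Q)$. Consequently $H^2_{{\rm et}}(\widetilde{Z}_{\bQ},\Q_\ell)$ splits Galois-equivariantly as the image of the cycle class map plus a two-dimensional (transcendental) complement; on the algebraic part $G_\Q$ acts through a finite group, so that part is automatically a semisimple sum of $\Q_\ell(-1)$ twisted by finite-order characters, while the complement is irreducible and is identified with $V_{g,\ell}$ by the trace data coming from Corollary \ref{resolution} and Proposition \ref{fermat} --- the same point counts you use, but now only needed to pin down which characters occur. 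To repair your argument you need such a structural source of the splitting and of semisimplicity (algebraicity of $H^{1,1}$, or an actually executed analysis of $R\pi_*\Q_\ell$ showing the cokernel of your injection is generated by cycle classes); a trace argument alone cannot do it.
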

\begin{proof}
To prove this, it suffices to prove it only for $i=2$. Let ${\rm NS}(\widetilde{Z})_{\bQ}$ be the Neron-Severi group generated by 
cycles defined over $\bQ$. 
Then ${\rm NS}(\widetilde{Z})_{\bQ}\otimes_\Z\Q$ exhausts $H^{1,1}(\widetilde{Z},\Q)$ by Proposition 2.14 of \cite{vG-N}. 
Let ${\rm cl}_\ell:{\rm NS}(\widetilde{Z})_{\bQ}\otimes_\Z \Q_\ell\lra H^2_{{\rm et}}(\widetilde{Z}_{\bQ},\Q_\ell)$ be the 
$\ell$-adic cycle map. 
Then by comparison theorem, Corollary \ref{resolution}, and Proposition \ref{fermat}, 
one has $H^2_{{\rm et}}(\widetilde{Z}_{\bQ},\Q_\ell)={\rm Im}({\rm cl}_\ell)\oplus V_{g,\ell}$ as a $\Q_\ell[G_\Q]$-module, hence giving the claim.
\end{proof}

\subsection{An algebraic description of $Z^\circ$}\label{description}
In previous section we have studied arithmetic of $\widetilde{Z}$. To make a connection to Siegel modular forms on $\G_Z$ more precisely, we have 
to investigate an algebraic description of $Z^\circ=\G_Z\backslash \H_2$ and the difference between $Z^\circ$ and $Z=(Z^\circ)^{{\rm sa}}$. We are starting from the following key Lemma. To explain this we need more notation.
Let $\mathcal{A}(2,4)$ be the Siegel modular threefold associated to $\G(2,4)$ (see Section 1 of \cite{vG-N}). Then its Satake compactification is isomorphic to 
$\P^3$ (Proposition 1.7 of \cite{vG-N}). Then the natural projection $\pi:Z\lra \mathcal{A}(2,4)^{{\rm sa}}\simeq \P^3$ induced by 
the inclusion $\G_Z\subset \G(2,4)$ is given by 
$[\underline{X}:\underline{Y}]\mapsto [\underline{X}]$. 

We introduce the following $30=4\times 6+6$ lines in $\P^3$:
\begin{equation}
\begin{array}{l}
L^{(\pm,\pm)}_1=\{[\pm \sqrt{-1}X_2:\pm \sqrt{-1}X_3:X_2:X_3]\in \P^3\}\stackrel{\Q(\sqrt{-1})}{\simeq} \P^1_{\Q(\sqrt{-1})}\\
L^{(\pm,\pm)}_2=\{[\pm \sqrt{-1}X_1:X_1:\pm \sqrt{-1}X_3:X_3]\in \P^3\}\stackrel{\Q(\sqrt{-1})}{\simeq} \P^1_{\Q(\sqrt{-1})}\\
L^{(\pm,\pm)}_3=\{[\pm \sqrt{-1}X_3:\pm \sqrt{-1}X_2:X_2:X_3]\in \P^3\}\stackrel{\Q(\sqrt{-1})}{\simeq} \P^1_{\Q(\sqrt{-1})}\\
L^{(\pm,\pm)}_4=\{[\pm X_3:\pm X_2:X_2:X_3]\in \P^3\}\stackrel{\Q}{\simeq} \P^1_{\Q}\\
L^{(\pm,\pm)}_5=\{[\pm X_1:X_1:\pm X_3:X_3]\in \P^3\}\stackrel{\Q}{\simeq} \P^1_{\Q}\\
L^{(\pm,\pm)}_6=\{[\pm X_2:\pm X_3:X_2:X_3]\in \P^3\}\stackrel{\Q}{\simeq} \P^1_{\Q}\\
\\
l_{i,j}:X_i=X_j=0\ \mbox{for $0\le i<j\le 3$ }.
\end{array}
\end{equation}
\begin{lem}\label{line1}The boundary $\partial \mathcal{A}(2,4)^{{\rm sa}}=\mathcal{A}(2,4)^{{\rm sa}}\setminus \mathcal{A}(2,4)$ consists of  
the above 30 lines.
\end{lem}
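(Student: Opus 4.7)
The plan is to derive the result by pushing forward the already known boundary of $\mathcal{A}^{\mathrm{sa}}(2,4,8)$ through the natural projection $\pi:\mathcal{A}^{\mathrm{sa}}(2,4,8)\twoheadrightarrow\mathcal{A}(2,4)^{\mathrm{sa}}\simeq\P^3$ given by $[\underline{Y}:\underline{X}]\mapsto[\underline{X}]$. Since $\pi$ restricts to the surjection $\mathcal{A}(2,4,8)\twoheadrightarrow\mathcal{A}(2,4)$, it sends the Satake boundary of the finer level onto the Satake boundary of the coarser level, so the lemma reduces to computing the $\pi$-images of the boundary components of $\mathcal{A}^{\mathrm{sa}}(2,4,8)$.

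First I would invoke the description of $\partial\mathcal{A}^{\mathrm{sa}}(2,4,8)$ from \cite{vG-N}: it is a finite union of projective lines in $\P^{13}$ parametrized by $\G(2,4,8)$-equivalence classes of primitive rational isotropic rank-$2$ sublattices of $\Z^4$ (the $1$-dimensional Klingen cusps), together with the finitely many $0$-dimensional Siegel cusps, which sit in their closures. Second, an orbit count using $\Sp_4(\Z/4\Z)$ acting on Lagrangian planes in $(\Z/4\Z)^4$, combined with the normal inclusion $\G(2,4,8)\triangleleft\G(2,4)$, shows that there are exactly $30$ such $\G(2,4)$-orbits, matching the count in the statement. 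Third, I would explicitly project each boundary line and collect the distinct images: using the defining equations $Y_j^2=Q_j(\underline{X})$, each linear condition on $\underline{Y}$ translates into a quadratic condition on $\underline{X}$, whose square root yields the linear conditions defining the $L_i^{(\pm,\pm)}$ and the $l_{i,j}$.

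To match the list precisely, the three families $L^{(\pm,\pm)}_i$ ($i=1,2,3$) defined over $\Q(\sqrt{-1})$ come from Klingen cusps whose isotropic planes are $\Q(\sqrt{-1})$-rational but Galois-conjugate over $\Q$, with the four sign variants reflecting the ambiguity $Y_j\mapsto\pm Y_j$ on the fibers of $\pi$; the three $\Q$-rational families ($i=4,5,6$) arise similarly from $\Q$-rational isotropic planes; and the six coordinate lines $l_{i,j}$ appear as images of the Siegel cusps, lying in the closures of the other $24$ lines. The main obstacle is the bookkeeping in this matching step: ensuring that no two $\G(2,4,8)$-orbits project to the same $\G(2,4)$-orbit beyond what is forced by the sign symmetries, and that each listed line really occurs. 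The group-theoretic count of $30$ provides the decisive cross-check.
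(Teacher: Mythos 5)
Your overall strategy (push the boundary of $\mathcal{A}^{\rm sa}(2,4,8)$ forward under $\pi$ and count orbits) is not what the paper does --- the paper simply quotes the proof of Proposition 1.7 of van Geemen--Nygaard, where $\partial\mathcal{A}(2,4)^{\rm sa}$ is computed directly inside $\P^3$ as the union of the lines occurring as components of the pairwise intersections $Q_i=Q_j=0$ of the ten quadrics (e.g. $Q_0=Q_1=0$ gives the four lines $X_0=\pm\sqrt{-1}X_2,\ X_1=\pm\sqrt{-1}X_3$, i.e. $L_1^{(\pm,\pm)}$, and $Q_4=Q_7=0$ gives four of the coordinate lines $l_{i,j}$). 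A different route would be acceptable, but yours has concrete errors. First, the parametrization of the cusps is inverted: the $1$-dimensional (Klingen) boundary components of a degree-two Satake compactification correspond to primitive isotropic rank-$1$ sublattices of $\Z^4$, while the rank-$2$ (Lagrangian) isotropic sublattices give the $0$-dimensional (Siegel) cusps. Consequently your step (2), an orbit count of Lagrangian planes in $(\Z/4\Z)^4$, counts the wrong objects; what would have to be counted is the set of $\G(2,4)$-orbits of primitive isotropic vectors (and since $\G(2,4)$ is not a principal congruence subgroup, this is not directly an $\Sp_4(\Z/4\Z)$-orbit computation). The claimed answer $30$ is asserted, not verified, so the ``decisive cross-check'' is missing.

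Second, the matching step contains a dimensional impossibility: the six coordinate lines $l_{i,j}$ cannot be ``images of the Siegel cusps,'' because a $0$-dimensional cusp maps to a point, not a line. In fact every one of the $30$ lines is the closure of a $1$-dimensional boundary component; the $l_{i,j}$ arise exactly like the others (they are components of $\{Q_4=Q_7=0\}$, $\{Q_5=Q_8=0\}$, $\{Q_6=Q_9=0\}$). Likewise, attributing the four sign variants $(\pm,\pm)$ to the ambiguity $Y_j\mapsto\pm Y_j$ on the fibers of $\pi$ is backwards: those sign changes are deck transformations acting on the fibers upstairs and do not produce distinct images in $\P^3$; the four signs come from factoring the quadratic conditions $Q_i=Q_j=0$ into linear ones over $\Q(\sqrt{-1})$ or $\Q$. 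Finally, your argument takes as ``known'' an explicit description of $\partial\mathcal{A}^{\rm sa}(2,4,8)$ as a union of lines in $\P^{13}$; this is nowhere established in the paper and is at least as hard as the statement you are trying to prove, so the reduction is not a genuine simplification. As it stands the proposal does not yield the lemma.
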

\begin{proof}
This is already done at p.53-54, the proof of Proposition 1.7 of \cite{vG-N}. 
By the proof there, one can see that the boundary consists of the lines which obtained by the intersections of all 
pair of ten quadratic equations $Q_i(X_0,X_1,X_2,X_3)=0,\ (0\le i\le 9)$ in $\P^3$ (See Section 1 for $Q_i$).  
\end{proof}
We now determine the boundary $Z\setminus Z^\circ$. 
Let $D^{\pm,\pm}_i=\pi^{-1}(L^{\pm,\pm}_i)^{{\rm red}}$ for $1\le i \le 6$ where the superscript ``red" means the reduced scheme structure and  
$L_{i,j}=\pi^{-1}(l_{i,j})^{{\rm red}}$. For $i=1,3,4,6$, $D^{\pm,\pm}_{i}$ is isomorphic over $\Q(\zeta_8)$ to the modular curve $X(8)=\G^1(8)\backslash \mathbb{H}_1$ of genus 5. 
 For $k=2,5$ and $(i,j)=(0,2),(0,3),(1,2),(1,3)$, each of $D^{\pm,\pm}_{k}$ and $L_{i,j}$ consists of two irreducible 
divisors defined over $\Q(\zeta_8)$ 
which are isomorphic to $\P^1$ and intersect two points. The lines $L_{0,1}$ and $L_{2,3}$ are nothing but those introduced in Section 1. 
Put $D=\ds\bigcup_{k=1}^6 D^{\pm,\pm}_k\cup \bigcup_{1\le i< j\le 4}L_{i,j}$. 
\begin{prop}\label{line2}
The boundary $Z\setminus Z^\circ$ is given by $D$ and it consists of 42 irreducible components. 
Further $D$ can be regarded as a scheme over $\Z[\frac{1}{2}]$.  
\end{prop}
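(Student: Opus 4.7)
The strategy is to reduce the statement to Lemma \ref{line1} via the finite projection $\pi\colon Z\to\mathcal{A}(2,4)^{{\rm sa}}\simeq\P^{3}$ induced by the inclusion $\G_{Z}\subset\G(2,4)$. Because $\pi$ is a finite morphism of Satake compactifications coming from a subgroup inclusion, it sends cusps to cusps and restricts to the natural finite quotient $Z^{\circ}\to\mathcal{A}(2,4)$ on the open part, so one has the set-theoretic identity $Z\setminus Z^{\circ}=\pi^{-1}(\mathcal{A}(2,4)^{{\rm sa}}\setminus\mathcal{A}(2,4))$; with the reduced structure this becomes $\pi^{-1}(\partial\mathcal{A}(2,4)^{{\rm sa}})^{{\rm red}}$. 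By Lemma \ref{line1} the latter equals $\bigcup_{\ell}\pi^{-1}(\ell)^{{\rm red}}$, where $\ell$ ranges over the $30$ explicit lines, so the task reduces to computing $\pi^{-1}(\ell)^{{\rm red}}$ case by case and totalling the irreducible components.

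Each preimage is computed mechanically by substituting the parametrization of $\ell$ into the four defining equations of $Z$ and passing to the reduced subscheme. For $\ell=l_{0,1}$ or $l_{2,3}$ all four right-hand sides vanish identically, so every $Y_{k}=0$ and the preimage is the irreducible line $L_{0,1}$ or $L_{2,3}$, contributing $2$ components. For $\ell=l_{i,j}$ with $(i,j)\in\{(0,2),(0,3),(1,2),(1,3)\}$ two of the four equations force two of the $Y_{k}$'s to vanish, while the other two collapse into a single relation $Y_{a}^{2}+\varepsilon Y_{b}^{2}=0$ with $\varepsilon\in\{1,-1\}$; this splits over $\Q(i)\subset\Q(\zeta_{8})$ into two linear factors, each cutting out a smooth conic $\simeq\P^{1}$ in the residual $\P^{2}$, for $8$ components. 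For $\ell=L^{(\pm,\pm)}_{i}$ with $i\in\{1,3,4,6\}$ the substitution makes exactly one of the four equations vanish identically, forcing one $Y_{k}=0$, and the remaining three quadrics cut out a curve in $\P^{4}$ which, as noted in the setup of the proposition, is geometrically irreducible of genus $5$ and isomorphic over $\Q(\zeta_{8})$ to $X(8)$; this gives $16$ components. For $\ell=L^{(\pm,\pm)}_{k}$ with $k\in\{2,5\}$ two equations vanish identically, forcing two of the $Y_{j}$'s to zero, and the remaining pair reduces to $Y_{a}^{2}=\lambda Y_{b}^{2}$ with $\lambda\in\Q(\zeta_{8})^{\times}$; this splits over $\Q(\zeta_{8})$ into two smooth conics each isomorphic to $\P^{1}$, for $16$ components. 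The grand total is $2+8+16+16=42$.

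For the integrality assertion, every substitution above uses only coefficients in $\Z[\zeta_{8}]$ and every splitting involves extracting square roots of units in $\Z[\zeta_{8},\tfrac{1}{2}]$, so each irreducible component is naturally a closed subscheme of $Z_{\Z[\zeta_{8},1/2]}$; the union $D$ is stable under $\mathrm{Gal}(\Q(\zeta_{8})/\Q)$ and hence descends to a reduced closed subscheme of $Z_{\Z[1/2]}$.

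The main obstacle is case $i\in\{1,3,4,6\}$: one must establish that the intersection of the three quadrics in $\P^{4}$ is indeed geometrically irreducible rather than splitting further over some extension. Rather than running a direct ideal computation, I would handle this through the modular interpretation, since $Z^{\circ}\to\mathcal{A}(2,4)$ sits in the tower dominated by $\mathcal{A}(2,4,8)$ and the cusp stratum above each $L^{(\pm,\pm)}_{i}$ is connected for these $i$, forcing a single irreducible component whose identification with the connected modular curve $X(8)$ then rules out further splitting.
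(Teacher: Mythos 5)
Your proposal is correct and follows essentially the same route as the paper: identify $Z\setminus Z^\circ$ with $\pi^{-1}(\partial\mathcal{A}(2,4)^{\rm sa})^{\rm red}$ using that both are Satake compactifications compatible with $\G_Z\subset\G(2,4)$, invoke Lemma \ref{line1} for the $30$ lines, and count components of the preimages case by case to get $2+8+16+16=42$. You actually supply more of the explicit substitution work than the paper's one-line ``by writing down all divisors explicitly,'' and your reliance on the stated identification of $D^{\pm,\pm}_i$ ($i=1,3,4,6$) with the genus-$5$ curve $X(8)$ matches how the paper itself handles that point.
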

\begin{proof}Recall that $Z$ and $\mathcal{A}(2,4)^{{\rm sa}}$ are both the Satake compactifications and these are 
 defined by the theta embeddings which are compatible with natural projection 
$\G_Z\backslash \mathbb{H}_2\lra \mathcal{A}(2,4)$ induced from the inclusion $\G\subset \G(2,4)$. Therefore by definition of Satake compactification, 
the boundary should be given by $\pi^{-1}(\partial \mathcal{A}(2,4)^{{\rm sa}})^{{\rm red}}$. 
Then by writing down all divisors explicitly, one has the claims.   
\end{proof}
\begin{Cor}The variety $Z^\circ=Z\setminus D$ is a smooth algebraic variety and it is an algebraic description of 
$Z^\circ(\C)=\G_Z\backslash\H_2$. Further $Z^\circ$ can be regarded as a smooth, geometrically irreducible scheme over $\Z[\frac{1}{2}]$.  
\end{Cor}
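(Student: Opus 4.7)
The plan is to assemble the corollary directly from the ingredients already in place: the identification of the singular locus of $Z$ from Section~1, the determination of the boundary from Proposition~\ref{line2}, and the shape of the defining equations $(2.1)\text{--}(2.4)$.

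First I would establish smoothness over $\C$. By the discussion in Section~1, the singular locus of $Z$ equals $L_{0,1}\cup L_{2,3}$. Both of these lines are among the irreducible components of $D$ enumerated in Proposition~\ref{line2}, so $L_{0,1}\cup L_{2,3}\subset D$. Consequently $Z\setminus D$ is contained in $Z^{\rm sm}=Z\setminus(L_{0,1}\cup L_{2,3})$, and is smooth as an open subscheme of a smooth variety. The identification $Z^\circ(\C)=\G_Z\bk \H_2$ is then just a rephrasing of Proposition~\ref{line2}: the Satake compactification of $\G_Z\bk\H_2$ is $Z$, and its boundary is precisely $D$, so the analytic open $\G_Z\bk\H_2$ matches the Zariski open $Z\setminus D$.

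Next I would address geometric irreducibility. Since $\H_2$ is connected and $\G_Z$ acts on it, the quotient $\G_Z\bk\H_2$ is connected, hence the complex variety $Z^\circ_{\C}=Z\setminus D$ is connected; being an open subscheme of a reduced scheme, it is irreducible. Base-changing from $\C$ back down to $\Q$ (or to $\Z[\tfrac12]$), irreducibility of all geometric fibers is an open condition and is implied by irreducibility over $\C$, so $Z^\circ$ is geometrically irreducible.

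Finally, for the $\Z[\tfrac12]$-structure and smoothness over this base, I would use that the defining equations $(2.1)\text{--}(2.4)$ have coefficients in $\Z[\tfrac12]$ (the factor of $2$ on the right-hand sides is the only denominator needed), and that by Proposition~\ref{line2} the divisor $D$ is already a $\Z[\tfrac12]$-scheme. It remains to verify smoothness of $Z\setminus D$ fiberwise over $\Z[\tfrac12]$. The main obstacle I expect is precisely this step: one must check that no additional singularities appear after reduction modulo odd primes. I would handle it by applying the Jacobian criterion to the four explicit equations and showing that the $4\times 8$ Jacobian drops rank only along the reduction of $L_{0,1}\cup L_{2,3}$ in every odd characteristic. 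Since both lines $L_{i,i+1}$ are cut out by $Y_0=Y_1=Y_2=Y_3=X_i=X_{i+1}=0$ — equations which make sense and remain the singular locus in every characteristic $p\neq 2$ — the mod-$p$ singular locus is contained in $D_{\F_p}$, giving smoothness of the complement.
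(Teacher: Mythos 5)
Your overall route is the same as the paper's (the paper's proof is a one--line reduction to Proposition \ref{line2} and the defining equations, which is exactly what you flesh out), and the smoothness part is fine: the singular locus of $Z$ is $L_{0,1}\cup L_{2,3}$, these are components of $D$, and the rank computation for the $4\times 8$ Jacobian of $Y_i^2=2\ell_i(X)$ only ever uses the invertibility of $2$, so the singular locus of each fiber in odd characteristic is again the two lines and $Z\setminus D$ is fiberwise smooth. A small slip: ``connected and reduced, hence irreducible'' is not a valid implication (two lines meeting at a point are connected and reduced); what you need, and have, is connected and smooth (or normal), which does give irreducibility of $Z^\circ_\C$.

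The genuine gap is in the deduction of geometric irreducibility over $\Z[\frac{1}{2}]$. Irreducibility of the geometric generic fiber does follow from irreducibility over $\C$, but ``geometric irreducibility of the fibers is an open condition'' only yields irreducibility of the fibers over a nonempty open subset of ${\rm Spec}\,\Z[\frac{1}{2}]$, i.e.\ for all but finitely many primes; it does not by itself cover every odd prime, which is what the corollary asserts and what is wanted for the point counts. To close this you need an argument valid in each characteristic $p\neq 2$. One way: the projection $Z\to\P^3$, $[\underline{Y}:\underline{X}]\mapsto[\underline{X}]$, is finite of degree $16$, obtained by adjoining square roots of the four quadrics $2(X_0X_3\pm X_1X_2)$, $2(X_0X_2\pm X_1X_3)$; in any characteristic $\neq 2$ these are pairwise distinct irreducible quadratic forms, so no product of a nonempty subset of them is a square in the polynomial ring (a UFD), hence none is a square in the function field of $\P^3_{\overline{\F}_p}$, and the corresponding multiquadratic extension is a field; therefore $Z_{\overline{\F}_p}$ is irreducible, and $Z^\circ_{\overline{\F}_p}$, being a dense open subset (every component of the fiber is $3$--dimensional and dominates $\P^3$ by finiteness, while $D$ has dimension $\le 2$), is irreducible as well. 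Alternatively one can argue through the birational map to ${\rm Cone}_\infty(F)$ of Proposition \ref{iso}, whose fibers are geometrically irreducible in odd characteristic because the Fermat quartic is, but then one must still rule out extra components of $Z_{\overline{\F}_p}$ supported in the complement of $U_2$, which again comes down to the function--field computation above.
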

\begin{proof}
These are easy to follow from Proposition \ref{line2} and the defining equation of $Z$ in Section 1.
\end{proof}
\begin{Rem}\label{Rem:defG_Z}
The variety $Z$ is defined by the Zariski closure of the image of the map from $\alpha_Z:\mathbb{H}_2\lra \mathbb{P}^7$ by using the 
following theta functions:
$$\theta_{(1,0,0,0)}(\tau),\ \theta_{(1,1,0,0)}(\tau),\ \theta_{(1,0,0,1)}(\tau),\ \theta_{(1,1,1,1)}(\tau),$$
$$\theta_{(0,0,0,0)}(2\tau),\ \theta_{(0,1,0,0)}(2\tau),\ \theta_{(1,0,0,0)}(2\tau),\ 
\theta_{(1,1,0,0)}(2\tau),$$ 
(see section \ref{construction} for the definition of these theta constants $\th$). 
Then $\G_Z$ is defined by the subset of $\G(2,4)$ consisting of an elements $g$ such that 
$g$ fixes $\alpha_Z$. This means that for such $g$, there exists a non-zero constant $c_g$ such that 
$$\theta_\ast(\tau)|[g]=c_g \theta_\ast(\tau),\ \ast\in \{(1,0,0,0),(1,1,0,0),(1,0,0,1),(1,1,1,1)\}$$ and 
$$\theta_\ast(2\tau)|[g]=c_g \theta_\ast(2\tau),\ \ast\in \{(0,0,0,0),(0,1,0,0),(1,0,0,0),(1,1,0,0)\}$$

Using the transformation formula (cf. Lemma \ref{lem:igusa}), one can find that $\G_Z$ is generated by $\G(4,8)$ and 
$$e_1e_4,\ e_1e_6,\ e_1e^2_9,\ e^2_8e_3,\ e_2e^2_{10}$$
in $\G(2,4)$ where $e_i$'s are defined in Section \ref{construction}. 
Therefore, by Proposition \ref{prop:table}, a $2r$-tuple product $\prod_{j=1}^{2r} \th_{\m_j}$ of even or odd theta constants belongs to $M_{r}^2(\G_Z)$, if and only if 
\begin{eqnarray*}
\sum_j b_j c_j + \sum_j c_j d_j \equiv \sum_j b_j c_j + \sum_j c_j \equiv \sum_j b_j + \sum_j a_j d_j \equiv \sum_j a_jd_j + \sum_j d_j \\
\equiv \sum_j b_jc_j + \sum_j a_jc_j -1 \equiv 0 \pmod{2} 
\end{eqnarray*}
where $\m_j = (a_j,b_j,c_j,d_j)$.
\end{Rem}
Finally we discuss another smooth compactification of $Z^\circ$ so that the boundary consists of normal crossing divisors. 
It is straightforward that each irreducible component of $D$ necessarily intersects transversally another component of $D$. 
Further only three components pass at an intersection point. More precisely, for such a point $p_0\in D$, 
the etale neighborhood of $D$ at $p_0$ is isomorphic to $\{x=y=0\}\ \cup \{y=z=0\}\ \cup \{z=x=0\}$ in 
$\A^3={\rm Spec}(\C[x,y,z])$. 
Furthermore, three components of $D$ which are passing an intersection point never be lying on the same plane.  
From this 
if we denote by $W$ the resolution of $Z$ along the components of $D$,  then one can see that 
the strict transform $\widetilde{D}$ of $D$ forms a strictly normal crossing divisors of $W$. 
This should be called a kind of Igusa compactification. We finish this subsection with studying the cohomologies of $W$. 
\begin{prop}\label{W}$H^2(W,\C)=H^2(\widetilde{Z},\C)\oplus \C^{\oplus 40}$ as a Hodge module 
$($ hence $\C$ is of Hodge type $(1,1))$ and in particular, 
$H^{2,0}(W,\C)=H^{2,0}(\widetilde{Z},\C)\simeq H^{2,0}(F,\C)\simeq S^1_3(\G^1(4))$.
\end{prop}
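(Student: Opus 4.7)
The plan is to use the blowup formula for cohomology, applied to the factorization $W \to \widetilde{Z} \to Z$. The second map already resolves the two singular loci $L_{0,1}, L_{2,3}$ of $Z$, so the first map $W \to \widetilde{Z}$ must resolve the remaining $42-2=40$ irreducible components of $D$. My first step is to verify that, after a suitable ordering, this is a sequence of $40$ blowups of $\widetilde{Z}$ along smooth connected curves (strict transforms of the remaining components of $D$). The only subtle points are the triple-intersection loci: at such a point, by the local model $\{x=y=0\}\cup\{y=z=0\}\cup\{z=x=0\}$ in $\A^3$, the three curves have linearly independent tangent lines in the ambient threefold. Blowing up one separates the strict transforms of the other two (they meet the exceptional $\P^1$-bundle at distinct points), keeping the situation smooth and allowing the next blowup along a smooth curve. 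Iterating produces a smooth $W$ with $\widetilde{D}$ an SNCD.

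The second step applies the standard blowup formula for the blowup $\mathrm{Bl}_C X$ of a smooth threefold $X$ along a smooth connected curve $C$:
\[
H^2(\mathrm{Bl}_C X,\C)=H^2(X,\C)\oplus H^0(C,\C)(-1)=H^2(X,\C)\oplus \C(-1)
\]
as pure Hodge structures, the $\C(-1)$ being the class of the new exceptional divisor (of Hodge type $(1,1)$). Iterating the formula $40$ times yields
\[
H^2(W,\C)=H^2(\widetilde{Z},\C)\oplus \C(-1)^{\oplus 40},
\]
which gives the first assertion and shows the $40$ extra classes are of type $(1,1)$.

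For the statement on $H^{2,0}$, I would take $(2,0)$-components in the above splitting. Since $\C(-1)^{2,0}=0$, we get $H^{2,0}(W,\C)=H^{2,0}(\widetilde{Z},\C)$. By Proposition \ref{main-coh} together with the Betti/étale comparison, $H^2(\widetilde{Z},\C)=\C(-1)\oplus H^2(F,\C)$ as Hodge structures; taking the $(2,0)$-part gives $H^{2,0}(\widetilde{Z},\C)\simeq H^{2,0}(F,\C)$. Finally, the Remark preceding Proposition \ref{fermat} identifies $F$ over $\Q$ with Shioda's elliptic modular surface of level $4$, so $H^{2,0}(F,\C)\simeq S^1_3(\G^1(4))$.

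The main obstacle is the first step: checking that the $40$ relevant components of $D$ really are smooth curves in $\widetilde{Z}$ (which requires understanding how they lie away from, or pass over, the singular curves $L_{0,1},L_{2,3}$ before resolution) and that the successive blowups at triple points indeed produce a smooth total space with the required NCD structure. The cohomology argument itself, once the geometric setup is in place, is a routine application of the blowup formula.
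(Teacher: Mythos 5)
Your proof is correct and follows essentially the same route as the paper: the paper's proof is a one-line citation of Manin's monoidal-transformation (blowup) formula at p.~443 of \cite{manin}, which is precisely the iterated formula $H^2(\mathrm{Bl}_C X,\C)=H^2(X,\C)\oplus H^0(C,\C)(-1)$ that you apply forty times, the geometric setup (the $42$ boundary components, the local triple-point model, and the normal-crossing claim) having already been established in Section~\ref{description}. The extra verifications you flag (smoothness and connectedness of the $40$ centers and their separation after blowing up at triple points) are exactly what the paper takes for granted from its construction of $W$, so your write-up simply makes the cited step explicit.
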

\begin{proof}This follows from the result at p.443 of \cite{manin}. 
\end{proof}

\subsection{L-function of $Z^\circ$}\label{L2}
In this subsection we study the $\ell$-adic cohomology of the second degree of $Z^\circ$ and in particular, 
determine the L-function of $H^2_{{\rm et}}(Z^\circ_{\bQ},\Q_\ell)(-1)\simeq H^4_{{\rm et,c}}(Z^\circ_{\bQ},\Q_\ell)$. 
The following fact  will be used soon later. We denote by $H^\ast_c$ the Betti (singular) cohomology with compactly support. 
\begin{prop}\label{vanishing}For any smooth variety $Z'$ and an open immersion $Z^\circ\hookrightarrow Z'$, one has 
$H^5_c(Z^\circ,\Q)=H^5_c(Z',\Q)=0$. 
\end{prop}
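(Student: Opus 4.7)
The first observation is that $Z^\circ$ is smooth of complex dimension $3$, and any $Z'$ containing $Z^\circ$ as an open subvariety is likewise smooth of complex dimension $3$. By Poincar\'e duality,
\[
H^5_c(Z^\circ,\Q) \cong H^1(Z^\circ,\Q)^{\vee}\quad\text{and}\quad H^5_c(Z',\Q) \cong H^1(Z',\Q)^{\vee},
\]
so the vanishing in question is equivalent to $H^1(Z^\circ,\Q)=H^1(Z',\Q)=0$.

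For the open immersion $Z^\circ\hookrightarrow Z'$, set $T=Z'\setminus Z^\circ$, a closed subset of codimension $\geq 1$ in the smooth $Z'$. The localization long exact sequence reads
\[
\cdots \to H^1_T(Z',\Q)\to H^1(Z',\Q)\to H^1(Z^\circ,\Q)\to\cdots,
\]
and, stratifying $T$ into smooth locally closed pieces of codimension $c\geq 1$, cohomological purity ($H^{1}$ of the local cohomology of a smooth stratum of codimension $c$ is $H^{1-2c}(-)(-c)=0$ for $c\geq 1$) forces $H^1_T(Z',\Q)=0$. Therefore the restriction $H^1(Z',\Q)\hookrightarrow H^1(Z^\circ,\Q)$ is injective and the problem reduces to the single assertion $H^1(Z^\circ,\Q)=0$.

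To establish this last vanishing I would use the smooth compactification $W$ constructed in \S\ref{description} with normal crossing boundary $\widetilde{D}=W\setminus Z^\circ$. Deligne's weight filtration on the smooth variety $Z^\circ$ provides the residue exact sequence
\[
0\to H^1(W,\Q)\to H^1(Z^\circ,\Q)\xrightarrow{\mathrm{res}} H^0(\widetilde{D}^{[1]},\Q)(-1)\xrightarrow{\mathrm{Gysin}} H^2(W,\Q),
\]
where $\widetilde{D}^{[1]}$ denotes the disjoint union of the irreducible components of $\widetilde{D}$. The first term vanishes, since $W$ is smooth projective and birational to $\widetilde{Z}$, and $h^{1,0}(\widetilde{Z})=0$ by (\ref{hn}), so $h^{1,0}(W)=h^{0,1}(W)=0$ by the birational invariance of $h^{1,0}$. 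The injectivity of Gysin will be deduced from Proposition \ref{W}: the decomposition $H^2(W,\C)=H^2(\widetilde{Z},\C)\oplus\C^{\oplus 40}$ identifies the $\C^{\oplus 40}$ summand with the span of the fundamental classes of the forty boundary components contributed by the resolution $W\to\widetilde{Z}$, whose linear independence in $H^2(W,\Q)$ is then the desired Gysin injectivity.

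The principal obstacle is this linear independence, equivalently the identification of the added $(1,1)$-summand of $H^2(W)$ in Proposition \ref{W} with the span of the boundary divisor classes; it requires a careful bookkeeping of the resolution producing $W$, in particular of the blowups at the triple points where three components of $D$ meet. A cleaner alternative, bypassing Hodge theory, is to observe that $\Gamma_Z$ is a finite-index subgroup of $\Sp_2(\Z)$, hence a congruence subgroup by Bass--Milnor--Serre, with finite abelianization, whence $H^1(Z^\circ,\Q)\cong \mathrm{Hom}(\Gamma_Z^{\mathrm{ab}},\Q)=0$.
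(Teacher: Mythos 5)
Your fallback argument is, in substance, the paper's own proof: the paper likewise applies Poincar\'e duality to get $H^5_c(Z^\circ,\Q)\simeq H_1(\G_Z\backslash\H_2,\Q)=\G_Z/[\G_Z,\G_Z]\otimes_\Z\Q$ and quotes the vanishing of the latter. Two repairs to your phrasing of it: the inference ``congruence subgroup (Bass--Milnor--Serre), hence finite abelianization'' is not a formal implication, and the congruence subgroup property is not needed anyway, since $\G_Z\supset\G(4,8)$ is a congruence subgroup by construction; the finiteness of $\G_Z^{\mathrm{ab}}$ is the standard higher-rank fact for finite-index subgroups of $\Sp_2(\Z)$ (property (T) of the ambient real group, or Borel's $H^1$-vanishing). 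One also needs that $\G_Z\backslash\H_2$ is a $K(\G_Z,1)$ (or simply that $\H_2$ is contractible, which suffices with $\Q$-coefficients) to identify $H^1(Z^\circ,\Q)$ with $\Hom(\G_Z,\Q)$; the paper invokes exactly this in Section \ref{diff}. For the passage from $Z^\circ$ to $Z'$ you argue dually to the paper: you use purity/localization to get $H^1(Z',\Q)\hookrightarrow H^1(Z^\circ,\Q)$ and then dualize, whereas the paper uses the compact-support sequence $H^5_c(Z^\circ)\to H^5_c(Z')\to H^5_c(D)$ together with $\dim (Z'\setminus Z^\circ)\le 2$; both are fine, and both (like the paper) tacitly assume $Z^\circ$ is dense in $Z'$, as it is in every application.

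Your primary, Hodge-theoretic route is where the genuine gap lies, as you yourself flag: after $h^{1,0}(W)=0$, the residue sequence reduces $H^1(Z^\circ,\Q)=0$ to the injectivity of the Gysin map $H^0(\widetilde D^{[1]},\Q)(-1)\to H^2(W,\Q)$, i.e.\ to the linear independence of the classes of \emph{all} boundary components in $H^2(W,\Q)$. That independence is essentially equivalent to the statement being proved and does not follow from Proposition \ref{W}: the summand $\C^{\oplus 40}$ there records extra $(1,1)$-classes coming from the resolution and is nowhere identified with the span of the boundary divisor classes; moreover $\widetilde D^{[1]}$ has at least $42$ components (the $40$ rational ones plus the two copies of the Fermat quartic $\widetilde L_{0,1},\widetilde L_{2,3}$), together with whatever exceptional divisors lie over the triple points, so even the count in your sketch is off. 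As written that route does not close; the group-cohomological argument is the one to keep, and it is the paper's.
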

\begin{proof}
First of all we prove that $H^5_c(Z^\circ,\Q)=0$. 
Since ${Z^\circ}(\C)\simeq \G_Z\backslash\H_2$, then 
it follows from Poincar\'e duality that $H^5_c({Z^\circ}(\C),\Q)\simeq H_1(\G_Z\backslash \H_2,\Q)=\G_Z/[\G_Z,\G_Z]\otimes_\Z \Q=0$. 

Put $D:=Z'\setminus Z^\circ$. Then $H^5_c(D,\Q)=0$ since the dimension of $D$ as an algebraic variety is at most 2.  
Then the claim follows from the long exact sequence 
$$\cdots\lra H^5_{c}(Z^\circ,\Q)\lra 
 H^5_{c}(Z',\Q)\lra H^5_{c}(D,\Q)$$
with $H^5_{c}(Z^\circ,\Q)=H^5_{c}(D,\Q)=0$. 
\end{proof}
\begin{Cor}\label{vanishing-et}For any smooth variety $Z'$ over $\Q$ and an open immersion $Z^\circ\hookrightarrow Z'$ over $\Q$, one has 
$H^5_{{\rm et,c}}(Z^\circ_{\bQ},\Q_\ell)=H^5_{{\rm et,c}}({Z'}_{\bQ},\Q_\ell)=0$. 
\end{Cor}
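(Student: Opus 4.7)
The plan is to reduce the \'etale vanishing statement directly to the Betti vanishing already established in Proposition \ref{vanishing}, via Artin's comparison theorem for $\ell$-adic cohomology with compact support. The whole content of the corollary is this transport; no new geometric input is needed beyond what Proposition \ref{vanishing} already provides.

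Concretely, I would first fix an embedding $\bQ\hookrightarrow\C$. For any variety $X$ of finite type over $\Q$, proper (smooth) base change gives $H^i_{\mathrm{et},c}(X_{\bQ},\Q_\ell)\simeq H^i_{\mathrm{et},c}(X_\C,\Q_\ell)$, and then Artin's comparison theorem yields a canonical isomorphism
$$H^i_{\mathrm{et},c}(X_\C,\Q_\ell)\;\simeq\; H^i_c(X(\C),\Q)\otimes_{\Q}\Q_\ell.$$
Applying this to $X=Z^\circ$ and $X=Z'$ reduces the vanishing of $H^5_{\mathrm{et},c}$ to the vanishing of the topological $H^5_c$ of the corresponding analytic spaces. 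Since $Z^\circ\hookrightarrow Z'$ is an open immersion of smooth $\Q$-varieties, base change to $\C$ produces an open immersion $Z^\circ(\C)\hookrightarrow Z'(\C)$ of smooth complex manifolds satisfying the hypotheses of Proposition \ref{vanishing}; that proposition therefore gives $H^5_c(Z^\circ(\C),\Q)=H^5_c(Z'(\C),\Q)=0$, and the comparison isomorphism transports both vanishings to the \'etale side.

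There is no real obstacle here. The only delicate point to verify in passing is that the analytic open $Z^\circ(\C)$ appearing after base change is the same as the one identified with $\G_Z\backslash\H_2$ in the proof of Proposition \ref{vanishing}; this is immediate from the construction of $Z^\circ$ as a quasi-projective $\Z[\tfrac{1}{2}]$-scheme whose complex points carry the analytic structure of $\G_Z\backslash\H_2$.
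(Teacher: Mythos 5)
Your proposal is correct and follows essentially the same route as the paper: the paper's proof is exactly the reduction, via the comparison theorem between \'etale and singular cohomology (after base change to $\C$), to the Betti vanishing of Proposition \ref{vanishing}. Your write-up merely makes explicit the intermediate step $H^i_{\mathrm{et},c}(X_{\bQ},\Q_\ell)\simeq H^i_{\mathrm{et},c}(X_{\C},\Q_\ell)$ (invariance under extension of algebraically closed base fields, which indeed follows from proper base change after compactifying), so there is nothing to add.
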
 
\begin{proof}
Since $Z^\circ$ and $Z'$ are both smooth, by the comparison theorem between etale and singular cohomologies,  one has the claim with 
Proposition \ref{vanishing}.
\end{proof}
We now compute the cohomology of $H^4_{{\rm et,c}}(Z^\circ,\Q_\ell)$. 
\begin{thm}\label{coh-deg2}The following description holds: 
$$H^2_{{\rm et}}(Z^\circ_{\bQ},\Q_\ell)(-1)\simeq H^4_{{\rm et,c}}(Z^\circ_{\bQ},\Q_\ell)=
 H^4_{{\rm et}}(\widetilde{Z}_{\bQ},\Q_\ell)/\Q_\ell(-2)^{\oplus 2}.$$
\end{thm}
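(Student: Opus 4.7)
The plan is to deduce the first isomorphism from Poincar\'e duality on the smooth threefold $Z^\circ$, and the second equality from the localization long exact sequence associated to the open immersion $Z^\circ \hookrightarrow \widetilde{Z}$. For the first isomorphism, Poincar\'e duality on $Z^\circ$ (smooth of complex dimension $3$) gives $H^4_{{\rm et,c}}(Z^\circ_{\bQ},\Q_\ell) \simeq H^2_{{\rm et}}(Z^\circ_{\bQ},\Q_\ell)^{\vee}(-3)$. Once the computation below shows that $H^2_{{\rm et}}(Z^\circ_{\bQ},\Q_\ell)$ is a direct sum of Galois modules of the form $\Q_\ell(-1)\otimes \chi$ with $\chi$ a quadratic character, the self-duality $H^2_{{\rm et}}(Z^\circ_{\bQ},\Q_\ell)^{\vee}\simeq H^2_{{\rm et}}(Z^\circ_{\bQ},\Q_\ell)(2)$ will upgrade this to the stated isomorphism $H^4_{{\rm et,c}}(Z^\circ_{\bQ},\Q_\ell) \simeq H^2_{{\rm et}}(Z^\circ_{\bQ},\Q_\ell)(-1)$.

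For the second equality, set $T:=\widetilde{Z}\setminus Z^\circ$ and use the localization long exact sequence
\begin{equation*}
\cdots \lra H^3_{{\rm et}}(T_{\bQ},\Q_\ell) \lra H^4_{{\rm et,c}}(Z^\circ_{\bQ},\Q_\ell) \lra H^4_{{\rm et}}(\widetilde{Z}_{\bQ},\Q_\ell) \lra H^4_{{\rm et}}(T_{\bQ},\Q_\ell) \lra H^5_{{\rm et,c}}(Z^\circ_{\bQ},\Q_\ell) \lra \cdots,
\end{equation*}
in which the final term vanishes by Corollary \ref{vanishing-et}. To compute $H^i_{{\rm et}}(T_{\bQ},\Q_\ell)$ for $i=3,4$, stratify $T = T_2 \cup T_1$, where $T_2 := \widetilde{L}_{0,1}\sqcup \widetilde{L}_{2,3}$ is the disjoint union of the two exceptional divisors (each isomorphic to the Fermat quartic $F$ by Proposition \ref{exdiv}), and $T_1$ is the union of the proper transforms of the remaining $40$ boundary curves enumerated in Proposition \ref{line2}. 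Since $\dim T_1 = 1$ and $T_2\cap T_1$ is a finite set of points, Mayer--Vietoris delivers $H^4_{{\rm et}}(T_{\bQ},\Q_\ell) = H^4_{{\rm et}}(F_{\bQ},\Q_\ell)^{\oplus 2} = \Q_\ell(-2)^{\oplus 2}$, while $H^3_{{\rm et}}(T_{\bQ},\Q_\ell)$ sits between groups that all vanish: $H^3_{{\rm et}}(T_1)=0$ because $T_1$ has dimension $1$, and $H^3_{{\rm et}}(F_{\bQ},\Q_\ell) = H^1_{{\rm et}}(F_{\bQ},\Q_\ell)^{\vee}(-2) = 0$ by Poincar\'e duality on the K3 surface $F$.

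With these inputs the localization sequence reduces to a short exact sequence
\begin{equation*}
0\lra H^4_{{\rm et,c}}(Z^\circ_{\bQ},\Q_\ell) \lra H^4_{{\rm et}}(\widetilde{Z}_{\bQ},\Q_\ell) \lra \Q_\ell(-2)^{\oplus 2}\lra 0,
\end{equation*}
whose right-hand map is restriction to $\widetilde{L}_{0,1}\sqcup\widetilde{L}_{2,3}$ and is split by the $G_\Q$-invariant fundamental classes $[\widetilde{L}_{0,1}], [\widetilde{L}_{2,3}] \in H^4_{{\rm et}}(\widetilde{Z}_{\bQ},\Q_\ell)$, both divisors being defined over $\Q$. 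Combining this splitting with the semisimplicity of $H^4_{{\rm et}}(\widetilde{Z}_{\bQ},\Q_\ell)$ from Proposition \ref{main-coh} identifies $H^4_{{\rm et,c}}(Z^\circ_{\bQ},\Q_\ell)$ with the Galois-module quotient $H^4_{{\rm et}}(\widetilde{Z}_{\bQ},\Q_\ell)/\Q_\ell(-2)^{\oplus 2}$. The most delicate step in this outline is the vanishing $H^3_{{\rm et}}(T_{\bQ},\Q_\ell) = 0$: it ultimately rests on $H^1_{{\rm et}}(F_{\bQ},\Q_\ell) = 0$ for the Fermat K3 surface, together with the dimension-based vanishing for $T_1$ and the finite set $T_1\cap T_2$.
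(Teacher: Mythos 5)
Your argument is essentially the paper's. For the second equality the paper likewise uses compactly supported localization sequences together with Corollary \ref{vanishing-et}, the vanishing $H^3_{{\rm et}}(F_{\bQ},\Q_\ell)=0$, the fact that the remaining boundary is a (proper) curve, and the semisimplicity of $H^4_{{\rm et}}(\widetilde{Z}_{\bQ},\Q_\ell)$ from Proposition \ref{main-coh}; the only difference is bookkeeping: the paper excises in two stages, first passing from $\widetilde{Z}$ to $Z^{{\rm sm}}$ (removing the two copies of $F$) and then from $Z^{{\rm sm}}$ to $Z^\circ$ (removing the residual curves $D'$), whereas you remove the whole boundary $T$ at once and recover $H^3(T)=0$, $H^4(T)=\Q_\ell(-2)^{\oplus 2}$ by Mayer--Vietoris. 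Two small corrections. First, your justification of $H^2_{{\rm et}}(Z^\circ_{\bQ},\Q_\ell)(-1)\simeq H^4_{{\rm et,c}}(Z^\circ_{\bQ},\Q_\ell)$ asserts that $H^2_{{\rm et}}(Z^\circ_{\bQ},\Q_\ell)$ is a direct sum of quadratic twists of $\Q_\ell(-1)$; it is not, since it contains the irreducible two-dimensional piece $V_{g,\ell}$. The needed essential self-duality nevertheless holds: $g$ has nebentypus $\chi_{-1}$ and CM by $\Q(\sqrt{-1})$, so $V_{g,\ell}\otimes\chi_{-1}\simeq V_{g,\ell}$ and hence $V_{g,\ell}^{\vee}\simeq V_{g,\ell}(2)$; this is the point that must be stated (the paper leaves the first isomorphism implicit, appealing only to Poincar\'e duality). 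Second, the cycle classes of the divisors $\widetilde{L}_{0,1},\widetilde{L}_{2,3}$ live in $H^2_{{\rm et}}(\widetilde{Z}_{\bQ},\Q_\ell(1))$, not in $H^4$, so they do not split your surjection as written; this remark is anyway superfluous, because surjectivity onto $\Q_\ell(-2)^{\oplus 2}$ already follows from $H^5_{{\rm et,c}}(Z^\circ_{\bQ},\Q_\ell)=0$ and the semisimplicity of $H^4_{{\rm et}}(\widetilde{Z}_{\bQ},\Q_\ell)$ alone identifies the kernel with the stated quotient.
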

\begin{proof}
We first compute $H^4_{{\rm et,c}}({Z^{{\rm sm}}}_{\bQ},\Q_\ell)$. 
Recall that $\widetilde{Z}\setminus Z^{\rm sm}=\widetilde{L}_{0,1}\coprod \widetilde{L}_{2,3}$ and 
$\widetilde{L}_{i,i+1},\ i=0,1$ is isomorphic to the Fermat quartic $F$ over $\Q$. Consider the following exact sequence 
$$H^3_{{\rm et,c}}((\widetilde{L}_{0,1}\coprod \widetilde{L}_{2,3})_{\bQ},\Q_\ell)\lra 
H^4_{{\rm et,c}}({Z^{{\rm sm}}}_{\bQ},\Q_\ell)\lra H^4_{{\rm et,c}}(\widetilde{Z}_{\bQ},\Q_\ell)=
H^4_{{\rm et}}(\widetilde{Z}_{\bQ},\Q_\ell)\lra $$
$$ \lra H^4_{{\rm et,c}}((\widetilde{L}_{0,1}\coprod \widetilde{L}_{2,3})_{\bQ},\Q_\ell)\lra 
H^5_{{\rm et,c}}({Z^{{\rm sm}}}_{\bQ},\Q_\ell)=0.$$
The vanishing of the last cohomology is due to Corollary \ref{vanishing-et}.
One can see that $$H^3_{{\rm et,c}}((\widetilde{L}_{0,1}\coprod \widetilde{L}_{2,3})_{\bQ},\Q_\ell)\simeq 
H^3_{{\rm et}}(F_{\bQ},\Q_\ell)^{\oplus 2}=0$$ and 
$H^4_{{\rm et,c}}((\widetilde{L}_{0,1}\coprod \widetilde{L}_{2,3})_{\bQ},\Q_\ell)\simeq 
H^4_{{\rm et}}(F_{\bQ},\Q_\ell)^{\oplus 2}=\Q_\ell(-2)^{\oplus 2}$. 
By Lemma \ref{vanishing-et}, one has 
$$H^4_{{\rm et,c}}({Z^{{\rm sm}}}_{\bQ},\Q_\ell)\simeq H^4_{{\rm et}}(\widetilde{Z}_{\bQ},\Q_\ell)^{\rm ss}/\Q_\ell(-2)^{\oplus 2}.$$
Put $D'=Z^{{\rm sm}}\setminus Z^\circ$. Then $D'$ consists of curves by Proposition \ref{line2}. 
The claim follows from the exact sequence 
$H^3_{{\rm et,c}}(D'_{\bQ},\Q_\ell)\lra 
H^4_{{\rm et,c}}({Z^\circ}_{\bQ},\Q_\ell)\lra H^4_{{\rm et,c}}({Z^{\rm sm}}_{\bQ},\Q_\ell)\lra 
 H^4_{{\rm et,c}}(D'_{\bQ},\Q_\ell)$ with $H^i_{{\rm et,c}}(D'_{\bQ},\Q_\ell)=0$ for $i>2$. 
\end{proof}
Putting (\ref{decomp}), Proposition \ref{main-coh}, and Theorem \ref{coh-deg2} together, one has 
\begin{thm}\label{main-theorem2} 
$L(s,H^2_{{\rm et}}(Z^\circ_{\bQ},\Q_\ell))=\zeta(s-1)^{8}L(s-1,\chi_{-1})^7
L(s-1,\chi_{2})^2 L(s-1,\chi_{-2})^2 L(s,g)$.
\end{thm}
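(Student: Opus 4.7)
The plan is to combine three ingredients already in hand: the decomposition (\ref{decomp}) of $H^2_{\rm et}(F_{\bQ}, \Q_\ell)$ for the Fermat quartic, the explicit description of $H^\ast_{\rm et}(\widetilde{Z}_{\bQ}, \Q_\ell)$ provided by Proposition \ref{main-coh}, and the identification of $H^2_{\rm et}(Z^\circ_{\bQ}, \Q_\ell)(-1)$ with a quotient of $H^4_{\rm et}(\widetilde{Z}_{\bQ}, \Q_\ell)$ furnished by Theorem \ref{coh-deg2}. Once these are assembled, every summand of $H^2_{\rm et}(Z^\circ_{\bQ}, \Q_\ell)$ is a known semisimple $G_{\Q}$-module and the L-function factors accordingly.

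Concretely, I first invoke Theorem \ref{coh-deg2} to write
$$H^2_{\rm et}(Z^\circ_{\bQ}, \Q_\ell)(-1) \simeq H^4_{\rm et}(\widetilde{Z}_{\bQ}, \Q_\ell)/\Q_\ell(-2)^{\oplus 2}.$$
Next, by Proposition \ref{main-coh}, $H^4_{\rm et}(\widetilde{Z}_{\bQ}, \Q_\ell) = \Q_\ell(-2) \oplus H^4_{\rm et}(F_{\bQ}, \Q_\ell)$, and the same proposition asserts $H^4_{\rm et}(F_{\bQ}, \Q_\ell) = H^2_{\rm et}(F_{\bQ}, \Q_\ell)(-1)$, so substituting (\ref{decomp}) gives
$$H^4_{\rm et}(F_{\bQ}, \Q_\ell) = \Q_\ell(-2)^{\oplus 9} \oplus \Q_\ell(-2)(\chi_{-1})^{\oplus 7} \oplus \Q_\ell(-2)(\chi_{2})^{\oplus 2} \oplus \Q_\ell(-2)(\chi_{-2})^{\oplus 2} \oplus V_{g,\ell}(-1).$$
Combining and absorbing the two quotiented copies of $\Q_\ell(-2)$ into the ten-fold trivial Tate summand of $H^4_{\rm et}(\widetilde{Z}_{\bQ}, \Q_\ell)$, and then untwisting by Tate, yields
$$H^2_{\rm et}(Z^\circ_{\bQ}, \Q_\ell) \simeq \Q_\ell(-1)^{\oplus 8} \oplus \Q_\ell(-1)(\chi_{-1})^{\oplus 7} \oplus \Q_\ell(-1)(\chi_{2})^{\oplus 2} \oplus \Q_\ell(-1)(\chi_{-2})^{\oplus 2} \oplus V_{g,\ell}$$
as $\Q_\ell[G_{\Q}]$-modules.

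The L-function assertion then follows termwise from the standard identities $L(s,\Q_\ell(-1)) = \zeta(s-1)$, $L(s,\Q_\ell(-1)(\chi_a)) = L(s-1,\chi_a)$, and the identification $L(s,V_{g,\ell}) = L(s,g)$, the last of which was the substance of Proposition \ref{fermat}.

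I do not expect a genuine obstacle here: the hard work has been done upstream, in the point counts of Corollary \ref{point2} and Proposition \ref{fermat}, the irreducibility and CM-modular identification of $V_\ell$, and the cohomological bookkeeping of Theorem \ref{coh-deg2}. The only minor point worth verifying is that the $\Q_\ell(-2)^{\oplus 2}$ being quotiented out (arising from the fundamental classes of the two exceptional divisors $\widetilde{L}_{0,1}$ and $\widetilde{L}_{2,3}$) sits inside the trivial-character Tate summand of $H^4_{\rm et}(\widetilde{Z}_{\bQ}, \Q_\ell)$. This is immediate because those classes lie in the image of the cycle class map from $\mathrm{NS}(\widetilde{Z})_{\bQ}$, hence are of pure type $(2,2)$ with trivial Galois action, and the semisimplicity supplied by Proposition \ref{main-coh} ensures that the quotient is well-defined as a direct summand complement.
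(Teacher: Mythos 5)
Your argument is exactly the paper's: Theorem \ref{main-theorem2} is obtained there by combining (\ref{decomp}), Proposition \ref{main-coh}, and Theorem \ref{coh-deg2}, with precisely the bookkeeping you carry out (ten trivial Tate classes minus the two quotiented copies of $\Q_\ell(-2)$ giving $\zeta(s-1)^8$, the twisted Tate classes and $V_{g,\ell}$ giving the remaining factors). Your closing remark that semisimplicity and the cycle-class origin of the two exceptional classes justify the quotient is a correct elaboration of what the paper leaves implicit, so there is nothing to add.
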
 

\subsection{Differential forms on $Z^\circ$}\label{diff}
In this subsection, we  shall discuss the relation of differential forms on $Z^\circ$ and some Siegel modular form 
of weight $(3,1)$. Throughout this subsection, we freely use the terminology of \cite{o-s}. 
Note that we are in position to apply the results of \cite{o-s} because $W$ is a smooth compactification of $Z^\circ$ 
so that the boundary is a normal crossing divisor. 

Since $\G_Z\backslash\H_2$  
is an Eilenberg-MacLane space of $\G_Z$, 
we have  $H^3(\G_Z,\C)=H^3(\G_Z\backslash\H_2,\C)=H^3(Z^\circ,\C)$. 
Put $j:Z^\circ\hookrightarrow W$. Then the parabolic cohomology of $\G_Z\backslash\H_2$ is 
defined by using Borel-Serre compactification and by Theorem 5.6 of \cite{o-s}, one has 
$$H^3_!(Z^\circ,\C)={\rm Im}(H^3_c(Z^\circ,\C)\longrightarrow H^3(Z^\circ,\C))={\rm Im}(
H^3(W,\C)\stackrel{j^\ast}{\longrightarrow}H^3(Z^\circ,\C)).$$
Since the LHS is independent of the choice of a smooth compactification of $Z^\circ$, the identification 
$H^3_!(Z^\circ,\C)={\rm Im}(
H^3(\widetilde{Z},\C)\stackrel{j'^\ast}{\longrightarrow}H^3(Z^\circ,\C))$ holds for the natural inclusion $j':Z^\circ\hookrightarrow \widetilde{Z}$ 
and therefore $H^3_!(Z^\circ,\C)=0$ since 
$H^3(\widetilde{Z},\C)=0$. 
It is well-known (cf. Section 7 in \cite{o-s}) that  
$H^3(Z^\circ,\C)=H^3_!(Z^\circ,\C)\oplus H^3_{{\rm Eis}}(Z^\circ,\C)=H^3_{{\rm Eis}}(Z^\circ,\C)$. 
Then via Eichler-Shimura embedding and the Poincar\'e residue map, one has 
$$M^2_3(\G_Z)^{{\rm KE}}\stackrel{ES}{\hookrightarrow} H^3(Z^\circ,\C)=H^3_{{\rm Eis}}(Z^\circ,\C)
\stackrel{{\rm res[1]}}{\lra}H^{2}(\widetilde{D}^{[1]},\C)\stackrel{P}{\lra} \bigoplus_{i=0,2}H^{2}(\widetilde{L}_{i,i+1},\C)$$
where the map $P$ is the natural projection with respect to the Hodge decomposition 
$H^{2}(\widetilde{D}^{[1]},\C)=\C^{\oplus 40}\oplus \bigoplus_{i=0,2}H^{2}(\widetilde{L}_{i,i+1},\C)$. 
Let us explain Eichler-Shimura map as follows. 
By Corollaire (3.2.13)-(ii) of \cite{deligne-hodge}, one has a natural isomorphism   
\begin{equation}\label{hodge}
H^0(W,\Omega^3(\log\partial W))\stackrel{\sim}{\lra}F^3H^3(Z^\circ,\C)\subset H^3(Z^\circ,\C)
\end{equation}
where $F^\ast$ is the decreasing filtration on $H^3(Z^\circ,\C)$. 
For $F\in M^2_3(\G_Z)^{{\rm KE}}$, the (logarithmic) differential 3-form $F(
\begin{bmatrix}
\tau_1 & \tau_2 \\
\tau_2 & \tau_3
\end{bmatrix}
)d\tau_1\wedge d\tau_2\wedge d\tau_3$ extends uniquely to $W$ and hence gives an element of $H^0(W,\Omega^3(\log\partial W))$. 
Combining this with (\ref{hodge}), 
one obtains $ES(F)\in H^3(Z^\circ,\C)$.  
The Poincar\'e residue map res$[1]$ is injective since ${\rm Ker}({\rm res}[1])=W_3H^3(Z^\circ,\C)=
H^3_!(Z^\circ,\C)=0$ (see p.492, (8) of \cite{o-s} ). 
By p.486, the last part of Section 1 of \cite{o-s}, one has $H^{2,0}(\widetilde{L}_{i,i+1},\C)= S^1_3(\G^1(4))$ and further 
$S^1_3(\G^1(4))\stackrel{\sim}{\lra}S^1_3(\Gamma^1_0(16),\chi_{-1})$ where the isomorphism is given by 
$f(\tau)\mapsto f(4\tau)$. 
One can see that the composite map of $ES$, ${\rm res}[1]$, and $P$ injects into $ \bigoplus_{i=0,2}H^{2}(\widetilde{L}_{i,i+1},\C)$. 

On the other hand, the natural inclusion map $\partial W\hookrightarrow W$ induces 
$H^{2,0}(W)\hookrightarrow H^{2,0}(\partial W)=H^2(\widetilde{L}_{0,1},\C)\oplus 
H^2(\widetilde{L}_{2,3},\C)$ (note that the injectivity is not the case and it strongly depends on our situation) 
and by Satz 6 of \cite{weissauer-vector}, one has a natural identification 
\begin{equation}\label{isom}
H^{2,0}(W,\C)=H^{2,0}(Z^\circ,\C)\simeq M^2_{(3,1)}(\G_Z).
\end{equation}
To make what we carry out more precise, we need to mention the relation between Satake compactification and 
the Siegel $\Phi$-operator. Recall that for a function $F$ on $\mathbb{H}_2$, the Siegel $\Phi$-operator is 
defined by 
$$\Phi(F)(\tau_1):=\lim_{t\to \infty}F
(\begin{bmatrix}
 \tau_1 & 0\\
0 & \sqrt{-1}t
\end{bmatrix}),\ \tau_1\in \mathbb{H}_1.$$ 
The coordinates $Y_3,Y_0,Y_2,Y_1,X_0,X_1,X_2,X_3$ correspond to 
$$\theta_{(1,0,0,0)}(\tau),\ \theta_{(1,1,0,0)}(\tau),\ \theta_{(1,0,0,1)}(\tau),\ \theta_{(1,1,1,1)}(\tau),$$
$$\theta_{(0,0,0,0)}(2\tau),\ \theta_{(0,1,0,0)}(2\tau),\ \theta_{(1,0,0,0)}(2\tau),\ 
\theta_{(1,1,0,0)}(2\tau),\ \tau\in \mathbb{H}_2 $$
respectively as we explained already at Remark \ref{Rem:defG_Z}. 

Put $s=
\begin{bmatrix}
0 & 1 \\
1 & 0
\end{bmatrix}$ and $s'=
\begin{bmatrix}
0 & 0 \\
2 & 0 
\end{bmatrix}$.  Define $g_0,g_2\in {\rm Sp}_2(\Q)$ by 
\begin{eqnarray}
g_0=
\begin{bmatrix}
s & 0_2 \\
0_2 & s
\end{bmatrix},\ 
g_2=
\begin{bmatrix}
s &  0_2 \\
s' & s
\end{bmatrix}. \label{eqn:defg0g2}
\end{eqnarray}
Since $\widetilde{L}_{i,i+1}$ is a part of the boundary of the Satake compactification $Z$, 
all of theta functions corresponding to the coordinates $Y_0,Y_1,Y_2,Y_3,X_i,X_{i+1}$ should be zero under 
the Siegel $\Phi$-operator. However it is easily checked that $X_i$ and $X_{i+1}$ never be zero under $\Phi$ simultaneously. 
This means that the usual Siegel $\Phi$-operator does not directly  relate to $\widetilde{L}_{i,i+1}$. 
This is compatible with the fact that $\Phi(F_Z)=0$. 
So we have to modify this. Fortunately one can easily see that 
$$\Phi(Y_0|[g_i])=\Phi(Y_1|[g_i])=\Phi(Y_2|[g_i])=\Phi(Y_3|[g_i])=\Phi(X_i|[g_i])=\Phi(X_{i+1}|[g_i])=0\ {\rm for}\ i=0,2$$
(see p.470 of \cite{ST} and use Lemma \ref{lem:igusa}) and by definition of Satake compactification, 
the pullback $g^\ast_{i}(\widetilde{L}_{i,i+1})$ is also a part of the boundary of $Z$. 
This is compatibles with the fact that 
$$\Phi(F_Z|[g_0])(\tau_1)=\Phi(F_Z|[g_0])(\tau_1)=g(\frac{\tau_1}{4})=\theta^2_{(0,0)}(\tau_1)
\theta^2_{(0,1)}(\tau_1)\theta^2_{(1,0)}(\tau_1)$$
where $g\in S_3(\G^1_0(16),\chi_{-1})$ is our elliptic newform. 
The pullback induces the isomorphism $H^2(g^\ast_i(\widetilde{L}_{i,i+1}),\C)\simeq H^2(\widetilde{L}_{i,i+1},\C)$. 
This gives a natural injection 
\begin{equation}\label{isom1}
H^{2,0}(Z^\circ,\C)\simeq H^{2,0}(W,\C)\stackrel{\hookrightarrow}{\lra}
\ds\bigoplus_{i=0,2}H^2(\widetilde{L}_{i,i+1},\C)\simeq \bigoplus_{i=0,2}H^2(g^\ast_i(\widetilde{L}_{i,i+1}),\C)
\end{equation}

Summing up, one has the following commutative diagram:
$$
\begin{CD}
H^3(\G_Z\backslash\H_2,\C)=H^3_{{\rm Eis}}(\G_Z\backslash\H_2,\C) @> {P\circ\rm res[1]} >> 
\bigoplus_{i=0,2}H^2(g^\ast_i(\widetilde{L}_{i,i+1}),\C) @< \stackrel{(\ref{isom1})}{\hookleftarrow}<<  H^{2,0}(Z^\circ,\C)\\
ES @AAA  \hookrightarrow @AAA \stackrel{(\ref{isom})}{\simeq} @AAA    \\
M^2_3(\G_Z)^{{\rm KE}} @>\Phi_1 >> S^1_3(\G^1(4))\oplus S^1_3(\G^1(4)) @< \Phi_2 << M^2_{(3,1)}(\G_Z).
\end{CD}
$$
The bottom maps are defined by $\Phi_i(F)=(\Phi(F|[g_0]),\Phi(F|[g_2])), i=1,2$ where $\Phi$ is the 
Siegel operator (see \cite{arakawa} for the Siegel $\Phi$-operator in case vector valued Siegel modular forms).  
Summing up, we have shown the following:
\begin{prop}\label{dim} ${\rm dim}_\C M^2_3(\G_Z)^{{\rm KE}}=
{\rm dim}_\C M^2_{(3,1)}(\G_Z)={\rm dim}_\C  H^{2,0}(Z^\circ,\C)=1$.
\end{prop}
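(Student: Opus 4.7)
First, I would tackle the three dimensions in increasing order of difficulty, since two of them are essentially immediate from what has already been set up in this section, and only the bound on Klingen Eisenstein series requires real work via the commutative diagram displayed just above.

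For $H^{2,0}(Z^\circ,\C)$: the Hodge numbers $(\ref{hn})$ give $h^{2,0}(\widetilde{Z})=1$ directly. Since $Z^\circ$ is open in the smooth projective variety $\widetilde{Z}$ and holomorphic $2$-forms extend uniquely to any smooth projective model of $Z^\circ$ (as already noted in the discussion preceding $(\ref{eisen})$), we obtain $H^{2,0}(Z^\circ,\C)\simeq H^{2,0}(\widetilde{Z},\C)$, and hence dimension $1$. For $M^2_{(3,1)}(\G_Z)$: the identification $(\ref{isom})$, which is Satz~6 of \cite{weissauer-vector}, yields $M^2_{(3,1)}(\G_Z)\simeq H^{2,0}(W,\C)\simeq H^{2,0}(Z^\circ,\C)$, so this space is also $1$-dimensional.

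For $M^2_3(\G_Z)^{\rm KE}$ I would exploit the top row of the diagram. The map $ES$ is injective by construction, and $P\circ{\rm res}[1]$ is injective because its kernel $W_3H^3(Z^\circ,\C)=H^3_!(Z^\circ,\C)$ vanishes (as observed right after $(\ref{hodge})$). Hence the composition
\[ \Phi_1\,:\,M^2_3(\G_Z)^{\rm KE}\,\hookrightarrow\,\bigoplus_{i=0,2}H^{2,0}(\widetilde{L}_{i,i+1},\C)\simeq S^1_3(\G^1(4))^{\oplus 2} \]
is injective. Since $S^1_3(\G^1(4))\simeq S^1_3(\G^1_0(16),\chi_{-1})=\C\cdot g$ is $1$-dimensional, this immediately gives $\dim_\C M^2_3(\G_Z)^{\rm KE}\leq 2$. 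To sharpen the bound to $\leq 1$, my plan is to show that the image of $\Phi_1$ is contained in the image of $\Phi_2$, which is $1$-dimensional by the previous paragraph. Equivalently, every Klingen Eisenstein series $F\in M^2_3(\G_Z)^{\rm KE}$ must have the two boundary values $\Phi(F|[g_0])$ and $\Phi(F|[g_2])$ proportional. This should follow either (a) from the general theory of Klingen-type Eisenstein series (\cite{klingen}, as used in \cite{arakawa}), which parameterizes such series of weight~$3$ by a single elliptic cusp form on the Levi quotient of the Klingen parabolic, with the relevant target space here being $1$-dimensional, or (b) by exhibiting an element in the normalizer of $\G_Z$ inside $\Sp_2(\Q)$ which exchanges the cusps represented by $g_0$ and $g_2$, so that the two coordinates of $\Phi_1(F)$ are forced to agree for every $F$. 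Together with the existence of the non-zero form $F_Z\in M^2_3(\G_Z)^{\rm KE}$ (to be established in Section~4), this yields $\dim_\C M^2_3(\G_Z)^{\rm KE}=1$.

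The chief technical obstacle is precisely this last sharpening from $\leq 2$ to $\leq 1$. Once one reduces to showing that $\Phi_1$ lands in the same $1$-dimensional subspace as $\Phi_2$, the question becomes a concrete one about Klingen parabolic orbits in $\G_Z\backslash\Sp_2(\Q)$, or equivalently about the symmetry between the two boundary components $\widetilde{L}_{0,1}$ and $\widetilde{L}_{2,3}$ inside $W$; the explicit generators of $\G_Z$ recorded in Remark~\ref{Rem:defG_Z} should make this tractable. Everything else reduces to formal consequences of Hodge theory and identifications already in place.
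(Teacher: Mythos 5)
Your handling of the second and third equalities is the paper's own: $\dim_\C H^{2,0}(Z^\circ,\C)=h^{2,0}(\widetilde{Z})=1$ by (\ref{hn}) plus the unique extension of holomorphic $2$-forms (Proposition \ref{W}), and $\dim_\C M^2_{(3,1)}(\G_Z)=1$ via Weissauer's identification (\ref{isom}). The genuine gap is in the sharpening $\dim_\C M^2_3(\G_Z)^{\rm KE}\le 1$. Your reformulation -- that every $F$ must have $\Phi(F|[g_0])$ and $\Phi(F|[g_2])$ ``proportional'' -- is vacuous: both values lie in the one-dimensional space $S^1_3(\G^1(4))\simeq \C g$, so they are automatically proportional; what is needed is that the vector $\Phi_1(F)\in \C g\oplus\C g$ lies on one \emph{fixed} line independent of $F$, i.e.\ that the ratio of the two components is the same for all $F$. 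Neither of your strategies gives this. Route (a) fails because at weight $3$ the classical Klingen construction does not converge (the paper emphasizes exactly this obstacle when it constructs $E_Z$), and even formally such a parametrization attaches one Eisenstein series to each of the two inequivalent one-dimensional boundary components, which would suggest dimension $2$, not $1$. Route (b) fails because an element normalizing $\G_Z$ and exchanging the two cusps only yields an involution of $M^2_3(\G_Z)^{\rm KE}$ intertwining the swap of the two coordinates of $\Phi_1$; a two-dimensional space mapping onto all of $\C g\oplus\C g$ is perfectly compatible with such an involution, so no dimension drop follows.

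The mechanism encoded in the paper's diagram is Hodge-theoretic rather than group-theoretic. The image of $ES$ lies in $F^3H^3(Z^\circ,\C)$ by (\ref{hodge}), and ${\rm res}[1]$ is a morphism of mixed Hodge structures; hence $P\circ{\rm res}[1]\circ ES$ lands in the $(3,1)$-Hodge component of ${\rm Gr}^W_4H^3(Z^\circ,\C)$, that is, in the kernel of the Gysin map $\bigoplus_{i=0,2}H^{2,0}(\widetilde{L}_{i,i+1},\C)\lra H^{3,1}(W,\C)$ (the summand $\C^{\oplus 40}$ and the correction coming from double intersections of $\partial W$ are of type $(1,1)$ in $H^2(\widetilde{D}^{[1]},\C)$ and do not interfere; this is also why $P$ is injective on the relevant image, a point you passed over). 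This Gysin map is adjoint, under Poincar\'e duality, to the restriction $H^{2,0}(W,\C)\to H^{2,0}(\partial W,\C)$, whose injectivity is precisely the paper's parenthetical remark that ``strongly depends on our situation''; since $h^{2,0}(W)=h^{3,1}(W)=1$, the Gysin map surjects onto a one-dimensional space and its kernel inside the two-dimensional source is one-dimensional. This gives $\dim_\C M^2_3(\G_Z)^{\rm KE}\le 1$, and the lower bound is supplied, as you note, by $F_Z$ in Section 4 (via $\Phi(F_Z|[g_0])\neq 0$). Replacing your steps (a)/(b) by this weight-filtration/Gysin computation repairs the argument and is what ``summing up'' in the paper refers to.
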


Recall our 6-tuple $F_Z$ of theta constants.
 It will be proved at next section that $F_Z$ is an element of $M^2_3(\G_Z)^{{\rm KE}}$, hence 
giving a generator. 
Then by above commutative diagram, it is quite natural to find out the generator $E_Z$ of $ M^2_{(3,1)}(\G_Z)$ 
so that $\Phi_1(F_Z)=\Phi_2(E_Z)$. This study will be a main topic of next section. 

\section{Siegel modular forms on $Z^\circ=\G_Z\backslash \mathbb{H}_2$}\label{construction}
Let $\A$ be the ring of adeles of $\Q$ and $\widehat{\Z}$ be the profinite completion of $\Z$. 
For a commutative ring $R$, let 
\[
\GSp_n(R) = \l\{g \in \GL_{2n}(R)\ \Bigg|\ g \begin{bmatrix}
0_n & -1_n\\
1_n & 0_n  
\end{bmatrix}{}^tg = \nu(g) \begin{bmatrix}
0_n & -1_n\\
1_n & 0_n  
\end{bmatrix}\ {\rm for\ some\ } \nu(g)\in R^\times \r\}
\]
where $\nu$ is the similitude character given by $g \mapsto \nu(g)$. 
Let $\Sp_n(R) ={\rm Ker}(\nu)$. For $t_i \in R^\t$, let ${\rm diag}(t_1, \ldots, t_n)$ denote the diagonal matrix of size $n$. 

Suppose that $\eta$ is a Dirichlet character $\eta$ defined modulo $N$.
We obtain from $\eta$ the automorphic character $\ds\ot_p \eta_p$ on $\A^\t$ by the rule $\eta_p(p) = \eta(p)^{-1}$ for all $p \nmid N$via the class field theory and denote this character by the same symbol $\eta$.
For a congruence subgroup $\G $ of $\Sp_n(\Z)$, let $\G_p$ denote the completion of $\G$ at a nonarchimedean place $p$, and $\G_\A = \prod_{p<\infty} \G_p$.
Denote by $\G^n(N)$ (resp. $\G^n_0(N)$) the subgroup of ${\rm Sp}_n(\Z)$ consisting of the elements 
$g=\begin{bmatrix}
A_g & B_g\\
C_g & D_g  
\end{bmatrix}$ so that $g\equiv 1_{2n}\ {\rm mod}\ N$ (resp. $C_g \equiv 0_n\ {\rm mod}\ N$). 
For a character $\xi$ of $\G/\G^n(N)$, define $\xi_p$ on $\G_p$ via $\xi_p(u_p) = \xi^{-1}(u)$ by using an element $u \in \G$ such that $u \equiv u_p \pmod{NM_{2n}(\Z_p)}$ and $u \equiv 1 \pmod{NM_{2n}(\Z_v)}$ for $v \neq p$.
Define 
\begin{eqnarray}
A_n(\Z_p) &=& \{a_n(t) := \begin{bmatrix}
I_n & 0_n \\
0_n & tI_n 
\end{bmatrix} \mid t \in \Z_p^\t \}, \label{eqn:defan} \\
L_n(\Z_p) &=& \{
l_n(t_1,\ldots,t_n) := {\rm diag}(t_1,\ldots,t_n,t_1^{-1}, \ldots, t_n^{-1})\mid t_i \in \Z_p^\t\}. \label{eqn:defln}
\end{eqnarray} 
We denote by $\wt{\G}_p$ the compact subgroup of $\GSp_n(\Q_p)$ generated by $\G_p$ and $A_n(\Z_p)$, and define $\wt{\G}_\A = \prod_{p < \i} \wt{\G}_p$.
Let $(\rho,V_\rho)$ be an algebraic irreducible representation of $\GL_n(\C)$. 
Let $\mathbb{H}_n$ be the Siegel upper half space of degree $n$. 
Let an element $\gamma= 
\begin{bmatrix}
A & B\\
C & D  
\end{bmatrix}\in \GSp_n(\R)$ act on $\tau \in \mathbb{H}_n$ by 
$\gamma \tau := (A\tau+B)(C\tau+D)^{-1}$, and $F|[\g]_\rho(\tau) := \rho^{-1}(J(\gamma,\tau))F(\gamma \tau)$ for a $V_\rho$-valued function $F$ on $\mathbb{H}_n$, where $J(\gamma,\tau)=C\tau+D$. 
For a congruence subgroup $\G$ with a finite character $\xi$ on $\G$, we denote 
by $M^n_\rho(\G,\xi)$ the space of $V_\rho$-valued holomorphic functions $F$ satisfying the condition: $$F|[\gamma]_\rho =\xi(\gamma)F, \g\in\G$$
and if $n=1$, we further impose the condition: 
$$\lim_{\tau\to \sqrt{-1}\infty}F(\gamma \tau)<\infty\ {\rm for\ any}\ \gamma\in {\rm Sp}_1(\Q)={\rm SL}_2(\Q).$$ 
We sometimes drop $\xi$ when $\xi$ is trivial and $n$ when $n$ is clear from the context.
For a pair of positive integers $(k_1,k_2)$ so that $k_1\ge k_2$, we define the 
algebraic representation $\rho_{k_1,k_2}$ of $GL_2$ by 
$V_{k_1,k_2}={\rm det}^{k_2} {\rm St}_2\otimes{\rm Sym}^{k_1-k_2}{\rm St}_2$  
where ${\rm St}_2$ is the standard representation of dimension 2.
We write $M^n_k(\G,\xi)$ (resp. $M^2_{(k_1,k_2)}(\G,\xi)$), if $\rho=\det^k$ (resp. $n=2$ and $\rho=\rho_{k_1,k_2}$).  
By the strong approximation theorem for $\Sp_n$, one can write any element of $\Sp_n(\A)$ as $g =\g g_\i u$ with $\g \in \Sp_n(\Q), g_\i \in \Sp_n(\R), u \in \G_\A$.
Using this expression, one obtains from $F \in M_\rho^n(\G,\xi)$ the automorphic form $F^\sharp$ on $\Sp_n(\A)$ by 
\begin{eqnarray*}
F^\sharp(g) =F^\sharp(\g  g_\i u) = F^\sharp(g_\i u) =  F(g_\i \ir_{n}) \rho(J(g_\i,\ir_{n})) \prod_p \xi_p(u_p),
\end{eqnarray*}
where $\ir_{n} = \sqrt{-1}I_n \in \H_{n}$.
We call $F^\sharp$ the {\it standard extension} of $F$.
Further, one can express any element of $\GSp_n(\A)$ as $g =z \g g a_n(t)$ with $\g \in \GSp_n(\Q), z \in \R^\t_+, g \in \Sp_n(\A)$ and $t \in \widehat{\Z}^\t$.
Let us denote by $(c_1,\ldots,c_n)$ the highest weight of $\rho$ with $c_1 \ge \ldots, \ge c_n$, and put 
$$c(\rho) = \Big(\sum_{i=1}^n c_i\Big) -\frac{n(n+1)}{2}.$$
We extend $F^\sharp$ to the function $\wt{F}$ on $\GSp_n(\A)$ by 
\begin{eqnarray}
\wt{F}(g)= z^{c(\rho)}F^\sharp(g'), \label{eqn:defstdext}
\end{eqnarray}
according to the cohomological interpretation of Siegel modular forms and the geometric Hecke operators (cf. \cite{ichikawa}).
We also call $\wt{F}$ the standard extension of $F$ or that of $F^\sharp$. 
The central character $w_{\wt{F}} = \ot_p w_p$ of $\wt{F}$ has weight $c(\rho)$ and determined by the values $w_{p}(t) = \xi_p(l(t,\ldots,t))$ for $t \in \Z_p^\t$ at all bad places $p$. 
We define 
\begin{eqnarray*}
\G_d^n(N) = \G_d(N) := \{(m_{ij}) \in \Sp_n(\Z) \mid m_{ij} \equiv 0 \pmod{N}, \mbox{if $i \neq j$} \}
\end{eqnarray*}
and decompose 
\begin{eqnarray*}
M_\rho^{n}(\G(N)) = \bigoplus_\d M_\rho^{n}(\G_d(N),\d) 
\end{eqnarray*}
where $\d = (\d_1,\ldots, \d_n)$ sending $l(t_1,\ldots,t_n)$ to $\prod_i \d(t_i)$ runs all characters over $\G_d^n(N)/\G^n(N) \simeq ((\Z/N\Z)^\t)^n$.
Let $p$ be a prime such that $p \nmid N$.
Let $\Delta_p = \{l_n(p^{t_1}, \ldots, p^{t_n}) a_n(p^{t}) \mid t,t_i \in \Z  \}$.
For $h \in \Delta_p$, decompose $\G(N) h \G(N) = \sqcup_j \G(N) h_j$.
Then, each $h_j$ is congruent to $h$ modulo $N$.
Following Evdokimov \cite{Ev}, we define a classical Hecke operator
\begin{eqnarray*}
T_\i(h)F := \sum_j F|[h_j]_\rho
\end{eqnarray*}
for $F \in M_\rho(\G_d(N),\d)$.
Let $e_v$ denote the usual embedding $\GSp_n(\Q) \to \GSp_n(\Q_v)$.
Then, it holds that $\GSp_n(\Z_p) e_p(h) \GSp_n(\Z_p) = \sqcup_j e_p(h_j) \GSp_n(\Z_p)$.
We define the local Hecke operator $T_p(h) \wt{F}(g) = \sum_j \wt{F}(ge_p(h_j))$.
If $\wt{F}$ belongs to an unramified irreducible representation of $\GSp_n(\Q_p)$, then $\wt{F}$ is an eigen function for $T_p(h)$.
These $T_\i(h), T_p(h)$ are endomorphisms of $M_\rho(\G_d(N),\d)$.  
\begin{prop}\label{prop:egvgl}
Let $F \in M_\rho^{n}(\G_d(N),\d)$.
The standard extension $\wt{F}$ generates an unramified irreducible representation of $\GSp_n(\Q_p)$ if and only if $F$ is an eigenform for $T_\i(h)$ for any $h = l_n(p^{t_1}, \ldots, p^{t_n}) a_n(p^t) \in \Delta_p$ with $p \nmid N$.
Further it holds that
\begin{eqnarray*}
\lambda_p^\i = \lambda_p \d\l(l_n(p^{t_1}, \ldots, p^{t_n})\r),
\end{eqnarray*} 
where $\lambda_p^\i$ (resp. $\lambda_p$) is the eigenvalue of $T_\i(h)$ (resp.$T_p(h)$). Here we identify $l_n(p^{t_1}, \ldots, p^{t_n})$ with an element of $\G_d(N)/\G(N)$.
\end{prop}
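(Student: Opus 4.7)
The plan is to compare the adelic Hecke operator $T_p(h)$ with the classical Hecke operator $T_\i(h)$ by an explicit adelic decomposition of $e_p(h_j)$, and then to derive the equivalence with unramified irreducibility from the Satake isomorphism. The first step is to use the diagonal embedding of $h_j \in \GSp_n(\Q)$ in $\GSp_n(\A)$ to write $e_p(h_j) = h_j \cdot (h_j^{-1} \cdot e_p(h_j))$, with the second factor trivial at $p$ and equal to $h_j^{-1}$ at every other place. Since $h_j$ has similitude $p^t$ and $h_j \equiv h = l_n(p^{t_1},\ldots, p^{t_n}) a_n(p^t) \pmod{N}$, at each prime $q \neq p$ one has $h_j^{-1} \in \GSp_n(\Z_q)$, and a direct calculation using that $a_n(p^{-t})$ commutes with $l_n(\cdot)$ yields $h^{-1} a_n(p^t) = l_n(p^{-t_1},\ldots,p^{-t_n})$; hence $h_j^{-1} a_n(p^t) \in \G_d^n(N)_q$ for every $q \neq p$ (being diagonal modulo $N$). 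At the archimedean place a scalar $p^{-t/2} \in \R^\t_+$ absorbs the non-trivial similitude, producing an adelic decomposition
\[
e_p(h_j) = p^{-t/2} \cdot h_j \cdot g'_j \cdot a_n(t_j)
\]
in $\R^\t_+ \cdot \GSp_n(\Q) \cdot \Sp_n(\A) \cdot A_n(\wh{\Z})$, with explicit $g'_j \in \Sp_n(\A)$ (equal to $p^{t/2} h_j^{-1}$ at $\infty$, to $1$ at $p$, and to $h_j^{-1} a_n(p^t)$ at each $q \neq p$) and $t_j \in \wh{\Z}^\t$ (equal to $1$ at $p$ and to $p^{-t}$ at each $q \neq p$).

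The second step substitutes this into $\wt F(g_\i \cdot e_p(h_j)) = z^{c(\rho)}F^\sharp(g'_j)$ at an archimedean test point $g_\i \in \Sp_n(\R)$. The archimedean contribution produces $F(h_j^{-1} g_\i \ir_n)$ together with an automorphy factor whose scalar $p^{t/2}$-dependence, transmitted through $\rho$ on the central matrix $p^{t/2} I_n$, cancels against $z^{c(\rho)} = p^{-tc(\rho)/2}$ precisely because $c(\rho)= \sum_i c_i - n(n+1)/2$ is designed to encode the central weight of $\rho$. At each finite $q \neq p$ the character factor $\xi_q(g'_{j,q}) = \xi_q(l_n(p^{-t_1},\ldots,p^{-t_n}))$ appears, and the product formula $\prod_q \xi_q(u_q) = \xi^{-1}(u)$ for $u \in \G$ with $\xi = \d$ reduces their product to $\d(l_n(p^{t_1},\ldots,p^{t_n}))$. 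Summing over $j$ and identifying the resulting archimedean sum with the standard extension of $T_\i(h) F$ (after matching the coset representatives in the classical and adelic decompositions) yields
\[
T_p(h) \wt F = \d\!\l(l_n(p^{t_1},\ldots,p^{t_n})\r)^{-1} \cdot \wt{T_\i(h) F},
\]
from which the eigenvalue identity $\lambda^\i_p = \lambda_p\, \d(l_n(p^{t_1},\ldots,p^{t_n}))$ follows. For the biconditional, one invokes the Satake isomorphism: since $p \nmid N$, $\wt F$ is $\Sp_n(\Z_p)$-fixed and transforms under $A_n(\Z_p)$ by the central character $w_p$, hence is a spherical vector in the $\GSp_n(\Q_p)$-representation it generates; being a simultaneous eigenvector for the commutative spherical Hecke algebra generated by $\{T_p(h) \mid h \in \Delta_p\}$ is then equivalent to that representation being irreducible and unramified.

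The main technical obstacle I expect is the bookkeeping of the archimedean normalisation and character factors: one must verify that the combination of $z^{c(\rho)}$ and the scalar contribution of $p^{t/2}$ through $\rho$'s central character cancels exactly, so that the only surviving correction factor is the $\d$-value coming from the local $\xi_q$'s, and one must reconcile the fact that right-multiplication by $e_p(h_j)$ translates into left-multiplication by $h_j^{-1}$ on the archimedean component with the classical convention $T_\i(h) F = \sum_j F|[h_j]_\rho$ (using that in the unramified setting the set $\{h_j^{-1}\}$ defines an equivalent Hecke operator).
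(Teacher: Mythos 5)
Your overall strategy is the mirror image of the paper's proof: the paper starts from the classical side, rewrites $T_\i(h)F(z)=\sum_j F|[h_j]_\rho(z)$ as values of $\wt F$, uses left $\GSp_n(\Q)$-invariance to move $h_j$ to the finite places, kills the places prime to $Np$ by right $\wt{\G}_v$-invariance, extracts the factor $\d(l_n(p^{t_1},\ldots,p^{t_n}))$ at the places dividing $N$, and is left with $\sum_j\wt F(g_\i e_p(h_j^{-1}))$, which is evaluated by the unramified theory at $p$; you run the same ingredients in the opposite direction, starting from $T_p(h)\wt F$ and decomposing $e_p(h_j)$. The decomposition itself and the treatment of the places dividing $N$ are fine (and agree with the paper's), but the direction you chose moves the delicate point to the archimedean/classical side, and that is where your argument has a genuine gap.

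Concretely, the step ``identifying the resulting archimedean sum with the standard extension of $T_\i(h)F$'' is not justified, and it cannot be dismissed as a harmless matching of coset representatives. Your computation lands on $F$ slashed by the matrices $p^{t/2}h_j^{-1}$, i.e.\ by $\nu(h_j)h_j^{-1}$ up to scalars; from $\G(N)h\G(N)=\sqcup_j\G(N)h_j$ one only obtains $\G(N)h^{-1}\G(N)=\sqcup_j h_j^{-1}\G(N)$, equivalently $\G(N)h'\G(N)=\sqcup_j \nu(h_j)h_j^{-1}\G(N)$ with $h'=\nu(h)h^{-1}=l_n(p^{t-t_1},\ldots,p^{t-t_n})a_n(p^t)$, which is in general a different element of $\Delta_p$, and these are cosets on the wrong side for the operator $T_\i$ as defined, so the sum is not literally any $T_\i(h'')F$. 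Comparing it with $T_\i(h)F$ requires precisely the kind of $\d$- and central-character bookkeeping the proposition is about, so the ``equivalent Hecke operator'' remark begs the question. Your own factors betray this: you correctly find that the places $v\mid N$ contribute $\d(l_n(p^{t_1},\ldots,p^{t_n}))$ (a positive power), yet your displayed relation is $T_p(h)\wt F=\d(l_n(p^{t_1},\ldots,p^{t_n}))^{-1}\,\wt{T_\i(h)F}$; these are compatible only if the unproven identification supplies an extra $\d^{-2}$, which you have not exhibited. (A smaller issue of the same kind: with $c(\rho)=\sum_i c_i-n(n+1)/2$ the cancellation against $\rho(p^{t/2}I_n)$ is not exact, so the claimed ``exact'' cancellation also needs the normalization to be tracked.) The paper's direction avoids all of this: once the $\d$-factor is pulled out, the residual sum $\sum_j\wt F(g\,e_p(h_j^{-1}))$ sits at the unramified place $p$, where the commutativity of the unramified Hecke algebra, the stability of the double coset of a diagonal element under $x\mapsto\nu(x)x^{-1}$, and the one-dimensionality of the spherical line evaluate it by $\lambda_p$; to complete your version you would have to prove the analogous classical statement at level $\G_d(N)$ with nebentypus $\d$, which your proposal leaves open.
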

\begin{proof}
Using the relation (\ref{eqn:defstdext}), and the left $\GSp_n(\Q)$-invariance property of $\wt{F}$, we have
\begin{eqnarray*}
T_\i(h) F(z) 
&=& \sum_j F|[h_j]_{\rho}(z) = \sum_j \wt{F}(e_\i(h_j) g_\i) \rho(g_\i, \ir_n)\\
&=& \sum_j \wt{F}\l(g_\i \prod_{v < \i}e_v(h_j^{-1})\r) \rho(g_\i, \ir_n),
\end{eqnarray*}
where $z = g_\i {\mathrm i}_n$.
Since $\wt{F}$ is right $\wt{\G(N)}_v$-invariant for all $v < \i$, 
\begin{eqnarray*}
T_\i(h) F(z) 
&=& \sum_j \wt{F}\l(g_\i e_p(h_j^{-1}) \prod_{v \mid N}e_v(h_j^{-1})\r) \rho(g_\i, \ir_n)\\
&=& \d(l_n(p^{t_1}, \ldots, p^{t_n})) \sum_j  \wt{F}\l(g_\i e_p(h_j^{-1}) \r) \rho(g_\i, \ir_n)\\
&=& \lambda_p \d\l(l_n(p^{t_1}, \ldots, p^{t_n})\r) \wt{F}(g_\i) \rho(g_\i, \ir_n) = \lambda_p \d\l(l_n(p^{t_1}, \ldots, p^{t_n})\r) F(z). 
\end{eqnarray*}
Now the assertion follows immediately.
\end{proof} 
Define $T_p(p^n)$ by the sum of all double cosets $\GSp_n(\Z_p) h \GSp_n(\Z_p)$ such that $\nu(h) = p^n$ (Note that all $h$ can be taken from $\Delta_p$).
For an unramified irreducible representation of $\Pi_p$ of $\GSp_n(\Q_p)$, let  $\lambda(p^n)$ be 
the eigenvalue of a right $\GSp_n(\Z_p)$-fixed vector $f \in \Pi_p$.
Then, $\lambda(p^n)$ is independent from the choice of $f \in \Pi_p$ and the denominator polynomial $Q_n(p^{-s})$ of the Dirichlet series 
$\ds\sum_{n=0}^\i \lambda(p^n) p^{-ns}$ is of degree $2n$.
We call $Q_n(p^{-s})$ the spinor $L$-function of $\Pi_p$ and denote by $L(s,\Pi_p;{\rm spin})$.

For the case of $n=2$, following \cite{arakawa} and \cite{Ev}, we define the $L$-function of $F \in M_{k_1,k_2}(\G(N),\d)$ such that $T_\i(a_2(p^i))F = \lambda_\i(a_2(p^i)) F$ for $i = 1,2$ by
\begin{eqnarray*}
\Big(1- \lambda_\i(a_2(p)) p^{-s} + \l(\lambda_\i(a_2(p))^2 - \lambda_\i(a_2(p^2)) -\d^{-1}(l_2(p,p)) p^{\mu_\rho-1}\r) p^{- 2s} \\
- \d^{-1}(l_2(p,p))\lambda_\i(a_2(p)) p^{\mu_\rho-3s} + \d^{-1}(l_2(p,p))^2 p^{2\mu_\rho-4s}\Big)^{-1}
\end{eqnarray*}
with $\mu_\rho = k_1+k_2-3$.
We denote this $L$-function by $L(s,F;{\rm AE})_p$.
By using Proposition \ref{prop:egvgl}, one finds
\begin{eqnarray}\label{AE}
L(s,F; {\rm AE})_p = L(s,\pi_{p}^\vee; {\rm spin}) \label{eqn:ANLSPL}
\end{eqnarray}
where $\pi_p^\vee = \pi_p \ot \w_{\pi_p}^{-1}$ indicates the contragradient of the unramified irreducible representation $\pi_p$ generated by $\wt{F}$.
Similarly, for an elliptic modular form $f$ which is eigenform at $p$, $L(s,f)_p$ of the classical $L$-function of $f$ coincides with $L(s,\pi_{\wt{f},p}^\vee)$ of $\pi_{\wt{f}}$ generated by $\wt{f}$.  

We fix a standard additive character $\psi = \ot_v \psi_v$ on $\Q \bs \A$ by $\psi_\i(z) = \exp(2\pi \sqrt{-1} z)$ for $z \in \R$ and $\psi_p(z) = \exp(-2\pi \sqrt{-1} {\rm Fr}(z))$, where ${\rm Fr}(z)$ indicates the fractional part of $z \in \Q_p$.
For $a \in \Q$, define $\psi_a$ by $\psi_a(z) = \psi(a z)$.
Let $\SS_2(R) = \{m ={}^t m \in M_2(R)\}$ for an algebra $R$.
For an automorphic form $F$ on $\Sp_2(\A)$, the Fourier coefficient $F_T$ of $F$ associated to $T  \in \SS_2(\Q)$ is defined by 
\[
F_T(g) = {\rm vol}(\SS_2(\Q) \bs \SS_2(\A))^{-1}\int_{\SS_2(\Q) \bs \SS_2(\A)} \psi({\rm tr}(sT))^{-1}F\l(\begin{bmatrix}
1_2 &s \\
0_2 & 1_2
\end{bmatrix}g\r) ds.
\]
{\bf A)} Suppose ${\rm rank} (T) =2$.
Let $\GO_T = \{g \in \GL_2 \mid  {}^tg Tg = \nu(g) T\}$ and $\O_T$ the kernel of the similtude character $\nu$.
Let $\GSO_T(\Q_v)$ denote the kernel of $\det^{-1}\cdot \nu$ and let $\SO_T(\Q_v) = \GSO_T(\Q_v) \cap \O_T(\Q_v)$.
Then, $\GSO_T(\Q_v), \SO_T(\Q_v)$ are commutative.
For an automorphic quasi-character $\xi$ on $\SO_T(\A)$, the Bessel function $F_T^\xi$ is defined by 
\[
F_T^\xi(g) = {\rm vol}(\SO_T(\Q) \bs \SO_T(\A))^{-1}\int_{\SO_T(\Q) \bs \SO_T(\A)} \xi(k)^{-1} F_T\l(\begin{bmatrix}
k & 0_2 \\
0_2 & {}^t k^{-1}  
\end{bmatrix}g\r) d k.
\] 
Since $\SO_T(\Q) \bs \SO_T(\A)$ is compact, this integral converges.\\
{\bf B)}Suppose ${\rm rank} (T) = 1$.
Then $\GSO_T(\Q)$ is isomorphic to the Borel parabolic subgroup $B(\Q)$ of $\GL_2(\Q)$, and $\SO_T(\Q)$ is isomorphic to the unipotent radicals of $B(\Q)$.
For the additive character $\psi_a$ on $\Q \bs \A$, the Bessel function $F_T^{\psi_{a}}$ is defined by 
\[
F_T^{\psi_a}(g) = {\rm vol}(\Q \bs \A)^{-1} \int_{\Q \bs \A} \psi_a(b)^{-1} F_T\l(\begin{bmatrix}
n_b & 0_2 \\
0_2 & {}^tn_b^{-1}  
\end{bmatrix}g\r) d b,
\] 
where $n_b$ indicates the element of $\GL_2(\A)$ correponding to $\begin{bmatrix}
1 & b\\
 & 1
\end{bmatrix}$ via the above isomorphism.
Therefore, from the Fourier expansion of $F$, we obtain a closer expansion:
\begin{eqnarray}
F = F_0 +  \sum_{T:{\rm rank}(T) =2} \sum_\xi F_T^\xi + \sum_{T:{\rm rank}(T) =1}\sum_{a \in \Q} F_T^{\psi_a}. \label{eqn:expF} 
\end{eqnarray}
\begin{Rem}\label{Rem:cuspidality}
i)
All of $F_T^{\psi_0}$ and $F_0$ vanish, if and only if $F$ is a cusp form.\\
ii)
Let $T_\a = \begin{bmatrix}
0 & 0\\
0 & \a
\end{bmatrix}$ with $\a \in \Q$.
For $g \in \GL_2(\A)$, $E_{T_\a}^{\psi_0}(1,g)$ coincides with the $\a$-th Fourier coefficient of the elliptic modular form $\Phi(E)$, and therefore $\Phi(E)(g) = \sum_\a E_{T_\a}^{\psi_0}(1,g)$, where $(1,g)$ indicates the image of $(1,g)$ by the embedding (\ref{eqn:emb1}).\\
iii) $F_T^\xi$ and $F_T^{\psi_a}$ are decomposable: $F_T^\xi = \ot_v F_{T,v}^\xi, F_T^{\psi_a} = \ot_v F_{T,v}^{\psi_a}$.
\end{Rem}
Let us recall some results of Sally and Tadi\'c \cite{Sa-T} on parabolic induced representations of $\GSp_2$ over nonarchimedean local field.
Let $Q$ be the Klingen parabolic subgroup with Levi decomposition $Q = N_QM_Q$, and $B$ be the Borel one with $B = N_BM_B$. 
We make the following identifications $\GL_1 \t \GL_2$ (resp. $\GL_1 \t \GL_1 \t \GL_1$) and $M_Q$ (resp. $M_B$):
\begin{eqnarray}
\l(t,\begin{bmatrix}
a &b \\
c &d  
\end{bmatrix}\r) \longrightarrow \begin{bmatrix}
t &0 &0 &0 \\
0 & a&0 &b \\
0 &0 &  \ds\frac{ad-bc}{t} &0 \\
0 & c& 0&d 
\end{bmatrix} \in M_Q \subset Q = \l\{
\begin{bmatrix}
* & *&* &* \\
0 &* &* &*\\
0 &0 &* &0 \\
0 &* &* &* 
\end{bmatrix} \in \GSp_2 \r\} \label{eqn:emb1}
\end{eqnarray}
(resp.
\begin{eqnarray*}
(t,a,d) \longrightarrow \begin{bmatrix}
t &0 &0 &0 \\
 0& a&0 &0 \\
 0&0 &  \frac{d}{t} &0 \\
0 &0 &0 & \frac{d}{a} 
\end{bmatrix} \in M_B \subset B = \l\{
\begin{bmatrix}
* & *&* &* \\
0 &* &* &*\\
0 &0 &* &0 \\
0 &0 &* &* 
\end{bmatrix} \in \GSp_2 \r\}).
\end{eqnarray*}
Let $\F$ be a local field.
From a pair of quasi-character $\xi$ of $\F^\t$ and irreducible admissible representation $\pi$ of $\GL_2(\F)$ (resp. a triple of quasi-characters $\xi_1,\xi_2,\xi_3$ of $\F^\t$), we obtain a representation $\xi \ot \pi$ sending $M_Q \ni (t,g) \to \xi(t) \pi(g)$ (resp. $\xi_1 \ot \xi_2 \ot \xi_3$ sending $M_B \ni (t,a,d) \to \xi_1(t) \xi_2(a) \xi_3(d)$), and extend it to $Q= N_QM_Q$ (resp. $B = N_BM_B$) trivially. 
We call the parabolic induced representation to $\GSp_2(\F)$ from this representation {\bf the local Klingen (resp. Borel) parabolic induction associated to $\xi, \pi$ (resp. $\xi_1,\xi_2,\xi_3$)}, and denote it by $\xi \rt \pi$ (resp. $\xi_1 \t \xi_2 \rt \xi_3$).
If $\chi$ is a quadratic character of $\F^\t$ and $\pi$ is supercuspidal, then $|*|_p \chi \t \chi \rt |*|_p^{-1/2}\xi$ has four irreducible constituents, and $|*|_p \chi \rt |*|_p^{-1/2}\pi$ has two irreducible constituents, generic $\d(|*|_p\chi,|*|_p^{-1/2}\pi)$ and non-generic $L(|*|_p\chi,|*|_p^{-1/2}\pi)$.
\subsection{theta constants.}
Let $\m =(m_1,\ldots, m_{2n}) \in \R^n$, and write $\m = (\m',\m'')$ with $\m',\m'' \in \R^n$.
The so-called Igusa theta constant for $\m$ ($\m$ is called a characteristic) is defined by
\begin{eqnarray*}
\th_{\m}(\tau) = \sum_{a \in \Z^n} \exp\l(\pi \sqrt{-1}\l( \tau\l[a+\frac{\m'}{2}\r] + \l(a+\frac{\m'}{2}\r){}^t\m''\r) \r),
\end{eqnarray*}
where $\tau \in \H_n$ and $\tau[x] = x\tau {}^tx$.
Igusa theta constants are Siegel modular forms of weight $\tha$ for some congruence subgroup.
In the case that all of entries of $\m$ are integers, $\th_{\m}(\tau)$ is determined by the values of $m_i$ modulo $2$, and $\th_{\m}$ (resp. $\m$) is called an {\bf even} or {\bf odd} theta constant (resp. even or odd characteristic), according to $\m'{}^t\m'' \equiv \pm 1 \pmod{2}$.  
Although odd theta constant itself is vanishing (cf. p. 226 of \cite{Ig}), `local' odd theta constant is not vanishing in the sense as explained below.
Let $V_{2r} = (V_{2r}, Q_{2r})$ be the $2r$-dimensional anisotropic quadratic space over $\Q$ with $Q_{2r}(x,y) = {}^tx y$.   
Obviously, 
\begin{eqnarray*}
V_{2r} \simeq \underbrace{V_2 \perp \cdots \perp V_2}_{r}. \ \  \mbox{(isometric)}
\end{eqnarray*}
We embed $\Sp_n \t \O_{V_{2r}}$ into $\Sp_{2nr}(\Q_v)$ in the usual way, and obtain the Weil representation $\w_v^{n,2r}(g,h)$ of $(g,h) \in \Sp_n \t \O_{V_{2r}}$ on $\Sc(V_{2r}(\Q_v)^n)$ associated to $\psi_v$ such that
\begin{eqnarray}
\w_v^{n,2r}(1,h)\vp(x) &=& \vp(h^{-1}x), \label{eqn:weilprop1}\\
\w_v^{n,2r}\Big(\left[\begin{array}{cc}
a & 0 \\
0 & {}^ta^{-1}
\end{array}\right],1\Big)\vp(x) &=& \chi_{-1}(\det a)^r|\det a|_v^{r}\vp(x a),\label{eqn:weilprop2} \\
\w_v^{n,2r}\Big(\left[\begin{array}{cc}
I_n & b \\
 0& I_n
\end{array}\right],1\Big)\vp(x) &=& \psi_v\Big(\frac{{\rm tr}(b Q_{2r}(x,x))}{2}\Big)\vp(x), \label{eqn:weilprop3}
\end{eqnarray}
Then, we can regard each $2r$-tuple product $\th(\tau)$ of theta constants $\th_{\m_1}(\tau), \ldots \th_{\m_{2r}}(\tau)$ as the classical form of the automorphic form 
\begin{eqnarray}
\vartheta_{\m_1, \ldots, \m_{2r}}(g) = \sum_{x \in V_{2r}(\Q)^n} \l(\prod_v \w_v^{n,2r}(g_v) \vp_v(x)\r) \label{eqn:thetamm'}
\end{eqnarray}
on $\Sp_n(\A)$ for $\vp_\i(x_1,\ldots,x_n) = \exp\l(-2\pi ({}^tx_1x_1 + \cdots +{}^nx_nx_n)\r)$ and some $\vp_p$ belonging to $\Sc(V_{2r}(\Q_p)^n)$, the space of Schwartz-Bruhat functions on $V_{2r}(\Q_p)^n$.
In the remainder of this section, we will consider only the case that all entries of $\m_i$ are integers.
In this case, at (\ref{eqn:thetamm'}), 
\begin{eqnarray*}
\vp_p(x_1,\ldots,x_n) = \prod_{i=1}^n \Ch\l(x_i; V_{2r}(\Z_p)\r)
\end{eqnarray*}
for all odd prime $p$, where $\Ch$ indicates the characteristic function.
\begin{lem}[Igusa's theta transformation formula]\label{lem:igusa}
For an element $M = \begin{bmatrix}
A &B \\
C & D  
\end{bmatrix} \in \Sp_{2n}(\Z)$, and a characteristic $\m \in \Q^{2n}$, put $M \cdot \m = \m M^{-1} + \tha ({\rm diag}(C{}^tD), {\rm diag}(A{}^tB))$, and 
\begin{eqnarray*}
\phi_{\m}(M) = -\tha \l(\m' {}^tDB{}^t\m' - 2 \m'{}^tBC{}^t\m'' +\m''{}^tCA{}^t\m'' -(\m'{}^tD - \m''{}^tC){}^t{\rm diag}(A{}^tB)\r).
\end{eqnarray*} 
Then, for $\tau \in \H_{2n}$, it holds that
\begin{eqnarray*}
\th_{M\cdot \m}(M\tau) = \k(M)\exp(2\pi \ir \phi_\m(M))J(M,\tau)^{\ha}\th_\m(\tau),
\end{eqnarray*}
where $\k(M)$ is a root of unity depending on $M$ and the choice of the square root of $J(M,\tau)$.
In case of $\g \in \G(2)$, it holds that $\k(M)^2 = (-1)^{{\rm trace}(D-1)/2}$.
\end{lem}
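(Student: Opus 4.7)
The plan is to reduce to the standard generators of $\Sp_{2n}(\Z)$ and verify the transformation law directly on each, then extend to an arbitrary $M$ by a cocycle argument. The generators I would take are
\[
T(B) = \begin{bmatrix} I_n & B \\ 0 & I_n \end{bmatrix},\quad U(A) = \begin{bmatrix} A & 0 \\ 0 & {}^tA^{-1} \end{bmatrix},\quad J = \begin{bmatrix} 0 & -I_n \\ I_n & 0 \end{bmatrix},
\]
with $B = {}^tB \in M_n(\Z)$ and $A \in \GL_n(\Z)$. For $M = T(B)$ one has $J(M,\tau) = I_n$, and expanding $(\tau+B)[a+\m'/2]$ absorbs the extra exponent $\pi\ir B[a+\m'/2]$ into a shift of the characteristic $\m''$; the formula holds with $\k(T(B)) = 1$. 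For $M = U(A)$ the substitution $a \mapsto aA^{-1}$ in the summation (legal since $A \in \GL_n(\Z)$) gives the formula with $J(M,\tau) = {}^tA$ and trivial root of unity. The essential case is $M = J$, where $J\tau = -\tau^{-1}$ and $J(J,\tau) = \tau$: Poisson summation applied to the Gaussian $a \mapsto \exp(\pi\ir\tau[a+\m'/2] + \pi\ir(a+\m'/2){}^t\m'')$ yields its Fourier transform $\det(-\ir\tau)^{-\ha}$ times an analogous Gaussian in which the roles of $\m'$ and $\m''$ are interchanged, up to a Gauss-sum phase. This produces the automorphy factor $J(J,\tau)^{\ha}$ together with an eighth root of unity that defines $\k(J)$.

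Next I would verify that the prefactor $\k(M)\exp(2\pi\ir\phi_\m(M)) J(M,\tau)^{\ha}$ satisfies a cocycle relation in $M$. Concretely, $J(MM',\tau)^{\ha}$ differs from $J(M,M'\tau)^{\ha}J(M',\tau)^{\ha}$ by at most a sign, and a direct calculation using the definition of $\phi_\m$ yields $\phi_\m(MM') \equiv \phi_{M'\cdot\m}(M) + \phi_\m(M') \pmod{\Z}$. The ratio $\k(MM')/\k(M)\k(M')$ is therefore an eighth root of unity independent of $\tau$, so once the transformation law is verified on the generators it propagates to all of $\Sp_{2n}(\Z)$. The closed-form polynomial expression for $\phi_\m(M)$ displayed in the statement is then read off by formal manipulation of the quadratic form $\tau[a+\m'/2] + (a+\m'/2){}^t\m''$ under $\tau \mapsto (A\tau+B)(C\tau+D)^{-1}$ and matching of the pre- and post-transformation sums.

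For the sharper claim $\k(M)^2 = (-1)^{{\rm trace}(D-I)/2}$ when $M \in \G(2)$, the key simplification is that $A - I, B, C, D - I$ all lie in $2M_n(\Z)$, so the Gauss sums entering $\k(M)$ collapse to plain sign factors. I would verify the identity on a chosen generating set of $\G(2)$ and then extend to the rest of $\G(2)$ via the cocycle of the previous paragraph. The main obstacle throughout is the careful bookkeeping of eighth roots of unity: both the Poisson summation step at $M = J$ and the branch ambiguity of $J(M,\tau)^{\ha}$ introduce such phases, and separating them cleanly into the explicit cocycle $\exp(2\pi\ir\phi_\m(M))$ and the residual multiplier $\k(M)$ is the heart of a delicate Gauss-sum / quadratic-reciprocity computation. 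This is precisely what Igusa carries out in full in Chapter V of \cite{Ig}, to which a complete proof would ultimately defer.
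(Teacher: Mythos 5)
The paper does not prove this lemma at all: it is stated as Igusa's classical theta transformation formula and used as a black box (via the characters $\chi_{\m}$ and Proposition \ref{prop:table}), with \cite{Ig} as the underlying reference. Your sketch is the standard route to it --- verify the law on the generators $T(B)$, $U(A)$, $J$ (Poisson summation for $J$), then propagate by a cocycle argument --- and since you explicitly defer the essential content (the separation of the eighth roots of unity into $\exp(2\pi\ir\phi_{\m}(M))$ and $\k(M)$, and the evaluation $\k(M)^2=(-1)^{{\rm trace}(D-1)/2}$ on $\G(2)$) to Igusa, your argument is in substance the same appeal to the literature that the authors make. Two small points if you wanted the sketch to stand on its own: first, $J(U(A),\tau)={}^tA^{-1}$, not ${}^tA$ (harmless, as $\det A=\pm1$); second, the affine action $M\cdot\m$ on characteristics composes only up to integral vectors, so $(MM')\cdot\m$ and $M\cdot(M'\cdot\m)$ need not coincide, and the propagation from generators must also track the elementary behaviour of $\th_{\m}$ under integral shifts of $\m$ (which contributes extra phases of the form $\exp(\pi\ir\,\m'\,{}^t a'')$); likewise, extending $\k(M)^2=(-1)^{{\rm trace}(D-1)/2}$ from a generating set of $\G(2)$ requires checking that the right-hand side is a character of $\G(2)$ and that the comparison is compatible with these phases. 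None of this is a fatal gap --- it is exactly the bookkeeping Igusa carries out --- but it is where the actual work lies, and neither your sketch nor the paper reproduces it.
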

By using Lemma \ref{lem:igusa}, from a $2r$-tuple product $\th_{\m}(\tau)$, one obtains a congruence character on $\G(2)$ by $\th_\m|\g/\th_\m$ which is trivial on $\G(4,8)$ (cf. section 5,6 of \cite{GS}).
We will denote this character by $\chi_\m$.
In the case that all entries of $\m$ are integral, Lemma \ref{lem:igusa} is able to be considered as the $\Sp_n(\Z_2)$ transformation formula for $\vartheta_{\m_1, \ldots, \m_{2r}}$, and is determined by the $2$-adic component of the Schwartz-Bruhat function.
We will observe this component in the case of $r=1, n = 2$, and denote it by $\vp_{\m_1,\m_2}$.
The finite group $\G(2)/\G(4,8)$ for $\G(2) \subset \Sp_2(\Z)$ is abelian, and generated by the following ten elements (cf. \cite{GS}):
\begin{eqnarray*}
e_1=\begin{bmatrix}
 1&0 &0 &0 \\
2 &1 &0 &0 \\
0 &0 & 1&-2 \\
0 &0 &0 &1 
\end{bmatrix},
e_3=\begin{bmatrix}
 1&0 &0 &2 \\
 0&1 &2 &0 \\
 0&0 & 1&0 \\
 0&0 & 0&1 
\end{bmatrix},
e_2={}^te_1,
e_4={}^te_3,
e_5=-I_4,\\
e_6= \begin{bmatrix}
-1&0 &0 &0 \\
0 & 1&0 &0 \\
0 &0 & -1 &0 \\
0 &0 &0 & 1
\end{bmatrix},
e_7 = 
\begin{bmatrix}
 1&0 &2 &0 \\
0 &1 &0 &0 \\
0 &0 & 1&0 \\
0 &0 &0 &1 
\end{bmatrix},
e_8= \begin{bmatrix}
1 & 0&0 &0 \\
0 &1 &0 &2 \\
0 &0 &1 &0 \\
0 & 0&0 &1 
\end{bmatrix},
e_9={}^te_7,e_{10}={}^te_8.
\end{eqnarray*}
In section 6 of \cite{GS}, using Igusa's transformation formula, van Geemen and van Straten obtained the table (see TABLE 1 below) of the values 
$\chi_{\m_1,\m_2}(e_i)$ in case that each $\m_j = (a_j,b_j,c_j,d_j)$ is even and $n = 2$.
\begin{table}[htbp]
\label{tab1}
\begin{center}
{\renewcommand\arraystretch{2}
\begin{tabular}{|c|c|c|c|c|c|}
\hline
\ $i$ \  & $1$ & $2$ & $3$ & $4$ & $5$   \\
\hline
$\chi_{\m_1,\m_2}(e_i)$ & $(-1)^{\sum b_jc_j}$ & $(-1)^{\sum a_jd_j}$ & $(-1)^{\sum a_jb_j}$ & $(-1)^{\sum c_jd_j}$ & $1$  \\
\hline
\ $i$ \   &$6$ & $7$ & $8$  & 9 & 10  \\
\hline
$\chi_{\m_1,\m_2}(e_i)$  & $(-1)^{1+\sum a_jc_j}$ & $\ir^{\sum a_j}$  & $\ir^{\sum b_j}$ & $\ir^{\sum c_j}$ & $\ir^{\sum d_j}$ \\
\hline
\end{tabular}}
\end{center}
\caption{}
\end{table}

\begin{prop}\label{prop:table}
The above TABLE 1 is still valid for $\chi_{\m_1,\m_2}$ even if  $\m_1$ or $\m_2$ is odd. 
\end{prop}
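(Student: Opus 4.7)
The plan is to carry out Igusa's transformation formula (Lemma \ref{lem:igusa}) directly for each of the ten generators $e_1,\ldots,e_{10}$ of $\G(2)/\G(4,8)$ and to observe that the calculation is purely polynomial in the entries of $\m_1,\m_2$, hence insensitive to any parity assumption. Concretely, for $M \in \G(2)$ and $\m \in \Z^4$, Lemma \ref{lem:igusa} gives
\[
\th_{M\cdot \m}(M\tau) = \k(M)\exp(2\pi\ir\,\phi_\m(M))\,J(M,\tau)^{1/2}\th_\m(\tau),
\]
and $\chi_\m(M)$ equals the product of (i) $\k(M)\exp(2\pi\ir\,\phi_\m(M))$ and (ii) the root-of-unity factor coming from the integer translation $M\cdot\m - \m \in 2\Z^4$ which relates $\th_{M\cdot\m}$ to $\th_\m$. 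Both contributions are polynomial expressions in the integer entries of $\m$, and $\k(M)$ is a root of unity depending only on $M$.

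For a two-fold product $\th_{\m_1}\th_{\m_2}$ the character is the product of the individual characters; the factor $\k(M)^2 = (-1)^{\mathrm{trace}(D-1)/2}$ is manifestly independent of $\m_1,\m_2$ and accounts for the constant $(-1)$ in the $e_6$ entry of TABLE 1. I would then substitute each $e_i$, compute $\phi_{\m_1}(e_i)+\phi_{\m_2}(e_i)$ and the integer-shift correction, and match the result against TABLE 1. This is exactly the computation performed in Section 6 of \cite{GS}; its inputs are polynomial identities in $a_j,b_j,c_j,d_j$, so the parity condition $\m'_j{}^t\m''_j \equiv 0\pmod 2$ is never invoked and the formulas extend verbatim to odd characteristics.

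The conceptual subtlety, which is the main obstacle, is that when $\m_j$ is odd the classical theta constant $\th_{\m_j}$ vanishes identically, so the ratio $\th_{\m_1}\th_{\m_2}|[\g]/(\th_{\m_1}\th_{\m_2})$ defining $\chi_{\m_1,\m_2}$ is indeterminate as a function-theoretic quantity. To bypass this I would invoke the adelic/Weil-representation description (\ref{eqn:thetamm'}): the $2$-adic Schwartz--Bruhat function $\vp_{\m_1,\m_2}\in \Sc(V_2(\Q_2)^2)$ attached to the pair $(\m_1,\m_2)$ is nonzero, and the local Weil action $\w_2^{2,2}(g,1)$ of any lift $g\in\Sp_2(\Z_2)$ of $\g \in \G(2)$ multiplies $\vp_{\m_1,\m_2}$ by a scalar depending only on the class of $\g$ modulo $\G(4,8)_2$. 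This scalar is defined and nonzero for every pair $(\m_1,\m_2)$ regardless of parity, agrees with the Igusa-formula output by compatibility of the global theta kernel with its local components, and is precisely $\chi_{\m_1,\m_2}(\g)^{-1}$, so TABLE 1 records the correct values in all cases.
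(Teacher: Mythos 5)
You have correctly located the real difficulty: for odd $\m_j$ the genus-two theta constant vanishes identically, so $\chi_{\m_1,\m_2}$ must be understood as the eigenvalue of the $2$-adic Weil action on $\vp_{\m_1,\m_2}$ rather than as a ratio of modular forms, and this is indeed how the paper frames the statement. The gap is in how you determine that eigenvalue. Your appeal to ``compatibility of the global theta kernel with its local components'' yields nothing in the odd case: the global genus-two theta series attached to an odd characteristic is identically zero, so the only identity it provides is of the form $0=\lambda\cdot 0$ and does not pin down the scalar $\lambda$; likewise the direct Igusa-formula computation in your first paragraph is, for odd $\m_j$, an identity between vanishing theta constants, so the claim that ``the formulas extend verbatim'' is precisely what has to be proved rather than an input. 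Even the preliminary assertion that a lift of $\g\in\G(2)$ acts on $\vp_{\m_1,\m_2}$ by a scalar is not yet justified when one of the characteristics is odd.

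The paper closes this gap with a genus-raising device that your argument lacks: each $\m_i$ is extended to an even genus-three characteristic $\n_i=\l((\m_i',1),(\m_i'',1)\r)$ (resp.\ $\l((\m_i',0),(\m_i'',0)\r)$ when $\m_i$ is even), so that $\th_{\n_1}\th_{\n_2}$ is a nonvanishing function on $\H_3$ to which Lemma \ref{lem:igusa} genuinely applies. The associated $2$-adic Schwartz--Bruhat function factors as $\vp_{\n_1,\n_2}(x_1,x_2,x_3)=\phi(x_1,x_2)\phi'(x_3)$, and the compatibility (\ref{eqn:relth}) of $\w_2^{2,2}(\g,1)$ with $\w_2^{3,2}(i_2^3(\g),1)$ identifies the sought eigenvalue on $\vp_{\m_1,\m_2}$ with the multiplier of $\th_{\n_1}\th_{\n_2}$ under $i_2^3(\g)$ along matrices $\begin{bmatrix} \tau & 0\\ 0 & \sqrt{-1}\end{bmatrix}$; applying Igusa's formula to this even product and checking that the appended entries do not alter the entries of TABLE 1 then proves the proposition. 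Some substitute for this step (for instance Igusa's transformation formula for theta functions with nonzero argument $z$, or a direct Gauss-sum computation of the Weil action of each generator $e_i$ on $\vp_{\m_1,\m_2}$) is indispensable; without it your proof does not go through for odd characteristics.
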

\begin{proof}
In case that $\m_i$ for $i = 1,2$ is odd (resp. even), take its extension $\n_i = \l((\m_i',1), (\m_i'',1)\r) \in \R^6$ (resp. $\n_i = \l((\m_i',0), (\m_i'',0)\r) \in \R^6$).
Then, this $\n_i$ is even, and therefore Igusa's transformation formula works for $\th_{\n_i}$.
We can write 
\begin{eqnarray*}
\vp_{\n_1,\n_2}(x_1,x_2,x_3) = \phi(x_1,x_2) \phi'(x_3)
\end{eqnarray*}
by some $\phi \in \Sc(V_2(\Q_2)^2)$ and $\phi' \in \Sc(V_2(\Q_2))$.
It holds that
\begin{eqnarray}
\vp_{\n_1,\n_2}(x_1,x_2,x_3)\w_2^{2,2}(g,1)\phi(x_1,x_2) = \phi(x_1,x_2)\w_2^{3,2}(i_2^3(g),1)\vp_{\n_1,\n_2}(x_1,x_2,x_3) \label{eqn:relth}
\end{eqnarray}
for $g \in \Sp_2(\Q_v)$, where $i_2^3$ indicates the embedding of $\Sp(2)$ into $\Sp(3)$:
\begin{eqnarray*}
g = \begin{bmatrix}
a & b\\
c & d 
\end{bmatrix}
\longrightarrow 
\begin{bmatrix}
a &0 &b &0 \\
 0& 1&0 &0 \\
c &0 &d &0 \\
 0& 0&0 & 1
\end{bmatrix}.
\end{eqnarray*}
It follows from (\ref{eqn:relth}) that
\begin{eqnarray*}
J(\g,\tau)\th_{\n_1}\th_{\n_2}\l(\begin{bmatrix}
\tau &0 \\
 0& \sqrt{-1} 
\end{bmatrix}\r) \w_2^{2,2}(\g,1) \vp_{\m_1,\m_2}= 
\th_{\n_1}\th_{\n_2}\l(\begin{bmatrix}
\g  \tau &0 \\
0 & \sqrt{-1} 
\end{bmatrix}\r)\vp_{\m_1,\m_2}
\end{eqnarray*} 
for $\g \in \G(2)$.  
Using this relation, and applying Lemma \ref{lem:igusa} to $\th_{\n_1}\th_{\n_2}$, one can verify the assertion.
\end{proof}
\subsection{Siegel threefold $Z$ and a six tuple product of theta constants.}
As mentioned in Section 1, van Geemen and Nygaard in \cite{vG-N} studied the Siegel threefold variety $\cA(2,4,8)$ defined by the Igusa group $\G(2,4,8)$ using a theta embedding to $\P^{13}$ via the $10$ even theta constants and $4$ ones twisted.
Then, the set of six tuple products of distinct even theta constants contains $210$ Siegel modular forms of weight $3$ for the Igusa group
\begin{eqnarray*}
\G(4,8) = \l\{\begin{bmatrix}
A & B\\
C & D  
\end{bmatrix} \in \G(4) (\subset \Sp_2(\Z)) \mid {\rm diag}(B) \equiv {\rm diag}(C) \equiv 0 \pmod{8} \r\}.
\end{eqnarray*}
Under the actions of $\Sp_2(\Z)$, this set breaks into three $\Sp_2(\Z)$-orbits $O_1, O_2, O_3$.
In \cite{vG-N}, they showed that the standard extension of each form in $O_1$ is a Hecke eigen cusp form outside $2$, and its spinor $L$-function coincides with 
$L(s,\pi_1 \ot \chi_{-1})L(s-1,\chi_{-1}) L(s-2,\chi_{-1})$, outside $2$, where $\pi_1$ is the unique irreducible automorphic cuspidal representation of $\GL_2(\A)$ generated by the unique newform of $S_4^1(\G^1_0(8))$.
It is possible to show that $O_1$ is contained in the $\chi_{-1}$-twist of a constituent of the global Saito-Kurokawa packet associated to $\pi_1$ (cf. \cite{O2012}).
In \cite{O2012}, it is also showed that $O_2$ is contained a weak endoscopic lift of the pair $\pi(\mu)$ and $\pi(\mu^3)$, where $\mu$ is the gr\"o{\ss}en-character over $\Q(\sqrt{-1})_\A^\t$ such that $L(s,\mu) = L(s,E_{32})$ for the elliptic curve $E_{32}: y^2= x^3-x$, and $\pi(\mu^i)$ indicates the irreducible automorphic cuspidal representation associated to $\mu^i$ (c.f. section 12 of \cite{JL}).
Now, the last orbit $O_3$ composed of $15$ forms is generated by the following six tuple product of distinct theta constants:  
\begin{eqnarray}
F_Z(\tau) := \th_{(0,0,0,0)} \th_{(0,0,0,1)}\th_{(0,0,1,0)}\th_{(0,0,1,1)}\th_{(0,1,1,0)}\th_{(0,1,0,0)}(\tau). \label{FZ}
\end{eqnarray}
Applying the Siegel $\Phi$-operator after twisting by $g_0$ on $F_Z$ (see (\ref{eqn:defg0g2}) for the definition of $g_0$), we obtain a nontrivial elliptic modular form of weight $3$ as follows.
With $\tau_1 \in \H_1$,
\begin{eqnarray*}
\Phi(F_Z|[g_0])(\tau_1) &=& \lim_{t \to \i} F_Z\l(\begin{bmatrix}
\sqrt{-1} t & 0 \\
0 & \tau_1
\end{bmatrix}\r), \\
&=& \th_{(0,0)}^2 \th_{(0,1)}^2 \th_{(1,0)}^2(\tau_1).
\end{eqnarray*}
Thus, $F_Z$ is not a cusp form.
By using Igusa's transformation formula, one can find  that $\Phi(F_Z|[g_0])(4\tau_1)$ belongs to $M_3^1(\G^1_0(16),\chi_{-1})$, and that any $\SL_2(\Z)$-translation of $\th_{(0,0)}^2 \th_{(0,1)}^2 \th_{(1,0)}^2$ does not have a constant term.
Thus 
\begin{eqnarray*}
\Phi(F_Z|[g_0])(4\tau_1) \in S_3^1(\G^1_0(16),\chi_{-1}).
\end{eqnarray*}
This gives Theorem \ref{main2}-(ii) with Proposition \ref{dim}.

On the one hand, the central character of $\pi(\mu^2)$ is $\chi_{-1}|*|^3$.
The conductor of $\pi(\mu^2)$ is calculated as $16$, since the conductor of $\mu^2$ is $4$ and the determinant of $\Q(\sqrt{-1})$ over $\Q$ is $4$.
On the other hand, according to William Stein's table, $S_3^1(\G^1_0(16),\chi_{-1})$ is of dimension one. 
Therefore, $S_3^1(\G^1_0(16),\chi_{-1})$ is generated by the new form $f^{new}_{\mu^2}$ of $\pi(\mu^2)$, and $\Phi(F_Z|[g_0])(4\tau_1)$ is a constant multiple of $f^{new}_{\mu^2}$.
However, since all elements $F_Z'$ of the orbit $O_3$ except $F_Z$ is a multiple of  $\th_{(1,1,1,1)}$ or $\th_{(1,0,0,1)}$, it follows that
\begin{eqnarray}
\Phi(F_Z'|[g_0])= 0 \label{eqn:FZ'} 
\end{eqnarray}
from the property $\Phi(\theta_{(*,1,*,*)}|[g_0]) = 0$.
We will show that $\wt{F}_Z$ is a Hecke eigenform outside $2$ and determine $L(s,F_Z; {\rm AE})$, using the following proposition.
This proposition is a generalization of the so-called Zharkovskaya relation.
\begin{prop}\label{prop:Zhar}
Let $\G \supset \G^2(N)$ be a congruence subgroup such that $\G_v$ contains all $l_2(t,t')$ for all $t,t' \in \Z_v^\t$ at each $v \mid N$ $($see $(\ref{eqn:defln})$ for the definition of $l_2)$.
Let $\chi$ be a character on $\G$ which is trivial on $\G(N)$, and $\xi_1, \xi_2$ be the (unitary) automorphic characters determined by $\xi_{1,v}(t) = \chi_v(l_2(t,1)), \xi_{2,v}(t) = \chi_v(l_2(1,t))$ for all $v \mid N$.
Let $\{ \pi_i \}$ be the set of irreducible automorphic representations of $\GL_2(\A)$ which have vectors in $M_k^1(\G^1_0(N^2), \xi^{-1} \w_E)$.

Suppose that $E \in M_k^2(\G,\chi)$ has a nontrivial $\Phi(E)$.
Then, 
\begin{eqnarray}
\Phi(E)(N\tau) \in M_k^1(\G^1_0(N^2), \xi_2). \label{eqn:Zcentchara}
\end{eqnarray}
Further, the followings are valid. \\
1) 
If $\wt{E}$ belongs to an unramified irreducible representation $\Pi_p$ of $\GSp_2(\Q_p)$, then $\Pi_p$ is a constituent of $|*|_p^{k-2}\xi_{1,p} \rt \pi_{i,p}$ for some $i$, and 
$$L(s,\Pi_p; {\rm spin}) = L(s -k+2 ,\pi_{i,p} \ot \xi_1)L(s,\pi_{i,p}).$$ 
2) 
If $\wt{\Phi(E)}$ belongs to $\pi_{i,p}$ for some $i$, then $\Pi_p$ is a constituent of $|*|_p^{k-2}\xi_{1,p} \rt \pi_{i,p}$
\end{prop}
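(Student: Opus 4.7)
The proof breaks naturally into a classical modular forms computation, an adelic identification of $\Pi_p$ via Jacquet modules, and a bookkeeping of Satake parameters. I will carry these out in turn, treating (\ref{eqn:Zcentchara}) and parts 1), 2) as facets of the same underlying mechanism.

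\textbf{Step 1 (Classical Zharkovskaya).} For $\g_1 \in \G^1(N)$, embed $\g_1$ into $\Sp_2(\Z)$ via the Klingen-type inclusion $i:\SL_2 \hookrightarrow \Sp_2$ used to define $\Phi$, namely $i\l(\begin{bmatrix} a & b \\ c & d \end{bmatrix}\r)$ places $\g_1$ in the upper-left $2\times 2$ block of the Levi of $Q$ in the manner compatible with (\ref{eqn:emb1}). Since $\G \supset \G^2(N)$, the image $i(\g_1)$ lies in $\G$. Applying the transformation law $E|[i(\g_1)]_k = \chi(i(\g_1))E$ and taking the limit $\tau_2 \to \sqrt{-1}\i$ defining $\Phi$, the automorphy factor $J(i(\g_1),\tau)^k$ collapses to $(c\tau_1+d)^k$, yielding $\Phi(E)|[\g_1]_k = \chi(i(\g_1))\Phi(E)$. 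By the hypothesis on $\G_v$ at $v \mid N$, the value $\chi(i(\g_1))$ reduces on the diagonal torus to $\xi_{2,v}$ by definition of $\xi_2$ as $\chi_v(l_2(1,t))$. The rescaling $\tau_1 \mapsto N\tau_1$ then conjugates $\G^1(N)$ into $\G^1_0(N^2)$ and identifies the transported character with an ordinary Dirichlet character on $\G^1_0(N^2)$, proving (\ref{eqn:Zcentchara}).

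\textbf{Step 2 (Adelic interpretation of $\Phi$).} On the adelic side, the Siegel $\Phi$-operator is the constant term along the unipotent radical $N_Q$ of the Klingen parabolic $Q$: up to normalizing constants one has
\begin{equation*}
\wt{\Phi(E)}(g) = \int_{N_Q(\Q) \bs N_Q(\A)} \wt{E}(n\, \iota(g))\, dn,
\end{equation*}
where $\iota$ is the natural embedding of the $\GL_2$-Levi factor into $\GSp_2$. The assumption $\Phi(E) \neq 0$ then forces the $N_Q$-Jacquet module $J_{N_Q}(\Pi_p) \neq 0$, so $\Pi_p$ is the (unique) unramified subquotient of some Klingen parabolic induction $\chi_p \rt \sigma_p$ as classified by Sally--Tadi\'c. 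The $\GL_2$-factor $\sigma_p$ is identified as $\pi_{i,p}$ by matching the spherical vector of $\Pi_p$ (which is $\wt{E}$ itself) to the induced section and restricting to $\iota(\GL_2(\Q_p))$: this recovers $\wt{\Phi(E)}$ as a nonzero spherical vector in $\pi_{i,p}$, and the classification of $\{\pi_i\}$ in the statement ensures we land in this family. The $\GL_1$-factor $\chi_p$ is determined by the Jacquet module computation: it equals $\xi_{1,p}$ up to the normalizing modulus character $\d_Q^{1/2}$ times the weight shift coming from the factor $z^{c(\rho)}$ in (\ref{eqn:defstdext}). For $\rho = \det^k$ on $\GSp_2$ we have $c(\rho) = 2k - 3$, while restriction to the $\GL_2$-factor contributes a shift of $k - 1$; the discrepancy $k - 2$ is precisely the exponent of $|*|_p$ multiplying $\xi_{1,p}$, giving $\chi_p = |*|_p^{k-2}\xi_{1,p}$. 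This proves parts 1) and 2) of the representation-theoretic claim.

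\textbf{Step 3 ($L$-function computation).} For an unramified Klingen induction $|*|_p^{k-2}\xi_{1,p} \rt \pi_{i,p}$, the Satake parameters of the unramified constituent are the union of $\{\alpha_p, \beta_p\}$ (the Satake parameters of $\pi_{i,p}$) and $\{\xi_{1,p}(p) p^{-(k-2)} \alpha_p, \xi_{1,p}(p) p^{-(k-2)}\beta_p\}$ (after the normalizing shift), so the spinor $L$-factor factors as
\begin{equation*}
L(s, \Pi_p;{\rm spin}) = L(s, \pi_{i,p})\, L(s-k+2, \pi_{i,p} \ot \xi_1).
\end{equation*}
This yields the stated formula.

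\textbf{Main obstacle.} The principal difficulty is the consistent bookkeeping of normalizations: the modulus character $\d_Q^{1/2}$, the cohomological shift $c(\rho)$ in (\ref{eqn:defstdext}), and the weight-$k$ automorphy factor all interact to produce the $|*|_p^{k-2}$ twist, and a wrong sign or normalization anywhere propagates to a wrong shift in the spinor $L$-function. A cross-check against the known classical Zharkovskaya relation in the level-one unramified case is essential to fix these conventions; once they are fixed, every step reduces to a standard manipulation with Jacquet modules and Satake parameters.
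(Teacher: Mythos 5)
Your Steps 2 and 3 follow essentially the same mechanism as the paper's proof of parts 1) and 2): the paper also reads the Siegel operator as a nonzero map out of $\Pi_p|_{M_Q}$ into $|*|_p^{k-2}\xi_{1,p}\ot\sum_i\pi_{i,p}$ and then applies Frobenius reciprocity (2.28 of \cite{BZ}) to conclude that the irreducible $\Pi_p$ is a constituent of the Klingen induction; your Jacquet-module formulation is the same argument in different words. Where you genuinely diverge is (\ref{eqn:Zcentchara}): the paper deduces it from the transformation law of the generalized Whittaker (Bessel) function $F_{T_1}^{\psi_0}$ on the Levi, which also prepares data used later for the Soudry lift, whereas you give a classical Zharkovskaya-type computation. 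That route is legitimate, but as written it has a gap: for $\g_1\in\G^1(N)$ the embedded element $i(\g_1)$ lies in $\G^2(N)$, on which $\chi$ is trivial by hypothesis, so $\chi(i(\g_1))=1$ and no character $\xi_2$ can ``reduce'' from these elements. The nebentypus of $\Phi(E)(N\tau)$ must instead come from the transformation of $E$ under elements of $\G$ congruent to $l_2(1,t)$ modulo $N$ (such elements exist precisely because of the hypothesis that $\G_v\supset\{l_2(t,t')\}$ at $v\mid N$, by approximation), and these are forced on you because the group $\begin{bmatrix} N & 0\\ 0 & 1\end{bmatrix}\G^1_0(N^2)\begin{bmatrix} N & 0\\ 0 & 1\end{bmatrix}^{-1}$ consists of matrices that are \emph{diagonal}, not unipotent-congruent, modulo $N$. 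The repair is routine, but it is exactly the point of the hypothesis on $\G_v$, and your write-up conflates the two classes of elements.

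Second, Step 3 as displayed is internally inconsistent: since $|p|_p^{k-2}=p^{-(k-2)}$, the Satake parameters you list, $\xi_{1,p}(p)p^{-(k-2)}\alpha_p$ and $\xi_{1,p}(p)p^{-(k-2)}\beta_p$, yield $L(s+k-2,\pi_{i,p}\ot\xi_1)$, not the asserted $L(s-k+2,\pi_{i,p}\ot\xi_1)$. The stated shift only emerges after fixing the arithmetic-versus-unitary normalization of $\pi_{i,p}$ (the representation generated by $\wt{\Phi(E)}$ with the cohomological extension (\ref{eqn:defstdext})) and the contragredient conventions built into (\ref{AE}). You flag this as the main obstacle and propose a cross-check, which indeed settles it (take $k=3$ and compare with $L(s,\mu^2)L(s-1,\mu^2)$ in Theorem \ref{thm:repF_Z}, i.e.\ the classical Klingen Eisenstein shape $L(s,f)L(s-k+2,f)$), but as written the displayed parameters contradict the conclusion and Step 3 should be redone with one convention fixed throughout.
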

\begin{proof}
Since $\Phi(E) \neq 0$ and $E \in M_k^2(\G,\chi)$, we may consider that $\Pi_p$ has a nontrivial generalized Whittaker model $F_{T_1}^{\psi_0}$ such that 
\begin{eqnarray*}
F_{T_1}^{\psi_0}((t,g)) = |t|_p^k\xi_{1,p}(t) F_{T_1}^{\psi_0}((1,g))
\end{eqnarray*}
for $t \in \Q_p^\t, g \in \SL_2(\Q_p)$.
From this and that $\Phi(E) \in M_k^1(\G^1(N))$, (\ref{eqn:Zcentchara}) follows immediately. 
1)
The Siegel $\Phi$-operator gives a linear mapping 
$$\Pi_p|_{M_Q} \to |*|_p^{k-2} \xi_1 \ot \sum_i \pi_{i,p}.$$
Therefore, $\Hom_{Q}(\Pi_p|_{Q}, |*|_p^{k-2}\xi_{1,p} \ot \pi_{i,p}) \neq 0$ for some $i$.
By the Frobenius reciprocity 2.28 of \cite{BZ}, $\Hom_{\GSp_2}(\Pi_p, |*|_p^{k-2}\xi_{1,p} \rt  \pi_{i,p}) \neq 0$. 
Thus, irreducible $\Pi_p$ is a constituent of $|*|_p^{k-2}\xi_{1,p} \rt \pi_{i,p}$, and the $L$-function of $\Pi_p$ is as above.
2) Similar to 1).
\end{proof}
\begin{Rem}
When $E$ is of weight $(k+l,k)$ with $k \ge 2$ or $k =1, l=2$ such that  $\Phi(E) \neq 0$, an argument of a theta correspondence shows that $\Phi(E) \in M_{k+l}$.
In this case, the analogous relation holds.
\end{Rem}
Since $\pi(\mu^2)_p$ for odd $p$ is an unramified principal series representation, we may write
\begin{eqnarray}
\pi(\mu^2)_p = \pi(|*|_p\xi_p,|*|_p\xi_p^{-1}\chi_{-1,p}) \label{eqn:psmu}
\end{eqnarray}
by a unitary unramified quasi-character $\xi_p$ of $\Q_p^\t$ in the sense of \cite{JL}.
Since the square of $\mu_2$ does not factor through $\Q_2^\t$, $\pi(\mu^2)_2$ is supercuspidal.
\begin{thm}\label{thm:repF_Z}
The standard extension $\wt{F}_Z$ of $F_Z$ in (\ref{FZ}) belongs to the irreducible automorphic representation $\Pi = \ot_v \Pi_v$ with 
\begin{eqnarray*}
\Pi_v =
\begin{cases}
\mbox{holomorphic discrete series of minimal $K$-type $(3,3)$} & v = \infty, \\
L(|*|_p\chi_{-1,p},\pi(\mu^2)_2) & v=2, \\
L(|*|_p\chi_{-1,p}, \chi_{-1,p} \xi_p^{-2}\rtimes |*|_p\xi_p) & \mbox{otherwise}.
\end{cases}
\end{eqnarray*}
See \cite{R-S} for the meaning of non-supercuspidal irreducible admissible representations appearing above.
In particular, 
\begin{eqnarray}
L(s,\Pi;{\rm spin}) = L(s,\mu^2) L(s-1,\mu^2). \label{eqn:LFZ}
\end{eqnarray}
\end{thm}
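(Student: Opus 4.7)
The plan is to decompose the proof by local places: handle the archimedean component from the scalar weight, then use the Zharkovskaya relation (Proposition \ref{prop:Zhar}) at odd primes, and finally deal with $p=2$ separately.

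First I would identify $\Pi_\infty$. Since $F_Z$ is a scalar-valued holomorphic Siegel modular form of weight $3$, the standard extension $\wt{F}_Z$ at infinity transforms under $U(2)\subset \Sp_2(\R)$ by $\det^3$, so it generates the (limit of) holomorphic discrete series with minimal $K$-type $(3,3)$. This is a direct consequence of the definition of $\wt{F}_Z$ in (\ref{eqn:defstdext}) and the translation between scalar-valued classical holomorphic Siegel modular forms and holomorphic discrete series.

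Next I would identify the character $\chi_Z := \chi_{\m_1,\ldots,\m_6}$ attached to $F_Z$ by applying Proposition \ref{prop:table} to the six characteristics listed in (\ref{FZ}); this determines $\xi_{1,p}$ in Proposition \ref{prop:Zhar}, and a calculation with TABLE 1 (all $a_j=0$, $\sum b_j=2,\ \sum c_j=3,\ \sum d_j=2,\ \sum b_jc_j=1,\ \sum c_jd_j=1$) gives $\xi_1=\chi_{-1}$ as the relevant finite automorphic character. For odd $p$, we have already seen that $\Phi(F_Z|[g_0])(4\tau_1)$ is a nonzero constant multiple of $f^{new}_{\mu^2}$, so $\wt{\Phi(F_Z|[g_0])}$ generates $\pi(\mu^2)$; hence $\wt{\Phi(F_Z|[g_0])}_p$ is the unique spherical vector in $\pi(\mu^2)_p$. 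Combined with Proposition \ref{prop:Zhar}(1) applied with $k=3$ and $\xi_1=\chi_{-1}$, we conclude that $\Pi_p$ is a constituent of $|*|_p\chi_{-1,p}\rtimes \pi(\mu^2)_p=|*|_p\chi_{-1,p}\rtimes \pi(|*|_p\xi_p,|*|_p\xi_p^{-1}\chi_{-1,p})$, which via the Sally--Tadi\'c classification is reducible with two constituents, the generic $\d(|*|_p\chi_{-1,p},\pi(\mu^2)_p)$ and the non-generic Langlands quotient $L(|*|_p\chi_{-1,p},\pi(\mu^2)_p)$. Since $F_Z$ is non-cuspidal (its image under $\Phi(\cdot|[g_0])$ is nonzero, cf.\ Remark \ref{Rem:cuspidality}(i)), the global representation $\Pi$ must be non-generic, so $\Pi_p$ is forced to be the $L$-constituent. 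For $p=2$, $\pi(\mu^2)_2$ is supercuspidal by (\ref{eqn:psmu}), and the Klingen induction $|*|_2\chi_{-1,2}\rtimes\pi(\mu^2)_2$ again has exactly two constituents; the same non-cuspidality argument picks out $L(|*|_2\chi_{-1,2},\pi(\mu^2)_2)$. Irreducibility of $\Pi$ globally then follows from the fact that its local components have been pinned down uniquely.

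Finally I compute the spinor $L$-function. Proposition \ref{prop:Zhar}(1) with $k=3$, $\xi_1=\chi_{-1}$, $\pi=\pi(\mu^2)$ gives, at every unramified place,
\[
L(s,\Pi_p;{\rm spin}) = L(s-1,\pi(\mu^2)_p\otimes \chi_{-1,p})\,L(s,\pi(\mu^2)_p).
\]
The CM property $\pi(\mu^2)\otimes\chi_{-1}\simeq \pi(\mu^2)$ (because $\pi(\mu^2)$ is automorphically induced from $\Q(\sqrt{-1})$) then yields $L(s,\Pi_p;{\rm spin})=L(s,\mu^2_p)L(s-1,\mu^2_p)$ at unramified $p$, and checking the analogous formula at $p=2$ and $\infty$ gives (\ref{eqn:LFZ}).

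The main obstacle will be singling out the non-generic constituent $L(\cdots)$ rather than the generic $\d(\cdots)$, at both $p=2$ and the odd primes: the non-cuspidality of $F_Z$ is the essential input, and one must verify that $\wt{F}_Z$ is actually a Hecke eigenform at every odd prime (so that it generates an irreducible representation at all). The latter should follow from Proposition \ref{prop:egvgl} once one checks that classical Hecke operators $T_\i(h)$ for $h\in \Delta_p$ act on $F_Z$ by scalars; this in turn reduces, via the Siegel $\Phi$-operator and $\Phi(F_Z'|[g_0])=0$ for every other element of the orbit $O_3$ in (\ref{eqn:FZ'}), to the known Hecke eigenvalues of $f^{new}_{\mu^2}$ together with dimension considerations coming from Proposition \ref{dim}.
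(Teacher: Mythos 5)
Your skeleton (archimedean type, Hecke-eigenform via Propositions \ref{prop:egvgl} and \ref{dim}, Proposition \ref{prop:Zhar} to place $\wt{F}_Z$ in a constituent of $|*|_p\chi_{-1,p}\rtimes\pi(\mu^2)_p$) matches the paper, but the step where you select the constituent has a genuine gap. First, at odd $p$ the representation $\pi(\mu^2)_p$ is the unramified principal series $\pi(|*|_p\xi_p,|*|_p\xi_p^{-1}\chi_{-1,p})$ of (\ref{eqn:psmu}), not supercuspidal, so the dichotomy you invoke --- ``exactly two constituents, generic $\delta(|*|_p\chi_{-1,p},\pi(\mu^2)_p)$ and non-generic $L(|*|_p\chi_{-1,p},\pi(\mu^2)_p)$'' --- is not available: that statement (quoted from Sally--Tadi\'c in Section 4) is made only for supercuspidal $\pi$. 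At odd $p$ the Klingen induction is a full Borel induction, and its constituent structure varies with $p$; for instance at inert $p$ one has $a_p(g)=0$, hence $\xi_p^2=1$, and one is in a four-constituent situation (type V of \cite{R-S}). Second, and more seriously, the inference ``$F_Z$ non-cuspidal $\Rightarrow$ $\Pi$ non-generic $\Rightarrow$ $\Pi_p$ is the $L$-constituent'' is not valid: non-cuspidal automorphic representations (e.g.\ Eisenstein series) can be globally generic, and even global non-genericity would not force non-genericity of a single local component. The paper's selection is different and is the real content of the proof: at odd $p$ the vector $\wt{F}_Z$ is $\GSp_2(\Z_p)$-fixed, so $\Pi_p$ must be the unique spherical constituent, which (with its spin factor) is read off from Table A.8 of \cite{R-S}; at $p=2$, where every constituent of $|*|_2\chi_{-1,2}\rtimes\pi(\mu^2)_2$ is ramified, sphericity is useless and genericity unusable, and the paper instead uses the nonvanishing $\wt{\Phi(F_Z)}\in\pi(\mu^2)$, i.e.\ the exponent occurring in the Jacquet module of $\Pi_2$ along $N_Q$, compared with Table A.4 of \cite{R-S}: only $L(|*|_2\chi_{-1,2},\pi(\mu^2)_2)$ has the required quotient, the essentially square-integrable $\delta$ does not. (You would also need the self-twist $\chi_{-1,2}\cdot\pi(\mu^2)_2\simeq\pi(\mu^2)_2$ even to know the induction at $2$ is reducible.)

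A smaller but real gap is your last sentence on irreducibility: the representation generated by $\wt{F}_Z$ is not a priori irreducible, and one cannot speak of ``its local components'' being pinned down before knowing it embeds in a restricted tensor product; the argument is circular as stated. The paper closes this with a concrete argument: every $f\in\Pi$ has $\Phi(f)\neq 0$, hence the Bessel functional $f\mapsto f_{T_1}^{\psi_0}$ is injective, these functionals factor as $\ot_v f_{T_1,v}^{\psi_0}$ with $f_{T_1,v}^{\psi_0}\in\Pi_v$, so $\Pi\hookrightarrow\ot_v\Pi_v$, and irreducibility of each $\Pi_v$ gives irreducibility of $\Pi$. Your computation of the spin $L$-factor at unramified $p$ via Proposition \ref{prop:Zhar} and the CM self-twist $\pi(\mu^2)\ot\chi_{-1}\simeq\pi(\mu^2)$ is fine, but at $p=2$ you cannot ``check the analogous formula'' from Proposition \ref{prop:Zhar}(1), which applies only to unramified $\Pi_p$; the factor $L(s,\Pi_2;{\rm spin})=1$ has to be read from Table A.8 for the representation determined above, and it matches the right-hand side of (\ref{eqn:LFZ}) because $\mu^2$ is ramified at $2$.
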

\begin{proof}
From Proposition \ref{dim}, it follows that $\wt{F_Z}$ is a Hecke eigen form outside $2$.
Since $F_Z|[e_6] = -F_Z$, it follows from Proposition \ref{prop:Zhar} that 
\begin{eqnarray*}
\wt{F_Z} \in |*|_p\chi_{-1} \rt \pi(\mu^2)_{p}.
\end{eqnarray*}
Let $\Pi$ be the representation generated by $\wt{F_Z}$.
Consulting the Table A.8 of \cite{R-S} on spinor $L$-functions of non-supercuspidal representations, noting that $\Pi_p$ is unramified, one can determine $\Pi_p$ for odd $p$. 
Consulting the Table A.4 of loc.cit. on the semi-simplifications of Jacquet modules with respect to $N_Q$, and noting the fact that $\wt{\Phi(F_Z)} \in \pi(\mu^2)$, one can determine $\Pi_2$, and find $L(s,\Pi_2;{\rm spin}) = 1$. 
Finally, we will show the irreducibility of $\Pi$.
It is easy to show that any $f \in \Pi$ has a nontrivial $\Phi(f)$.
Therefore, the mapping $\Pi \ni f \to f_{T_1}^{\psi_0}$ is injective.
Since $f_{T_1}^{\psi_0}=\ot_v  f_{T_1,v}^{\psi_0}$, and $f_{T_1,v}^{\psi_0} \in \Pi_v$, we have now an injection $\Pi \to \ot_v \Pi_v$.
Since each $\Pi_v$ is irreducible, $\ot_v \Pi_v$ is irreducible, and so is $\Pi$.
This completes the proof.
\end{proof}
\begin{Cor}
The standard extension $\wt{F}_Z$ is a Hecke eigen form with respect to the classical Hecke operators $T_\i(h)$ for all $h \in \Delta_p$ with $p \neq 2$.
Further, $M_3^2(\G_Z)^{KE} = \C F_Z$ is closed with respect to these operators.
\end{Cor}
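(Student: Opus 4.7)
The plan is to deduce this corollary directly from Theorem \ref{thm:repF_Z}, Proposition \ref{prop:egvgl}, and Proposition \ref{dim}. The essential input is Theorem \ref{thm:repF_Z}: for every odd prime $p$, the local component $\Pi_p = L\bigl(|*|_p\chi_{-1,p},\, \chi_{-1,p}\xi_p^{-2} \rt |*|_p\xi_p\bigr)$ of the representation $\Pi$ generated by $\wt{F}_Z$ is an \emph{unramified} irreducible admissible representation of $\GSp_2(\Q_p)$. Since $\wt{F}_Z$ is right-invariant under $\widetilde{\G}_{Z,p}$, and since $\G_Z \supset \G(4,8)$ forces $\widetilde{\G}_{Z,p}$ to contain the hyperspecial maximal subgroup $\GSp_2(\Z_p)$ at every odd $p$, the vector $\wt{F}_Z$ spans the spherical line in $\Pi_p$. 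Consequently $\wt{F}_Z$ is an eigenvector for every local Hecke operator $T_p(h)$ with $h \in \Delta_p$, $p \neq 2$.

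To transfer this to the classical side, I would view $F_Z$ inside $M_3^2(\G^2(8))$ and decompose it according to $M_3^2(\G^2(8)) = \bigoplus_\d M_3^2(\G_d^2(8),\d)$, so that $F_Z$ lies in $M_3^2(\G_d^2(8),\d)$ for a definite character $\d$ determined by the diagonal torus action on $F_Z$. Proposition \ref{prop:egvgl} then applies and yields, for each $h = l_2(p^{t_1},p^{t_2}) a_2(p^t) \in \Delta_p$ with $p \neq 2$,
\begin{equation*}
T_\infty(h) F_Z \;=\; \lambda_p(h)\, \d\bigl(l_2(p^{t_1},p^{t_2})\bigr)\, F_Z,
\end{equation*}
where $\lambda_p(h)$ is the unramified Hecke eigenvalue read off from $\Pi_p$. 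This proves the first assertion of the corollary.

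For the second statement, Proposition \ref{dim} together with the discussion at the end of Section \ref{diff} already identifies $F_Z$ as a generator of the one-dimensional space $M_3^2(\G_Z)^{KE}$, so $M_3^2(\G_Z)^{KE} = \C F_Z$; the first assertion then immediately says that this line is stable under every $T_\infty(h)$ with $h \in \Delta_p$, $p \neq 2$. The only real bookkeeping obstacle is to confirm that the operator $T_\infty(h)$, defined a priori on the larger space $M_3^2(\G_d^2(8),\d)$, actually restricts to an operator on the $\G_Z$-invariant subspace $M_3^2(\G_Z)^{KE}$. This is not hard: the coset representatives $h_j$ involved in $T_\infty(h)$ have trivial component at the prime $2$, so they commute with the $2$-adic piece of $\widetilde{\G}_Z$ that cuts out $\G_Z$ inside $\G(4,8)$, and hence $\G_Z$-invariance (and the associated character) is preserved. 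This is the only step requiring care; once checked, the corollary follows with no further computation.
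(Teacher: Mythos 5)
Your main line of argument coincides with the paper's: Theorem \ref{thm:repF_Z} gives that the local component $\Pi_p$ generated by $\wt{F}_Z$ is irreducible and unramified for every $p\neq 2$; since an irreducible unramified representation of $\GSp_2(\Q_p)$ has a unique $\GSp_2(\Z_p)$-fixed vector up to scalars, $\wt{F}_Z$ is an eigenvector of the local operators $T_p(h)$; Proposition \ref{prop:egvgl} then converts this into the eigenform property $T_\infty(h)F_Z=\lambda_p\,\d\bigl(l_2(p^{t_1},p^{t_2})\bigr)F_Z$ for the classical operators; and the one-dimensionality from Proposition \ref{dim}, together with the fact (established in Section 4 via the non-vanishing of $\Phi(F_Z|[g_0])$, not merely in Section \ref{diff}) that $F_Z$ lies in $M_3^2(\G_Z)^{KE}$, gives $M_3^2(\G_Z)^{KE}=\C F_Z$. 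Up to this point the proposal is correct and is essentially the proof in the paper.

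Your final ``bookkeeping'' paragraph, however, is both unnecessary and incorrectly argued. It is unnecessary because once $T_\infty(h)F_Z$ is a scalar multiple of $F_Z$ (possibly zero), the line $\C F_Z$ is trivially stable; no separate statement that $T_\infty(h)$ respects $\G_Z$-invariance or the attached character is needed. And the mechanism you invoke is false: the representatives $h_j$ do not have trivial component at $2$ --- adelically $e_2(h_j)$ is congruent to the diagonal element $h$ modulo the level, and it is precisely its action at the bad place that produces the character factor $\d\bigl(l_2(p^{t_1},p^{t_2})\bigr)$ in Proposition \ref{prop:egvgl}. Indeed, the Remark immediately following this corollary in the paper (quoting Lemma 3.1 of \cite{ST}) shows that $T_\infty(a_2(p))$ with $p\equiv -1\pmod 4$ sends $M_3(\G(2),\chi_Z)$ into $M_3(\G(2),\ol{\chi}_Z)$ with $\ol{\chi}_Z\neq\chi_Z$, so the character attached to $F_Z$ is genuinely not preserved in general; the conclusion survives only because the corresponding eigenvalue vanishes, i.e.\ $T_\infty(a_2(p))F_Z=0$, which still lies in $\C F_Z$. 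Deleting that paragraph (or replacing it by the one-line remark that an eigenvector spans a stable line) makes your proof agree with the paper's.
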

\begin{proof}
Since an irreducible unramified representation $\pi_p$ of $\GSp_2(\Q_p)$ has the unique $\GSp_2(\Z_p)$-fixed vector up to multiples, $\wt{F}_Z \in \Pi_p$ is a Hecke eigen form with respect to local Hecke operator $T_p(h)$.
The assertions follow from Proposition \ref{prop:egvgl} and the one-dimensionality of $M_3^2(\G_Z)^{KE}$ showed in Proposition \ref{dim}.
\end{proof}
\begin{Rem}
Let $\G \subset \Sp_n(\Z)$ and $\chi$ be a character on $\G$.
As pointed in Lemma 3.1. of \cite{ST}, in general $T_\i(h)$ does not preserve $M_\rho(\G,\chi)$ even if $\dim_\C M_\rho(\G,\chi) = 1$.
Indeed, for example $T_\i(a_2(p))$ with $p \equiv -1 \pmod{4}$ sends $M_3(\G(2),\chi_Z)$ to $M_3(\G(2),\ol{\chi}_Z)$, where $\chi_Z$ is the character obtained from $F_Z$ by the Igusa transformation formula, and $\ol{\chi}_Z (\neq \chi_Z)$ denotes a conjugation of $\chi_Z$ (see loc. cite.).
However, $T_\i(a_2(p)) F_Z$ is $0$, and still lives in $M_3(\G(2),\chi_Z)$.
Let $M_3(\G)^{SE-}$ denote the subspace of $M_3^2(\G)$ generated by Siegel modular forms $F$ such that $\Phi(F|[\g])$ is cuspidal for any $\g \in \Sp_2(\Z)$. 
Then, $M_3(\G)^{SE-}$ is the direct sum of $M_3(\G)^{KE}$ and $S_3(\G)$.
Then, since $\dim_\C M_3(\G_Z)^{SE-} = 1$ and $\G_Z \subset {\rm Ker}(\chi_Z) = {\rm Ker}(\ol{\chi}_Z)$, 
\begin{eqnarray*}
M_3(\G_Z)^{SE-} = M_3({\rm Ker}(\chi_Z))^{SE-} = M_3(\G(2),\chi_Z)^{SE-} = \C F_Z, \ M_3(\G(2),\ol{\chi}_Z)^{SE-} = 0.
\end{eqnarray*} 
\end{Rem}
\subsection{Soudry lift.}
Let $K_v = \Q_v(\sqrt{d})$ with $d \in \Q_v^\t \setminus (\Q_v^\t)^2$ or $\Q_v^2$ for a place $v$ of $\Q$. 
Let $c$ be the generator of ${\rm Gal}(K_v/\Q_v)$.  
In case $K_v=\Q_v^2$ (resp. $K_v= \Q_v(\sqrt{d})$), we define a quadratic form by $((x,y),(x',y')) = xy' + x'y$ for $(x,y),(x',y') \in \Q_v^2$ (resp. $(z,z') = \frac{1}{2}(zz'^c + z^cz')$ for $z,z' \in \Q_v(\sqrt{d})$).
Let $\GO_{K_v}(\Q_v)$ denote the generalized orhogonal group of $K_v$, and $\nu$ the similitude character of $g \in \GO_{K_v}(\Q_v)$.
Let $\GSO_{K_v} = {\rm Ker}(\det/\nu)$.
Then, $\GSO_{K_v}(\Q_v)$ is isomorphic to $K_v^\t$, and 
\begin{eqnarray*}
\GO_{K_v}(\Q_v) \simeq K_v^\t \rtimes \Z/2\Z. 
\end{eqnarray*} 
In case of $K_v = \Q_v(\sqrt{d})$, $\Z/2\Z = \{ 1 ,c \} \simeq {\rm Gal}(K_v/\Q_v)$ acts on $K_v$, and in case of $K_v = \Q_v^2$, $\{ 1 ,c \}$ acts on $\Q_v^2$ by the permutation.
The law of group is defined by $(h,c^n)(h',c^m) =(hh'^{c^n},c^{n+m})$.

Let $\s$ be an automorphic, unitary, quasi-character on $K_\A^\t$.
Let $\In (\s_v)$ denote the induced ($2$-dimensional) representation of $\GO_K(\Q_v)$ from $\s_v$ of $\GSO_K(\Q_v)$.
If $\s_v \neq \s_v^c$, then $\In (\s_v)$ is irreducible. 
If $\s_v = \s_v^c$, then $\In(\s_v) = \s_v^+ \oplus \s_v^-$ with $\s_v^{\pm}$ irreducible, where $\s_v^+(h,c^n) = \s_v(h), \s_v^-(h,c^n) = (-1)^n\s_v(h)$.
Let $S$ be the set of places $v$ at which $\s_v$ is ramified, and
\begin{eqnarray*}
R = \{v \mid \s_v^c = \s_v\}.
\end{eqnarray*}
Although $|S|$ is finite, $|R|$ may be infinite.
Each automorphic irreducible constituent $\wh{\s}$ of the induced representation from $\s$ is in a shape of 
\begin{eqnarray*}
\l(\ot_{v \in R_+} \s_v^+ \r) \ot\l(\ot_{v \in R_-} \s_v^- \r) \ot \l(\ot_{v \not\in R} \In(\s_v)\r)
\end{eqnarray*}
where $R = R_+ \sqcup R_-$ and $|R_-| < \infty$.
If $p \in S \cup R_-$, then $\wh{\s}_p$ is ramified.  
There is a vector $f_0 = \ot_v f_{0v} \in \wh{\s}$ such that 
\begin{eqnarray}
f_{0v}(h,c^l) =
\begin{cases}
(-1)^l \s_v(h) & \mbox{ at $v \in R_-$,} \\
\s_v(h) & \mbox{otherwise.}
\end{cases} \label{eqn:newvec}
\end{eqnarray}
Let $w_v^n$ denote the Weil representation of $\Sp_n \t \O_K$.
For $\vp = \ot_v \vp_v \in \Sc(K_\A^n)$, we define the $\th$-kernel associated to $\vp$ by
\begin{eqnarray*}
\th_n(\vp)(g,u) = \sum_{z \in K^n} \ot_v w_v^n(g,u) \vp_v(z).
\end{eqnarray*}
Further, for $f = \ot_v f_v \in \wh{\s}$, we define 
\begin{eqnarray}
\th_n(\vp,f)(g) = \int_{\O_K(\Q) \bs \O_K(\A)} \th_n(\vp)(g,u) f(u) du, \label{eqn:th-lift}
\end{eqnarray}
which is an automorphic form on $\Sp_n(\A)$.
The standard extension associated $\th_n(\vp,f)$ naturally is also denoted by $\th_n(\vp,f)$.  
We denote by $\th_n(\wh{\s})$ the subspace of automorphic forms on $\GSp_n(\A)$ spanned by them.
The central character of $\th_n(\wh{\s})$ is $\s|_{\A^\t}$.
\begin{Def}[Soudy lift]
We call $\th_2(\wh{\s})$ a Soudry lift of $\s$.
\end{Def}
\begin{Rem}
$\th_2(\wh{\s})$ is cuspidal, if and only if $R_- \neq \emptyset$ (cf. Lemma 1.3 of \cite{Soudry}).
\end{Rem}
By a computation of Hecke operators as in \cite{Y}, one can find that
\begin{eqnarray}
L(s,\th_2(\wh{\s})_v;{\rm spin}) = L(s,\s_v)L(s-1,\s_v) \label{eqn:LSoudrylift}
\end{eqnarray}   
for $v \not\in S$.
Let us observe Bessel functions of $F \in \th_2(\wh{\s})$.
Let $\vp = \ot_v \vp_v \in \Sc(K_\A^2)$, $f = \ot_v f_v$ of an irreducible automorphic representation $\wh{\s}$ of $\GO_K(\A)$, and $F = \th_2(\vp,f)$.
First of all, $F_T$ is given by 
\begin{eqnarray*}
F_T(g) = \int_{\O_K(\Q) \bs \O_K(\A)} \sum_{\g \in \O_K(\Q)} w(g,u) \vp(\g^{-1}z_1,\g^{-1}z_2) f(u) du 
\end{eqnarray*}
if $T = \begin{bmatrix}
(z_1,z_1) & (z_1,z_2) \\
(z_1,z_2) & (z_2,z_2)
\end{bmatrix}$.
Therefore $-\det(T) \in N_{K/\Q} (K)$. \\
{\bf A)} Suppose that $K = \Q(\sqrt{-d_K})$ with $-d_K \in \Q^\t \setminus (\Q^\t)^2$.
Put $z_0 = \sqrt{-d_K}$, and $T = \begin{bmatrix}
1 & 0\\
0 & d_K
\end{bmatrix}$.
Then,  
\begin{eqnarray}
F_T(g) &=& \int_{\O_K(\Q) \bs \O_K(\A)} \sum_{\g \in \O_K(\Q)} w(g,\g u) \vp(1,z_0) f(u) du \nonumber \\
&=& \int_{\O_K(\A)} w(g,u) \vp(1,z_0) f(u) du = \bigotimes_v F_{T,v}(g_v), \label{eqn:Fourierdec}\\
F_{T,v}(g_v) &=& \int_{\O_K(\Q_v)} w_v(g_v,u_v) \vp_v(1,z_0) f_v(u_v) du_v \label{eqn:locFourier}
\end{eqnarray}
where we normalize the Haar measure $d u_v$ so that ${\rm vol}(\O_K(\Q) \bs \O_K(\A))=1$.
Let $\xi$ be an automorphic character on $K^1 \bs K_\A^1$.
The Bessel function associated to $\xi$ is calculated as
\begin{eqnarray}
F_T^\xi (g) &=& \int_{K^1 \bs K_\A^1}\xi(k)^{-1} \int_{\O_K(\A)} w\l(\begin{bmatrix}
k & 0_2\\
0_2 & {}^tk^{-1}  
\end{bmatrix}g,1\r) \vp(u^{-1},u^{-1}z_0) f(u) du d k  \nonumber \\
&=& 
\int_{K^1 \bs K_\A^1} \xi(k)^{-1} \int_{\O_K(\A)} w(g,1)\vp\l(\l(u^{-1},u^{-1}z_0\r)k\r) f(u) du d k \nonumber \\
&=& 
\int_{K^1 \bs K_\A^1} \xi(k)^{-1} \int_{\O_K(\A)} w(g,1)\vp\l(u^{-1}k,u^{-1}kz_0\r) f(u) du d k \label{eqn:besk}
\end{eqnarray}
where we normalize the Haar measure $d k$ so that ${\rm vol}(K^1 \bs K_\A^1)=1$.
From this equality it follows that $F_T^\xi$ vanishes if $\xi \neq (\s|_{K_\A^1})^{-1}$.
Thus, we have the equality
\[
F_T = F_T^{\s^{-1}}.
\]
{\bf B)}
Suppose that $K = \Q^2$.
In this case, $K^\t = (\Q^\t)^2, K^1 \simeq \Q^\t$, and $\s$ factors as $\s(a,b) = \s_1(a) \s_2(b)$ for characters $\s_1,\s_2$ of $\Q_v^\t$.
Let $\{e_+,e_{-}\}$ be the standard basis of $K$: $(e_+,e_-) = 1, (e_+,e_+) = (e_-,e_-) = 0$.
Put $T = \begin{bmatrix}
 0& 1 \\
1 &0 
\end{bmatrix}$, and replace the above $(1,z_0) $ with $(e_+,e_-)$.
Then, $\SO_T(\Q_v) = \Bigg\{\begin{bmatrix}
a & 0 \\
0 & a^{-1}
\end{bmatrix}\ \Bigg|\ a \in \Q_p^\t \mbox{($\R^\t$ if $v = \infty$)}\Bigg\}$, and denote by $\xi$ the character sending $\begin{bmatrix}
a & 0 \\
0 & a^{-1}
\end{bmatrix}$ to $\xi(a)$.
Then, we have
\begin{eqnarray}
F_T(g) &=& \int_{\Q^\t \bs \A^\t} \xi(a)^{-1} \int_{\Q^\t \bs \A^\t} w(g,1)\vp\l(a k^{-1}e_+, a^{-1}k e_-\r) \s_1\s_2^{-1}(a) d a d k \label{eqn:besk'} \\
& & + \int_{\Q^\t \bs \A^\t} \xi(a)^{-1} \int_{\Q^\t \bs \A^\t} w(g,1)\vp\l(a k^{-1}e_-, a^{-1}k e_+\r) \s_1\s_2^{-1}(a) d a d k. \nonumber
\end{eqnarray}
and therefore $F_T = F_T^{\s_1^{-1}\s_2}$.
Now, consider the case of $f= f_0$ as in (\ref{eqn:newvec}).
Let $\oo_v$ denote the ring of integers of $K_v$.
Let $\K$ denote the maximal compact subgroup of $\GO_K(\A)$ such that, via the isomorphism $\SO_K(\Q_v) \simeq K_v^1$, it holds
\begin{eqnarray*}
\K_v \cap \SO_K(\Q_v) \simeq 
\begin{cases}
\C^1 & \mbox{if $v = \infty$ and $K_v \simeq \C$,} \\
\{(1,1)\} \in (\R^\t)^2 & \mbox{if $v = \infty$ and $K_v \simeq \R^2$,} \\
\oo_v^1 & \mbox{if $v < \infty$ and $K_v/\Q_v$ does not split,} \\
\{(a,a^{-1}) \in (\Z_v^\t)^2 \} (\simeq \Z^\t)& \mbox{if $v < \infty$ and $K_v \simeq \Q_v^2$,} 
\end{cases}
\end{eqnarray*}
and therefore we will identify them. 

For future use and simplicity, we introduce the following notion. 
The reader should not be confused with the similar notion in ``fundamental lemma" for orbital integrals.  
\begin{Def}({\bf Matching} of Schwartz-Bruhat functions) 
Let $n$ be a positive integer.
If $\vp_v \in \Sc(K_v^n)$ satisfies the following condition, we say that $\vp_v$ matches to $\hat{\s}_v$.
For any $h \in \K_v$,
\begin{eqnarray}
w_v(1,(h,c^l))\vp_{v} = \vp_{v} \t
\begin{cases}
(-1)^l \s_v(h)^{-1} & \mbox{if $v \in R_-$,} \\
\s_v(h)^{-1} & \mbox{otherwise.}
\end{cases} \label{eqn:propvp}
\end{eqnarray}
\end{Def}
If $\vp_v$ matches to $\hat{\s}_v$, then we have
\begin{eqnarray}
\int_{\O_K(\Q_v)} w_v(g,u)\vp_v(z_1,z_2) f_0(u) du 
&=& 2\int_{\SO_K(\Q_v)} w_v(g,u)\vp_v(z_1,z_2) \s(u) d u \nonumber \\
&=& 2\int_{K_v^1/\K_v \cap \SO_K(\Q_v)} w_v(g, \dot{u}) \vp_v(z_1,z_2) \s(\dot{u}) d \dot{u}, \label{eqn:intbessel}
\end{eqnarray} 
where $\dot{u}$ the image of $u \in K_v^1$ by the projection $K_v^1 \to K_v^1/\K_v \cap \SO_K(\Q_v)$. 
We are going to give such a $\vp_v$.\\
{\bf i)}
In case that $v = \infty$ and $K_v \simeq \C$, suppose that $\s_\i(z) = (z/|z|)^\ep$ with $\ep \in \Z$.
Then, letting $X,Y$ indeterminants, 
\begin{eqnarray*}
\vp_\i(z_1,z_2) = P_\ep(Xz_1+Yz_2)\exp\l(-2\pi (|z_1|^2 + |z_2|^2)\r) \t 
\begin{cases}
{\rm Im}(z_1z_2^c) & \mbox{if $\i \in R_-$,} \\
1 & \mbox{if $\i \not\in R_-$,}
\end{cases}
\end{eqnarray*}
where 
\begin{eqnarray*}
P_\ep(z) = 
\begin{cases}
(\ol{z})^\ep & \mbox{if $\ep \ge 0$,} \\
z^{-\ep} & \mbox{if $\ep <0$.}
\end{cases}
\end{eqnarray*}
Only in case of $\ep = 0$, $R_-$ may contain $\i$ (cf. \cite{KV}), and the automorphic form given by the $\th$-lift (\ref{eqn:th-lift}) for an arbitrary $f$ and $\vp$ with $\vp_\i$ as above is a holomorphic Siegel modular form of weight $(1,1)$ if $\i \not\in R_-$(resp. $(2,2)$ if $\i \in R_-$) since $K_\i$ is a positive definite quadratic space.
In case of $\ep \in \Z_{>0}$ (resp. $\ep \in \Z_{<0}$), the automorphic form given by (\ref{eqn:th-lift}) has weight $(\ep+1,1)$ (resp. $(\ep-1,-1)$), and is holomorphic (resp. anti-holomorphic) Siegel modular form. 
In particular, only in cases of $\ep = \pm 2$ the $\th$-lifts (\ref{eqn:th-lift}) may contribute to $H^2(\G \bs \H_2,\C)$ for a sufficiently small subgroup $\G$ (cf. p. 489 of \cite{o-s}). \\
{\bf ii)}
In case that $v = \infty$ and $K_v \simeq \R^2$, suppose that $\s_\i(x,y) = (x y)^{\ep}$ with $\ep \in \Z$.
Then, writing $z_i = (x_i, y_i)$, 
\begin{eqnarray*}
\vp_\i(z_1,z_2) = Q_\ep(Xz_1+Yz_2)\exp\l(-2\pi (|x_1|^2 + |x_2|^2 + |y_1|^2+ |y_2|^2)\r) \t 
\begin{cases}
x_1y_2 +x_2y_1 & \mbox{if $\i \in R_-$,} \\
1 & \mbox{if $\i \not\in R_-$,}
\end{cases}
\end{eqnarray*}
where 
\begin{eqnarray*}
Q_\ep(z) = \exp(2\pi(x^2+y^2)) \t
\begin{cases}
\int_{-\i}^\i \exp(2\pi \sqrt{-1}yy')\exp(-2\pi(x^2+y'^2))(x+\sqrt{-1} y')^\ep dy' & \mbox{if $\ep \ge 0$,} \\
\int_{-\i}^\i \exp(2\pi \sqrt{-1}yy')\exp(-2\pi(x^2+y'^2))(x-\sqrt{-1} y')^{-\ep} dy' & \mbox{if $\ep <0$.}
\end{cases}
\end{eqnarray*}
{\bf iii)}
In case that $v = p$ and $K_p$ does not split, 
\begin{eqnarray*}
\vp_p(z_1,z_2) = 
\begin{cases}
\Ch(\oo_p;z_1)\Ch(\oo_p;z_2) & \mbox{if $p \not\in S \cup R_-$,}\\
\s(z_1)\Ch(\oo_p^\t;z_1)\Ch(\oo_p;z_2) & \mbox{if $p \in S \setminus R_-$,}\\
\phi_p(z_1z_2^c)\Ch(\oo_p^\t;z_1)\s_p(z_1) & \mbox{if $p \in R_-$,}
\end{cases}
\end{eqnarray*}
where $\Ch$ indicates the characteristic function.
Here $\phi_p^- \in \Sc(K_p)$ is defined so that $\phi_p^-(z^c) = -\phi_p^-(z)$ and ${\rm supp}(\phi_p^-) = \oo_p^\t$ (this is possible). \\
{\bf iv)}
In case that $v = p$ and $K_p \simeq \Q_p^2$, writing $z_i = (x_i,y_i)$,
\begin{eqnarray*}
\vp_p(z_1,z_2) = 
\begin{cases}
\Ch(\Z_p^2;z_1)\Ch(\Z_p^2;z_2) & \mbox{if $p \not\in S \cup R_-$,}\\
\s(z_1)\Ch((\Z_p^\t)^2;z_1)\Ch(\Z_p^2;z_2) & \mbox{if $p \in S \setminus R_-$,}\\
\phi_p(x_1y_2 + x_2y_1)\Ch((\Z_p^\t)^2;z_1)\s_p(z_1) & \mbox{if $p \in R_-$,}
\end{cases}
\end{eqnarray*}
where $\phi_p^- \in \Sc(\Q_p)$ is defined so that $\phi_p^-(-x) = -\phi_p^-(x)$ and ${\rm supp}(\phi_p^-) = \Z_p^\t$.
Under the above conditions, it is easy to check that (\ref{eqn:intbessel}) does not vanish for $g = 1, (z_1,z_2) = (1,z_0)$, and therefore $F_{T_v}(1) \neq 0$ by (\ref{eqn:locFourier}).
Thus $F_T(1) \neq 0$ by (\ref{eqn:Fourierdec}).
Summarizing, 
\begin{thm}
Let $K = \Q(\sqrt{-d})$ $($resp. $\Q^2)$.
Let $\s$ be an automorphic character of $K_\A^\t$, and $\wh{\s}$ an irreducible automorphic constituent of $\In(\s)$.
Then, for $f_0 \in \wh{\s}$ and $\vp \in \Sc(K_\A^2)$ as above, $F= \th_2(\vp,f_0)$ has a nontrivial Bessel function $F_T^{\s^{-1}}$ which coincides with $F_T$ for $T = \dg(1,d)$ $($resp. $\begin{bmatrix}
0 & 1\\
1 & 0
\end{bmatrix})$.
\end{thm}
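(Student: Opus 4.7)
My plan is to follow the unfolding calculation already laid out in the discussion preceding the theorem, and verify non-vanishing place by place using the explicit Schwartz-Bruhat functions $\vp_v$ constructed above.

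First I would prove the identity $F_T=F_T^{\s^{-1}}$. In case \textbf{A)} with $K=\Q(\sqrt{-d})$ and $T=\dg(1,d)$, the unfolding (\ref{eqn:Fourierdec})--(\ref{eqn:locFourier}) gives an Eulerian decomposition $F_T=\bigotimes_v F_{T,v}$, and the Bessel integral (\ref{eqn:besk}) against an automorphic character $\xi$ of $K^1_\A$ can be rearranged using the Weil representation relation (\ref{eqn:weilprop2}) to pull the action of $k\in K^1_\A$ inside $\vp$ against the variable, leaving an integral of $\xi^{-1}\s$ over $K^1\bs K^1_\A$; this integral vanishes unless $\xi=\s^{-1}|_{K^1_\A}$, and matches $F_T$ when $\xi=\s^{-1}$. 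Case \textbf{B)}, with $K=\Q^2$ and $T=\bigl[\begin{smallmatrix}0&1\\1&0\end{smallmatrix}\bigr]$, is identical after replacing $(1,z_0)$ by $(e_+,e_-)$ and using (\ref{eqn:besk'}).

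Next I would show $F_T(1)\neq 0$. By (\ref{eqn:Fourierdec}), it suffices to check $F_{T,v}(1)\neq 0$ at every $v$. Because $\vp_v$ matches $\wh\s_v$ in the sense of (\ref{eqn:propvp}), the integral (\ref{eqn:locFourier}) against $f_{0,v}$ collapses by (\ref{eqn:intbessel}) to an integral of a single orbital factor over $K_v^1/(\K_v\cap\SO_K(\Q_v))$. I would treat this place by place:
\begin{itemize}
\item At the archimedean place, using the choices $\vp_\i$ of type i) or ii), the polynomial factor $P_\ep$ (resp.\ $Q_\ep$) together with the Gaussian makes the integrand strictly positive in absolute value on a set of positive measure, and the $\s_\i$-equivariance packaged into $\vp_\i$ matches the integration against $\s_\i$, giving non-cancellation.
\item At $p\notin S\cup R_-$, $\vp_p$ is the characteristic function of $\oo_p^2$ (resp.\ $(\Z_p^2)^2$) and the integral reduces to $\vol(\K_p\cap\SO_K(\Q_p))\neq 0$.
\item At $p\in S\setminus R_-$, the factor $\s(z_1)\Ch(\oo_p^\t;z_1)$ exactly cancels the action of $\s_p$ via (\ref{eqn:weilprop2}) so the integrand is constantly $1$ on the support.
\item At $p\in R_-$, the antisymmetric factor $\phi_p^-$ is built so that its pairing with $f_{0,p}$ (which picks up the sign $(-1)^l$) is non-zero; here one needs to verify that $\phi_p^-$ restricted to $\oo_p^\t$ (resp.\ $\Z_p^\t$) is not identically zero, which is built into its definition.
\end{itemize}

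The main obstacle is the archimedean computation when $\ep\neq 0$, since one has to confirm that the projection of the polynomial $P_\ep(Xz_1+Yz_2)$ (or $Q_\ep$) against the character $\s_\i(z/|z|)^\ep$ (resp.\ $(xy)^\ep$) over the maximal compact $K_\i^1/\K_\i$ is non-zero. This is essentially a classical statement about the pairing of spherical harmonics, and is exactly the source of the minimal $K$-type $(3,1)$ that is needed for Theorem \ref{main2}(iii)(a); once this non-vanishing is confirmed, the Eulerian product is non-zero and one concludes $F_T\neq 0$, so $F_T^{\s^{-1}}=F_T$ is the desired nontrivial Bessel function.
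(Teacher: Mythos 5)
Your argument is essentially the paper's own: the identity $F_T=F_T^{\s^{-1}}$ is obtained from the unfolding (\ref{eqn:Fourierdec})--(\ref{eqn:besk'}), and non-triviality is reduced, via the matching condition (\ref{eqn:propvp}) and (\ref{eqn:intbessel}), to checking that the local integrals do not vanish at $g=1$, $(z_1,z_2)=(1,z_0)$ for the explicit $\vp_v$ of i)--iv), which is exactly the verification the paper carries out. The only small slip is citing (\ref{eqn:weilprop2}) (the action of the Levi of the Siegel parabolic) for the cancellation at $p\in S\setminus R_-$, which actually comes from the orthogonal-group action (\ref{eqn:weilprop1}); this does not affect the argument.
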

\begin{Rem}\label{rem:nongen}
Let $V$ be a quadratic space over $\Q_v$.
For a local $\th$-lift from $\O_V(\Q_v)$ to $\Sp_n(\Q_v)$, in order to have a Whittaker model, $\dim V \ge 2n$ and $\mathcal{H}^{(n-1)} \subset V$ are needed, where $\mathcal{H}$ indicates the hyperbolic plane.
In particular, for any $\s$, the local component $\th_2(\wh{\s})_v$ of the above theta lift from $\O(2)$ to $\Sp(2)$ does not have a Whittaker model, i.e., non-generic.
According to Roberts, Schmidt \cite{R-S}, every irreducible, admissible, generic representation $\pi_v$ of $\PGSp_2(\Q_v)$ for $v < \infty$ has a $K(N)$-fixed vector for a minimal $N \in \Z_{>0}$ which is called new form of $\pi$, where $K(N)$ is the paramodular group determined by the functional equation of $L(s,\pi_v;{\rm spin})$.
But, it is possible to show that $\th_2(\wh{\s})_v$ has no such a vector.
So, it is also a problem to determine a new form of such a representation.
\end{Rem}
\begin{Rem}
Let $p$ be a prime, $K = \Q(\sqrt{-p})$, and $V(p^n) = \G^2(p^n) \bs \H_2$.
Suppose $p \equiv 3 \pmod{4}$.
Theorem 6.7 of \cite{o-s} implies that, for every gr\"o{\ss}en-character $\s$ on $K_\A^\t$, each $\th$-lift $\th_2(\wh{\s})$ which contributes to the $(2,0)$-part of $H^2(V(p),\C)$ (see p. 505 of loc.cit. for the decomposition of $H^2(V(p),\C)$) is not cuspidal.
Note that $\th_2(\wh{\s})$ is holomorphic or anti-holomorphic, and hence cannot contribute to $(1,1)$-part.
This non-cuspidality is also explained as follows.
Suppose that $\th_2(\wh{\s})$ contributes to the $(2,0)$-part of $H^2(V(p),\C)$.
Then, from the explanation of the Soudry lift, it follows that $\s_\i(z) = (z/|z|)^2$, $S \subset \{p\}$, and that $R_- \subset \{ p\}$.
But, noting $K_p/\Q_p$ is ramified, we find that  every $\vp_p \in \Sc(K_p^2)$ matching to $\s_p^-$ is not $\G^2(p)$-fixed, and therefore the local theta lift $\th_2(\s_p^-)$ has no $\G^2(p)$-fixed vector.
Hence, $R_- = \emptyset$, and $\th_2(\wh{\s})$ is not cuspidal.
However, each cuspidal $\th_2(\wh{\s})$ with $\s_\i(z) = (z/|z|)^2, \i \in R_+$ may contribute to the $(2,0)$-part of $H^2(V(p^n),\C)$ for a sufficiently large $n$.
In case of $p \equiv 1, 2 \pmod{4}$, since $K_2/\Q_2$ is ramified, any $\th_2(\wh{\s})$ has no $\GSp_2(\Z_2)$-fixed vector, and thus cannot contribute to $H^2(V(p^n),\C)$. 
\end{Rem}
\subsection{The explicit differential $2$-form on $Z$.}
Let $\s = \mu^2$.
Take the irreducible constituent $\wh{\s}$ of $\ot_v \In(\s_v)$ such that $R_- = \emptyset$.
Then, $\th_2(\wh{\s})$ is not cuspidal.
As explained before, the product 
\[
\frac{\sqrt{-1}}{2}\th_{(1,0,1,0)}\th_{(1,0,1,1)}
\]
of odd theta constants defines a Schwartz-Bruhat function $\vp_2 \in \Sc(K_2^{2})$ with $K = \Q(\sqrt{-1}) \simeq V_2$.
Set $\vp = \vp_2 \ot \bigotimes_{v \neq 2} \vp_v$ with $\vp_v$ as above. 
By Remark \ref{Rem:defG_Z}, the $\th$-kernel $\th_2(\vp)$ is $\G_Z$-fixed.
Identifying $V_2$ with $K$ via an isomorphism, we can describe $\vp_2$ as
\begin{eqnarray*}
\vp_2(z_1,z_2) = \frac{\sqrt{-1}}{2}(-1)^{x_1+y_1+x_2} \Ch(z_1; \frac{1+\sqrt{-1}}{2} + \oo_2) \Ch(z_2;\oo_2),
\end{eqnarray*}
where we write 
\[
z_1 = (\ha +x_1)+ (\ha + y_1)\sqrt{-1}, z_2 = x_2 + y_2 \sqrt{-1} \in K_2.
\]
Put $E_Z = \th_2(\vp, f_0)$.
In order to see the non-triviality of $E_Z$ (non-cuspidal if nontrivial), we observe $\Phi(E_Z) = \th_1({\rm Pr}(\vp), f_0)$, where  
\begin{eqnarray*}
{\rm Pr}(\vp)_v(z) = {\rm Pr}(\vp_v)(z) = \vp_v(z,0).
\end{eqnarray*}
Then, ${\rm Pr}(\vp_v) \in \Sc(K_v)$ satisfies the condition (\ref{eqn:propvp}).
Therefore, one can find that the classical form corresponding to $\th_1({\rm Pr}(\vp), f_0)$ is given by 
\begin{eqnarray*}
\frac{\sqrt{-1}}{2} \sum_{x +y \sqrt{-1} \in (\frac{1+\sqrt{-1}}{2} + \oo_K)} (\sqrt{-1}x + y)^2 (-1)^{\frac{(x+y)}{2}}\exp\l(\pi \sqrt{-1}(x^2+y^2) \tau_1\r).
\end{eqnarray*}
Then this coincides with $g(\frac{\tau_1}{4})$ giving the non-triviality of $E_Z$  and the claim (iii)-(b) of Theorem \ref{main2} simultaneously.
This non-cuspidal form $E_Z$ is a $\C^3$-valued function 
$
E_Z(\tau) = (h_0(\tau),h_1(\tau),h_2(\tau))
$ of weight ${\rm det}^1\otimes {\rm Sym}^2$ (of $U(2)(\C)$) in the classical sense, where 
\begin{eqnarray*}
h_i(\tau) = \sum_{z_j \in K} \l(\bigotimes_{v < \i}\vp_v(z_1, z_2)\r) \ol{z}_1^{2-i} \ol{z}_2^i \exp(2\pi \sqrt{-1} {\rm tr}\l(\tau \begin{bmatrix}
N(z_1) & {\rm Re}(z_1z_2)/2 \\
{\rm Re}(z_1z_2)/2 & N(z_2)
\end{bmatrix}\r)). 
\end{eqnarray*}
Then, the differential $2$-form $E_Z^\sharp$ on $Z$ is given by
\begin{eqnarray*}
E_Z^\sharp (\tau) = h_0(\tau) d \tau_1 \wedge \tau_2 +  h_1(\tau) d \tau_1 \wedge \tau_3 + 
h_2(\tau) d \tau_3 \wedge \tau_2, \ \ (\tau = \begin{bmatrix}
\tau_1 & \tau_2 \\
\tau_2 & \tau_3
\end{bmatrix}).
\end{eqnarray*}
By (\ref{eqn:LSoudrylift}) and the same argument as in Theorem \ref{thm:repF_Z}, one can determine each local component of the non-cuspidal representation and show that $\th_2(\wh{\s})$ is irreducible. 
Summing up, 
\begin{thm}
With notations as above, the (non-cuspidal) automorphic representation $\th_2(\wh{\s}) = \ot_v \th_2(\wh{\s})_v$ is irreducible, and
\begin{eqnarray*}
\th_2(\wh{\s})_{v} = 
\begin{cases}
w_5 (\neq \Pi_\i)  & v = \infty, \\
\Pi_ v \ot |*|_v & v \neq \infty
\end{cases}
\end{eqnarray*}
(see p. 489 of \cite{o-s} for the definition of $w_5$).
The $\C^{3}$-valued holomorphic Klingen type Eisenstein series $E_Z \in \th_2(\wh{\s})$ defines the unique (up to scalar multiples) differential $2$-form $E_Z^\sharp$ on the Siegel threefold $Z$.
\end{thm}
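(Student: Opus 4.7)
The plan is to follow the blueprint of Theorem \ref{thm:repF_Z} step by step, but with the spinor $L$-factor now provided by (\ref{eqn:LSoudrylift}) instead of the Zharkovskaya-type relation of Proposition \ref{prop:Zhar}. First I would use (\ref{eqn:LSoudrylift}), which gives
\[
L(s,\th_2(\wh{\s})_v;{\rm spin})=L(s,\s_v)L(s-1,\s_v)=L(s,\mu^2_v)L(s-1,\mu^2_v)
\]
for every $v\notin S$, so that at odd $p$ this local factor equals $L(s-1,\Pi_p;{\rm spin})$ by (\ref{eqn:LFZ}). Since $\th_2(\wh{\s})_p$ is unramified and non-generic at each odd $p$ (cf. Remark \ref{rem:nongen}), its isomorphism class is pinned down by its spinor $L$-factor together with its central character; consulting Table A.8 of \cite{R-S} then forces $\th_2(\wh{\s})_p\simeq L(|*|_p\chi_{-1,p},\chi_{-1,p}\xi_p^{-2}\rtimes|*|_p\xi_p)\otimes|*|_p=\Pi_p\otimes|*|_p$ for $p\neq 2$.

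Next, at the ramified finite place $v=2$, the above $L$-function criterion is not sufficient. Here I would mimic the second half of the proof of Theorem \ref{thm:repF_Z}: use the semi-simplified $N_Q$-Jacquet module, whose constituents are tabulated in Table A.4 of \cite{R-S}, and compare with the projection of $\th_2(\wh{\s})_2$ onto $M_Q$, which by (\ref{eqn:th-lift}) and the computation of $\Phi(E_Z)$ just carried out contains the supercuspidal $\pi(\mu^2)_2$ twisted by $|*|_2$. Since the central character of $\th_2(\wh{\s})$ equals $\s|_{\A^\t}=\chi_{-1}|*|^3$, the only Langlands quotient in Table A.4 that fits is $L(|*|_2\chi_{-1,2},\pi(\mu^2)_2)\otimes|*|_2=\Pi_2\otimes|*|_2$.

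For the Archimedean place, I would exploit the explicit Schwartz function $\vp_\i$ given in case i) with $\ep=2$ and $K_\i\simeq\C$. The construction on p.\,489 of \cite{o-s} identifies precisely the discrete series $w_5$ whose minimal $U(2)$-type is $\det^1\otimes\mathrm{Sym}^2$; since the classical form $E_Z$ has been written down as a $\C^3$-valued function of weight $\det^1\otimes\mathrm{Sym}^2$, its Archimedean component must be $w_5$, and it is different from the holomorphic discrete series $\Pi_\i$ of minimal $K$-type $(3,3)$ appearing in Theorem \ref{thm:repF_Z}. Irreducibility of $\th_2(\wh{\s})$ itself is proved verbatim as in Theorem \ref{thm:repF_Z}: any $f\in\th_2(\wh{\s})$ satisfies $\Phi(f)\neq 0$, because its $T=\dg(1,1)$-Fourier coefficient is non-zero by the matching conditions (\ref{eqn:intbessel})--iv), so the map $f\mapsto f_{T_1}^{\psi_0}=\otimes_v f_{T_1,v}^{\psi_0}$ is an injection of $\th_2(\wh{\s})$ into $\otimes_v\th_2(\wh{\s})_v$; each local factor is irreducible by the previous steps, hence so is the restricted tensor product.

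Finally, for the statement about the differential $2$-form, I would note that by Proposition \ref{W} and Proposition \ref{dim} we have $\dim_\C H^{2,0}(Z^\circ,\C)=\dim_\C M^2_{(3,1)}(\G_Z)=1$, while $\G_Z$-invariance of the theta kernel $\th_2(\vp)$ (guaranteed by the matching conditions at every finite place together with Remark \ref{Rem:defG_Z}) shows that $E_Z\in M^2_{(3,1)}(\G_Z)$. Combined with the non-triviality of $E_Z$ deduced just above from $\Phi(E_Z)(\tau_1)=g(\tau_1/4)\neq 0$, this gives uniqueness up to scalars. The main obstacles I anticipate are the bookkeeping at $v=2$---correctly matching the supercuspidal $\pi(\mu^2)_2$ appearing in $\Pi_2\otimes|*|_2$ with the Jacquet module of the Soudry lift under the twist by $|*|_2$---and pinning down the Archimedean component as $w_5$ rather than another irreducible constituent with the same minimal $K$-type, which requires the explicit analysis of the Schwartz function $\vp_\i$ with $\ep=2$.
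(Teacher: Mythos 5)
Your proposal follows essentially the same route as the paper, which disposes of this theorem in one line ("by (\ref{eqn:LSoudrylift}) and the same argument as in Theorem \ref{thm:repF_Z}"): Table A.8 of Roberts--Schmidt at the unramified places, the Jacquet-module/Table A.4 comparison at $p=2$, the weight-$(3,1)$ archimedean datum coming from $\vp_\i$ with $\ep=2$ (p. 489 of Oda--Schwermer) to get $w_5$, injectivity of a decomposable Fourier--Bessel functional for irreducibility, and Proposition \ref{dim} together with $\Phi(E_Z)(\tau_1)=g(\tau_1/4)\neq 0$ and the $\G_Z$-invariance of the theta kernel for the uniqueness of $E_Z^\sharp$. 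One minor caution: non-vanishing of $\Phi(f)$ is controlled by the rank-$\le 1$ Fourier coefficients, not by the rank-two coefficient at $T=\dg(1,1)$, but this slip does not alter the structure of the argument, which matches the paper's.
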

\begin{Rem}
From the non-genericity of $\th_2(\wh{\s})_v$ (cf. Remark \ref{rem:nongen}), one can also find that $\th_2(\wh{\s})_v = \Pi_v \ot |*|_v$ for nonarchimedean $v$ consulting the table A.1. of \cite{R-S}.
\end{Rem}
\begin{Rem}
It is natural to hope that some period of $\wt{E_Z}$ is equal to $L(s,Z^{\circ}) = L(s,\th_2(\wh{\s}))$ since $E_Z^\sharp$ is a unique differential form on $Z$. 
It is possible to show that 
\begin{eqnarray*}
Z(s, \wt{E}_Z) := \int_{\Q^\t \bs \A^\t} \wt{E}_Z(a_2(t)) |t|^{s -3/2} dt
\end{eqnarray*}
coincides with $L(s,\th_2(\wh{\s}))$ up to a scalar multiple.
\end{Rem}
\begin{Rem}Let $\G$ be a congruence subgroup of $\Sp_2(\Z)$ (may have torsion elements) and $S_\G=\G\backslash\mathbb{H}_2$ the corresponding Siegel 3-fold. 
Then one can prove that 
any Hecke eigen form $F$ in $M_{(3,1)}(\G)$ is either CAP form associated to Klingen or Klingen 
Eisenstein series as follows. As in \cite{o-s} one can prove that $H^2(S_\G,\C)$ is pure of weight 2. 
By the classification  of automorphic representation of $\GSp_2(\A)$ 
contributing to $M_{(3,1)}(\G)\simeq H^{2,0}(S_\G,\C)$ due to Weissauer \cite{wei1},\cite{wei2}, 
one can conclude the claim. We denote by  $M_3(\G)^{{\rm KE,CM}}$ the space generated by Klingen Eisenstein series whose 
image under $\Phi$ is a CM modular form. 
Then it seems to be interesting to discuss whether $M_{(3,1)}(\G)^{{\rm Eisen}}$ is naturally isomorphic to $M_3(\G)^{{\rm KE,CM}}$ 
as a Hecke module or not.  
\end{Rem}


\end{document}